\newtheoremstyle{mythmstyle}
  {\topsep}
  {\topsep}
  {\itshape}
  {0pt}
  {}
  {\bfseries{.}}
  {5pt plus 1pt minus 1pt}
  {\def\temp{#3}%
   \ifx\temp\empty\def\mynote{#3}\else\def\mynote{\ (#3)}\fi%
   {\bfseries{\thmname{#1}\thmnumber{ #2}}}\thmnote{\mynote}%
   \belowpdfbookmark{#1 #2\mynote}{#1-#2}}
\theoremstyle{mythmstyle}
\pgfplotsset{%
  every axis post/.code={%
    \if\thesubfigure\empty\else%
    \pgfkeyssetvalue{/pgfplots/title}{(\thesubfigure)}%
    \fi},
}
  \def\cref#1{}
  \def\\{}}
\def\subfigtag#1{}
\pgfplotsset{%
  width = 6.1cm,
  title style={yshift=-1.5ex},
  legend entry/.initial=,
  every axis plot post/.code={%
    \pgfkeysgetvalue{/pgfplots/legend entry}\tempValue
    \ifx\tempValue\empty
    \pgfkeysalso{/pgfplots/forget plot}%
    \else
    \expandafter\addlegendentry\expandafter{\tempValue}%
    \fi
  },
}
\def\addref[#1]#2#3{
  \addplot[
  forget plot,
  mark=none,
  dotted,
  decoration={
    text along path,
    text={{#2}{}},
    #1
  },
  postaction={decorate},
  ][domain=\pgfkeysvalueof{/pgfplots/xmin}:\pgfkeysvalueof{/pgfplots/xmax}]
  {#3};
}
\def\I#1{\mathbb I_{#1}}
\def\tol{\varepsilon}
\def\Diff#1{\mathrm{D}_{#1}}
\xParseDeclareExpectation{\maxp}{\max}\{\}
\xParseDeclareExpectation{\minp}{\min}\{\}
\NewDocumentCommand{\dist}{e{^} o m}
{d_{\IfValueTF{#2}{#2}{K}}
  \IfValueT{#1}{^{#1}}\p{#3}}
\NewDocumentCommand{\Fass}{e{^} m}
{F\IfValueT{#1}{^{#1}}\p{#2}}
\def\diagbeta{\beta_{\textnormal{d}}}
\def\crossbeta{\beta_{\textnormal{c}}}
\def\Zest{\Delta \mathcal{P}}
\NewDocumentCommand{\X}{e{^_} o m}{%
  \IfValueT{#3}{\overline} X%
  \IfValueT{#1}{^{#1}} %
  _{\IfValueT{#2}{{#2},} %
    \IfValueT{#3}{{#3},} %
    #4} %
}
\NewDocumentCommand{\Xa}{e{^_} o m}{%
  \IfValueT{#3}{\overline} X%
  ^{\p{a}\IfValueT{#1}{{,#1}}} %
  _{\IfValueT{#2}{{#2},} %
    \IfValueT{#3}{{#3},} %
    #4} %
}
\NewDocumentCommand{\g}{e{^_} o m}
{%
  \IfValueT{#3}{\overline} g%
  \IfValueT{#1}{^{#1}} %
  _{\IfValueT{#2}{{#2},} %
    \IfValueT{#3}{{#3},} %
    #4} %
}
\def\parent#1{\langle #1 \rangle}
\def\niceset{\(\p{\textnormal{Si}}\)}
\def\half{{\textstyle{\frac{1}{2}}}}
\def\N{2}
\def\eps{\varepsilon}
\def\spellout#1{%
  \if!\ifnum9<1#1!\fi%
  \text{\numberstringnum{#1}}\else#1\fi}
    \def\addlegendimage{\csname pgfplots@addlegendimage\endcsname}
    \def\addlegendentry{\csname pgfplots@addlegendentry\endcsname}
  \pgfplotsset{#1}
\def\bdoi#1{\
  \check@doiurl@prefix#1\end
\ifnum\@doiurlfull>\z@
  \ims@href{#1}{#1}%
\else
  \ims@href{\doi@base#1}{%
    \doi@base%
    \saveexpandmode\noexpandarg%
    \StrSubstitute{#1}{_}{\_}%
    \restoreexpandmode}%
\fi%
\ignorespaces}
\crefname{assumption}{Assumption}{Assumptions}
\crefname{equation}{}{}
  \DeclareFontShape{T1}{lmr}{m}{scit}{<->ssub*lmr/m/scsl}{}%
\newtheorem{theorem}{Theorem}[section]
\newtheorem{lemma}[theorem]{Lemma}
\newtheorem{corollary}[theorem]{Corollary}
\newtheorem{assumption}[theorem]{Assumption}
\newtheorem{definition}[theorem]{Definition}
\newtheorem{remark}[theorem]{Remark}
\renewcommand{\bdoi}[1]{\href{http://doi.org/#1}{doi:\detokenize{#1}}}
\newcommand{\beprint}[1]{\href{https://arxiv.org/abs/#1}{arxiv:\detokenize{#1}}}
\begin{document}

\begin{frontmatter}
  \title{Multilevel Path Branching for Digital Options}
  \runtitle{Multilevel Path Branching for Digital Options}

  \begin{aug}
    \author[A]{\fnms{Michael
        B.}~\snm{Giles}\ead[label=e1]{mike.giles@maths.ox.ac.uk}\orcid{0000-0002-5445-3721}}
    \and
    \author[B]{\fnms{Abdul-Lateef}~\snm{Haji-Ali}\ead[label=e2]{a.hajiali@hw.ac.uk}\orcid{0000-0002-6243-0335}}
    \address[A]{Mathematical Institute, University of Oxford%
      \printead[presep={,\ }]{e1}}

    \address[B]{Maxwell Institute,
      School of Mathematical and Computer Sciences,
      Heriot-Watt University%
      \printead[presep={,\ }]{e2}}
  \end{aug}

\begin{abstract}
  We propose a new Monte Carlo-based estimator for digital options with assets
  modelled by a stochastic differential equation (SDE). The new estimator is
  based on repeated path splitting and relies on the correlation of
  approximate paths of the underlying SDE that share parts of a Brownian path.
  Combining this new estimator with Multilevel Monte Carlo (MLMC) leads to an
  estimator with a computational complexity that is similar to the complexity
  of a MLMC estimator when applied to options with Lipschitz payoffs.
\end{abstract}

  \begin{keyword}[class=MSC]
    \kwd[Primary ]{65C05}
    \kwd{65C30}
    \kwd[; secondary ]{65B99}
    \kwd{60J85}
  \end{keyword}

  \begin{keyword}
    \kwd{Monte Carlo}
    \kwd{Multilevel}
    \kwd{Path splitting}
    \kwd{Computational complexity}
    \kwd{Branching processes}
  \end{keyword}
\end{frontmatter}

\section{Introduction} %
In its simplest form, the Multilevel Monte Carlo (MLMC) path simulation method
\cite{giles:MLMC} considers a scalar SDE
\begin{equation}\label{eq:sde-scalar}
  \D \X{t} = a\p{\X{t},t}\, \D t + \sigma\p{\X{t},t}\, \D W_t,
\end{equation}
for \(t \in [0,1]\) with a sequence of approximate paths \(\br{\p{\X[\ell]{t}}_{t \in
    [0,1]}}_{\ell \in \br{0,1,\ldots}}\) using uniform timesteps of size \(h_\ell=h_{0}
M^{-\ell}\) for some \(h_{0} \in \rset_{+}\) and \(M\in\zset_+\). If we are
interested in estimating \(\E{f\p{\X{1}}} \approx \E{f\p{\X[L]{1}}}\) for some
function \(f\) and we define \(\Delta P_{\ell} \defeq f\p{\X[\ell]{1}} -
f\p{\X[\ell-1]{1}}\) with \(\Delta P_{0} \defeq f\p{\X[0]{1}}\), we have the
telescoping summation
\[
  \E{P_L} = \sum_{\ell=0}^L \E{\Delta P_\ell}.
\]
The MLMC estimator is then
\begin{equation}\label{eq:MLMC}
\sum_{\ell=0}^L \frac{1}{N_\ell} \sum_{n=1}^{N_\ell} \Delta P_\ell^{\p{i}},
\end{equation}
with the coarse and fine paths within \(\Delta P_\ell^{\p{i}}\) based on the same
driving Brownian path. If there are constants \(\alpha, \beta, \gamma\) such that the cost
of a level \(\ell\) sample \(\Delta P_\ell\) is \(W_\ell\sim 2^{\gamma\ell}\), its variance is \(V_\ell
\defeq \var{\Delta P_{\ell}} \sim 2^{-\beta \ell}\), and the weak error is
\(\abs{\E{f\p{\X[L]{1}}-f\p{\X{1}}}} \sim 2^{-\alpha L}\), then an optimal number of
levels, \(L\), and an optimal number of samples per level,
\(\br{N_{\ell}}_{\ell=0}^{L}\), can be chosen to achieve a root-mean-square accuracy
of \(\eps\) with a computational complexity which is \(\Order{\eps^{-2}}\) if
\(\beta>\gamma\), \(\Order{\eps^{-2}\abs{\log \eps}^2}\) if \(\beta=\gamma\), and
\(\Order{\eps^{-2-\p{\gamma-\beta}/\alpha}}\) if \(\beta<\gamma\)~\cite{giles:acta}.

If the function \(f\) is globally Lipschitz, with constant \(L_f\), then
\[
V_\ell \leq L_f^2\ \E{\p{\X[\ell]{1}-\X[\ell-1]{1}}^2}.
\]
In the case of the Euler-Maruyama discretization when the SDE coefficients,
\(a\) and \(\sigma\), are Lipschitz and grow linearly in \(x\) and are
\(1/2\)-H\"older continuous in \(t\), this results in \(V_\ell=\Order{h_\ell}\)
\cite[Theorem 10.2.2]{kloden:numsde} along with \(W_\ell=\Order{h_\ell^{-1}}\), so
\(\beta=\gamma\) and the computational complexity is \(\Order{\eps^{-2}\abs{\log
    \eps}^2}\). When using a first-order Milstein discretization, and under
additional differentiablity assumptions on \(\sigma\), the variance is reduced to
\(V_\ell=\Order{h_\ell^{2}}\) and the complexity is improved to
\(\Order{\eps^{-2}}\). A limitation of the first-order Milstein discretization
is that it often requires the simulation of L\'evy areas for multi-dimensional
SDEs. To avoid this, Giles \& Szpruch \cite{giles:antithetic} developed an
antithetic, truncated Milstein estimator which omits these L\'evy area terms
and still achieves an MLMC variance \(V_\ell\) which is \(\Order{h_\ell^{2}}\) when
\(f\) is smooth, and \(\Order{h_\ell^{3/2}}\) when \(f\) is Lipschitz and
piecewise smooth; both are sufficient for the computational complexity of MLMC
to be \(\Order{\eps^{-2}}\).

In this article, we are concerned with the more difficult case in which \(f\)
is a discontinuous function such as \(f\p{x}=\I{x>K}\); in computational
finance this is referred to as a digital option.
In this case \(\Delta P_\ell\) is nonzero only if the final values of the fine and
coarse path approximations \(\X[\ell]{1}\) and \(\X[\ell-1]{1}\) within \(\Delta P_{\ell}\)
are on opposite sides of \(K\). Speaking loosely (we will be precise later),
in the case of using Euler-Maruyama discretization, this only happens if
\(\X{1}, \X[\ell]{1}, \X[\ell-1]{1}\) are all within \(\Order{h_\ell^{1/2}}\) of \(K\),
and the probability of that is \(\Order{h_\ell^{1/2}}\). Hence \(V_\ell \approx
\Order{h_\ell^{1/2}}\) and so \(\beta \approx \gamma/2\), resulting in a computational
complexity which is approximately \(\Order{\eps^{-5/2}}\) since standard weak
convergence results give \(\alpha = 2\beta\). With the Milstein discretization, \(V_\ell \approx
\Order{h_\ell}\) and the complexity is improved to
\(\Order{\eps^{-2}\abs{\log{\eps}}^{2}}\), but with the antithetic Milstein
estimator \(V_\ell\) remains \(\Order{h_\ell^{1/2}}\).

The challenge of discontinuous functions such as this has been tackled in
previous research. In the context of the first-order Milstein approximation, a
conditional expectation with respect to the Brownian increment for the final
timestep, conditional on the Brownian path up to that point, has been used to
decrease the variance \(V_\ell\) from \(\Order{h_\ell}\) to \(\Order{h_\ell^{3/2}}\)
\cite{giles:milstein-analysis}. In simple cases the conditional expectation
can be evaluated analytically \cite{giles:milstein-analysis}, while in harder
cases a change of measure or path splitting can be used \cite{giles:acta}.
Unfortunately, none of these approaches work with the Euler-Maruyama
discretization. One method which is effective for a subset of cases with
particularly simple functions \(f\) is ``pre-integration'', a variant of
conditional expectation or conditional sampling based on the final value of
the driving Brownian path. Originally developed to improve the effectiveness
of Quasi-Monte Carlo integration
\cite{acn:conditonal-sampling,gks:smoothing-anova,gkls:smoothing-kinks}, it
also works well with MLMC \cite{bayer:smoothing}. Another effective method
uses adaptive refinement of paths which lie close to the discontinuity
\cite{hajiali:adaptive}. When used for Euler-Maruyama or Milstein schemes,
adaptive refinement methods recover the convergence rates of the variance,
\(V_{\ell}\), that are observed for Lipschitz functions without substantially
increasing the cost per sample. However, these methods lead to estimators with
high kurtosis which can cause difficulties for MLMC algorithms that rely on
variance estimates. Additionally, adaptive refinement does not recover the
improved variance convergence rates of antithetic Milstein. See also
\cite{giles:mlmc-discont} for a more thorough discussion of existing
methodologies.

Inspired in part by the literature on dyadic Branching Brownian Motion
\cite{alison:superprocesses,mckean:application-bm}, the idea that we develop
in the current article, as illustrated in \cref{fig:branching-estimator},
builds on path splitting where each MLMC sample, instead of corresponding to a
single pair of fine and coarse paths, is an average of the difference \(\Delta
P_\ell\) from many particles, each of which is a pair of fine and coarse paths.
The branching process to generate the particles is similar to the process of
dyadic Branching Brownian Motion, except that the branching times are
deterministic not exponential random times.
\cref{fig:branching-est-tree} illustrates the logical structure of the
particle generation. If there are \(2^\ell\) timesteps for the fine path
approximation \(\X[\ell]{\cdot}\) and \(2^{\ell-1}\) timesteps for the coarse path
\(\X[\ell-1]{\cdot}\), then in the simplest version of the algorithm the first
branching from 1 to 2 particles is after \(2^{\ell-1}\) fine timesteps, the
second branch from 2 to 4 particles is after another \(2^{\ell-2}\) fine
timesteps, and so on, until there is only one coarse timestep left, at which
there is a final branching into \(2^{\ell-1}\) particles. This gives the
following number of particles at different stages of the calculation:

\begin{figure}\centering
  \pgfdeclarelayer{markers}
  \pgfsetlayers{main, markers}
\makeatletter
\catcode`\!=4
\def\createlist#1{\gdef#1{}}
\def\push#1#2{\xdef#1{#1#2!}}
\def\qtop#1#2{\edef#2{\expandafter\@q@topitem#1!\@q@xpop!\relax}}
\def\@q@topitem#1!#2\relax{#1}
\def\qpop#1#2{\qtop#1#2\pop#1}
\def\pop#1{\xdef#1{\expandafter\@q@xpop#1}}
\def\@q@xpop#1!{}
\def\print#1{\expandafter\@q@xprint#1!\@q@xpop!\relax}
\def\@q@xprint#1!#2\relax{\ifx\@q@xpop#1\else\@q@xprint#2\relax#1\fi}
\catcode`\!=12
\makeatother

\usetikzlibrary{calc}  %
\newenvironment{tikzaxis}[1][\unskip]
{\begin{axis}[#1]%
    \coordinate (O) at (axis cs:0,0);%
    \coordinate (X) at (axis cs:1,0);%
    \coordinate (Y) at (axis cs:0,1);%
    \coordinate (SW) at (\pgfkeysvalueof{/pgfplots/xmin},\pgfkeysvalueof{/pgfplots/ymin});%
    \coordinate (NE) at (\pgfkeysvalueof{/pgfplots/xmax},\pgfkeysvalueof{/pgfplots/ymax});%
  \end{axis}%
  \scope[x={($(X)-(O)$)}, y={($(Y)-(O)$)}, shift={(O)}]%
  \clip  (SW) rectangle (NE);}
{\endscope}

\makeatletter
\newcommand{\gettikzxy}[3]{%
  \pgfpointanchor{#1}{center}%
  \edef#2{\the\pgf@x}%
  \edef#3{\the\pgf@y}%
}
\makeatother

\pgfmathdeclarefunction{randn}{2}{%
  \pgfmathparse{ #1 + sqrt(#2) * sqrt(-2*ln(rnd))*cos(deg(2*pi*rnd)) }}

\newcommand{\brownian}[4][]{%
  \draw[#1] (#2) \foreach \x in {1,...,#4}{ -- ++(#3/#4,{randn(0,#3/#4)})}}

\createlist{\coordlist}
\def\pushCoord#1{\gettikzxy{#1}{\XCoord}{\YCoord}%
  \push{\coordlist}{\XCoord}%
  \push{\coordlist}{\YCoord}}
\def\popCoord#1{
  \qpop{\coordlist}{\XCoord}%
  \qpop{\coordlist}{\YCoord}%
  \coordinate (#1) at (canvas cs: x=\XCoord, y=\YCoord);}

\pgfmathsetmacro\vL{2}

\gdef\treeXticks{} %
\gdef\treeXticksLabels{} %
\foreach\x in {0,...,\vL} {%
  \pgfmathparse{1-2^-(\x+1)}%
  \xdef\treeXticks{\treeXticks\pgfmathresult\ifnum\x<\vL,\fi}%
  \xdef\treeXticksLabels{\treeXticksLabels $1{-}\tau_{\x}$\ifnum\x<\vL,\fi }
}

\makeatletter
\pgfkeys{
  /pgfplots/xticklabels/.code={%
    \expandafter\pgfplotslistnew\expandafter\pgfplots@xticklabels\expandafter{#1}%
    \let\pgfplots@xticklabel=\pgfplots@user@ticklabel@list@x%
  },
  /pgfplots/extra x tick labels/.code={%
    \expandafter\pgfplotslistnew\expandafter\pgfplots@extra@xticklabels\expandafter{#1}%
    \let\pgfplots@extra@xticklabel=\pgfplots@user@extra@ticklabel@list@x%
  }
}
\makeatother

\pgfdeclarelayer{markers}
\pgfsetlayers{main, markers}

  \begin{subfigure}[t]{0.5\textwidth}\centering
    \subfigtag{{left}}\phantomsubcaption%
\begin{tikzpicture}[
  branch/.style={mark=*,mark options={color=white,draw=black}},
  final/.style={mark=+,mark options={color=white,draw=black}}]
  \begin{tikzaxis} [%
    axis x line=bottom,hide y axis,
    xmin=0, xmax=1.1, ymin=-1.05, ymax=1.05,
    xtick = {0,1},
    extra x ticks = {\treeXticks},
    extra x tick labels = {\treeXticksLabels},
    extra x tick style={
      tick label style={font=\small,rotate=50, anchor=east}},
    xlabel={\(t\)}]

    \draw [thick] (0,0) -- (0.5,0) coordinate (A) ;
    \begin{pgfonlayer}{markers}
      \draw plot[mark=*,mark options={color=white,draw=black}] (A);
    \end{pgfonlayer}

    \foreach \l in {0,...,\vL}
    {
      \pgfmathsetmacro\vxstart{1-2^(-(1+\l))}
      \pgfmathsetmacro\vyDelta{2^(-(1+\l))}
      \pgfmathsetmacro\vCount{2^(\l)}
      \pgfmathsetmacro\vxDelta{2^(-(1+\l)) - (\l < \vL? 2^(-(1+\l)-1) : 0)}

      \foreach \b in {1,...,\vCount}%
      {
        \coordinate (S) at (\vxstart, { 2*(2*\b-1)*\vyDelta - 1});

        \draw [thick] (S) -- ++(\vxDelta,+\vyDelta) coordinate (B);
        \draw [thick] (S) -- ++(\vxDelta,-\vyDelta) coordinate (A);

        \def\style{branch}
        \ifnum\l=\vL
        \def\style{final}
        \fi
        \begin{pgfonlayer}{markers}
          \draw plot[\style] (A) plot[\style] (B);
        \end{pgfonlayer}

        \ifnum\l=\vL
        \node[right] at (A) {\pgfmathparse{int(\b*2-1)}\pgfmathresult};
        \node[right] at (B) {\pgfmathparse{int(\b*2)}\pgfmathresult};
        \fi
      }%
    };
  \end{tikzaxis}
\end{tikzpicture}
     \label{fig:branching-est-tree}
  \end{subfigure}\hfill
  \begin{subfigure}[t]{0.5\textwidth}\centering
    \subfigtag{{right}}\phantomsubcaption%
\pgfmathsetseed{1}
\begin{tikzpicture}[
  branch/.style={mark=*,mark options={color=white,draw=black}},
  final/.style={mark=+,mark options={color=white,draw=black}}]
  \begin{tikzaxis}[
    axis x line=bottom,hide y axis,
    ymin=-1.1,ymax=1.8,xmin=0,xmax=1.04,
    xtick = {0,1},
    extra x ticks = {\treeXticks},
    extra x tick labels = {\treeXticksLabels},
    extra x tick style={
      tick label style={font=\small,rotate=50, anchor=east}},
    xlabel={\(t\)}]

    \pgfmathsetmacro\vN{300}  %
    \pgfmathsetmacro\vNi{int(round(\vN*0.5))}
    \brownian{0,0}{0.5}{\vNi} coordinate (A);
    \pushCoord{A};

    \foreach \l in {0,...,\vL}
    {
      \pgfmathsetmacro\vCount{2^((\l+1)-1)}
      \pgfmathsetmacro\vxDelta{2^(-(1+\l)) - (\l < \vL? 2^(-(1+\l)-1) : 0)}
      \pgfmathsetmacro\vNi{int(round(\vN*\vxDelta))}

      \foreach \b in {1,...,\vCount}%
      {
        \popCoord{S};
        \begin{pgfonlayer}{markers}
          \draw plot[branch] (S);
        \end{pgfonlayer}

        \brownian{S}{\vxDelta}{\vNi} coordinate (B);
        \pushCoord{B};

        \brownian{S}{\vxDelta}{\vNi} coordinate (C);
        \pushCoord{C};
      }%
    }

    \pgfmathsetmacro\vCount{2^(1+\vL)}
    \begin{pgfonlayer}{markers}
      \foreach \b in {1,...,\vCount}{
        \popCoord{S};
        \draw plot[final] (S);
      }
    \end{pgfonlayer}
  \end{tikzaxis}
\end{tikzpicture}
     \label{fig:branching-est-tree-bm}
  \end{subfigure}\hfill

  \caption{An illustration of the branching estimator \(\Zest_{4}\) defined in
    \cref{def:est-main} for \(\tau_{\ell'}=2^{-\ell'-1}\) and \(h_{\ell}=2^{-\ell}\).
    \subref{fig:branching-est-tree} shows the logical structure ending up in
    the eight correlated samples of \(\Delta P_{4}\).
    \subref{fig:branching-est-tree-bm} shows the eight underlying, correlated
    Brownian paths.}
  \label{fig:branching-estimator}
\end{figure}

\vspace{0.2in}

\begin{tabular}{rcl}
  1          & particle for first & \(2^{\ell-1}\) fine timesteps \\
  2          & particles for next & \(2^{\ell-2}\) fine timesteps \\
  \(4=2^2\)    & particles for next & \(2^{\ell-3}\) fine timesteps \\
  \vdots     & \vdots & \vdots\\
  \(2^{\ell-2}\) & particles for next  & \(2\) fine timesteps \\
  \(2^{\ell-1}\) & particles for final  & \(2\) fine timesteps \\
\end{tabular}

\vspace{0.2in}

\noindent
so that the total cost (i.e.~total number of particle-timesteps) is
\(\p{\ell{+}1}\,2^{\ell-1}\) which is not much more than the usual \(2^\ell\) cost per
sample.
The MLMC sample value would be an average of the outputs from the \(2^{\ell-1}\)
particles:
\[
  \Zest_\ell \defeq \frac{1}{2^{\ell-1}} \sum_{i=1}^{2^{\ell-1}} \Delta P^{\p{i}}_\ell,
\]
i.e.~this \(\Zest_\ell\) counts as a single sample within an MLMC estimator
similar to \cref{eq:MLMC}.

The claim is that with the Euler-Maruyama discretization we obtain
\(\var{\Zest_\ell} \approx \Order{h_\ell}\) so that approximately we have \(\beta\approx 1, \gamma \approx 1\).
We present here a heuristic analysis which we make rigorous later. Suppose two
particles \(\p{i}\) and \(\p{j}\) share a common driving Brownian path up to
time \(1{-}\tau\). Conditional on \(\X{1-\tau}\), the distribution of \(\X{1}\) is
approximately Normal with a standard deviation of \(\Order{\tau^{1/2}}\) and peak
probability density of \(\Order{\tau^{-1/2}}\). For both particles to finish
within \(\Order{h_\ell^{1/2}}\) of \(K\) (by which we mean that both the coarse
and fine path approximations end within \(\Order{h_\ell^{1/2}}\) of \(K\))
requires that \(\X{1-\tau}\) lies within \(\Order{\tau^{1/2}}\) of \(K\), which
occurs with probability \(\Order{\tau^{1/2}}\), and conditional on this the
probability that each particle finishes within \(\Order{h_\ell^{1/2}}\) of \(K\)
is \(\Order{h_\ell^{1/2}\tau^{-1/2}}\). Hence, the probability that both particles
finish within \(\Order{h_\ell^{1/2}}\) of \(K\) is
\[
\Order*{ \tau^{1/2} \ \p{ h_\ell^{1/2}\tau^{-1/2}}^2} = \Order{h_\ell\,\tau^{-1/2}},
\]
and therefore
\[
\E*{\, \abs{\Delta P_\ell^{\p{i}}} \ \abs{\Delta P_\ell^{\p{j}}} \, }
= \Order{h_\ell\,\tau^{-1/2}}.
\]

There are \(2^{2\p{\ell-1}}\) possible pairs \(\p{i,j}\), and for each \(i\) the
number of particle pairs \(\p{i,j}\) with \(j\neq i\) and different \(\tau\) values
are:

\vspace{0.2in}
\begin{tabular}{rl}
  \(2^{\ell-2}\) & with \(\tau=2^{-1}\) \\
  \(2^{\ell-3}\) & with \(\tau=2^{-2}\) \\
  \(2^{\ell-4}\) & with \(\tau=2^{-3}\) \\
  \vdots     & \vdots \\
  \(2\) & with \(\tau=2^{-(\ell-2)}\) \\
  \(1\) & with \(\tau=2^{-(\ell-1)}\) \\
\end{tabular}
\vspace{0.1in}

\noindent
In addition there is the particle pair \(\p{i,i}\) for which \(\E*{\p*{\Delta
    P_\ell^{\p{i}}}^2} = \Order{h_\ell^{1/2}}\), as discussed previously. Together,
these give
\[
V_\ell
\ \leq\ \E{ \Zest_\ell^2 }
\ =\  \Order*{ 2^{-(\ell-1)}\, h_\ell^{1/2} + 2^{-(\ell-1)}\,  \sum_{\ell'=1}^{\ell-1}  2^{\ell-1-\ell'} h_\ell\, 2^{\ell'/2}}
\ =\ \Order*{ h_\ell},
\]
with the largest contribution coming from the \(\tau{=}1/2\) branch, the most common branching point.
This last observation suggests that the variance is well modelled by
\begin{eqnarray*}
  V_\ell &\approx& \var*{
          \E*{ f\p{\X{1}} \given \X{1/2}=\X[\ell]{1/2}} -
          \E*{ f\p{\X{1}} \given \X{1/2}=\X[\ell-1]{1/2}}} \\
      &\approx& \var*{
          \p{\X[\ell]{1/2} - \X[\ell-1]{1/2}} \cdot \nabla_x \E{ f\p{\X{1}} \given \X{1/2}=x} \given_{x=\X[\ell-1]{1/2}} },
\end{eqnarray*}
and we will later follow a similar approach in analyzing the branching
estimator based on the antithetic Milstein approximation.

One final point for this introduction concerns optimization of the
branching times. If \(1\!-\!\tau_{\ell'}\) is the \(\ell'\)-th branching time, then the
total cost of \(\Zest_\ell\) is of order
\[
2^\ell \p*{ 1 + \sum_{\ell'=1}^\ell 2^{\ell'} \tau_{\ell'}},
\]
and the variance bound is of order
\[
h_\ell \sum_{\ell'=1}^\ell 2^{-\ell'} \, \tau_{\ell'}^{-1/2}.
\]
Optimizing \(\tau_{\ell'}\) to minimize the cost for a fixed variance gives \(
\tau_{\ell'} \propto 2^{-4\ell'/3} \) which is slightly different to the initial choice of
\(\tau_{\ell'} {=} 2^{-\ell'}\) and eliminates the additional linear factor in the
cost. Hence, our main analysis will consider branching times \(\tau_{\ell'}{=}2^{-\eta
  \ell'}\) for some constant \(\eta\). These branching times may not coincide with
discretization timesteps. There are two ways to handle this in an
implementation. One is to round the times to the nearest coarse path timestep,
and the other is to keep the times as specified in which case there is a
common Brownian increment for the first part of the timestep, and then
independent Brownian increments for the branched paths for the second part of
the timestep.

In the remainder of the article, we consider the SDE \cref{eq:sde-scalar} in
\(d\)-dimensions for \(t \in [0,1]\):
\begin{equation}\label{eq:sde}
  \D \X{t} = a\p{\X{t}, t}\D t + \sigma\p{\X{t}, t}\D W_{t},
\end{equation}
where \(a: \rset^{d} \times [0, 1] \to \rset^{d}\) and \(\sigma: \rset^{d} \times [0, 1] \to
\rset^{d} \times \rset^{d'}\) are Borel-measurable functions and \(W\) is a
\(d'\)-dimensional Wiener process and denote its natural filtration by
\(\p{\mathcal F_{t}}_{0 \leq t \leq 1}\). We will again refer to a corresponding
sequence of approximations \(\br{\X[\ell]{t}}, {\ell=0,1,\ldots}\) using uniform
timesteps \(h_\ell=h_{0} M^{-\ell}\) for some \(h_{0} \in \rset_{+}\) and
\(M\in\zset_+\). We assume that \(a\) and \(\sigma\) satisfy at least the necessary
conditions (measurability, linear growth and global Lipschitzness in \(x\))
for existence and uniqueness of \(\X{t}\) in the strong sense
\cite{kloden:numsde}. Our goal is to estimate \(\prob{X_{1} \in S} = \E{\I{\X{1}
    \in S}}\) for some closed set \(S \subset \rset^{d}\) with boundary \(\partial S \eqdef
K\).

The article is organized as follows; see \cref{fig:outline} for an outline of
the assumptions/analysis carried out in the current work. In
\cref{sec:method}, we present the new branching estimator for a given
underlying estimator \(\Delta P_{\ell}\). We also bound the work and variance of the
branching estimator in \cref{lem:estimator-workvar} under the main
\cref{ass:est-main} on the underlying estimator \(\Delta P_{\ell}\) and show the
improved computational complexity of MLMC when using the branching estimator
in \cref{thm:mlmc-complexity-optimal-eta}. In \cref{sec:strong}, we consider
\(\Delta P_{\ell} \equiv \I{\X[\ell]{1}\in S} - \I{\X[\ell-1]{1}\in S}\), and prove that under
\cref{ass:cond-density}, ``strong'' approximations such as the Euler-Maruyama
or Milstein numerical schemes satisfy \cref{ass:est-main}. We conclude
\cref{sec:strong} with a numerical verification of the results in that
section. In \cref{sec:antithetic}, we consider the antithetic estimator that
was proposed in \cite{giles:antithetic}. We again show in
\cref{thm:Zest-antithetic-rates} that an antithetic estimator under
\cref{ass:g-derv-bounds,ass:g-weak-conv} satisfies \cref{ass:est-main} and
conclude the section with a numerical verification of the presented
theoretical results. In \cref{sec:sde-bounds}, we consider elliptic SDEs,
i.e., SDEs whose coefficients are bounded and their diffusion coefficient is
elliptic. In \cref{thm:cond-density-sde} we prove that the solutions to such
SDEs satisfy \cref{ass:cond-density,ass:g-derv-bounds} under mild assumptions
on \(K\). Then in \cref{thm:cond-density-sde-gbm} we prove that exponentials
of those solutions also satisfy \cref{ass:cond-density} under different
assumptions on \(K\).

In what follows, for \(\ell \in \nset \equiv \br{0, 1, \ldots }\), we use the notation
\(A_{\ell} \lesssim B_{\ell}\) to denote \(A_{\ell} \leq c B_{\ell}\), and \(A_{\ell} \simeq B_{\ell}\) to
denote \(c' B_{\ell} \leq A_{\ell} \leq c B_{\ell}\) for some constant, deterministic
\(c,c'>0\) that are independent of the index, \(\ell\), the accuracy tolerance,
\(\varepsilon\), and other parameters which will be specified. For \(u \in
\br{-1,1}^{\ell}\), let \(\abs{u}_{0} \defeq \ell\) and, for \(\ell \geq 1\), let
\(\parent{u} \defeq \p{u_{1}, u_{2}, \ldots, u_{\ell-1}} \in \br{-1,1}^{\ell-1}\), using
the convention \(\br{-1,1}^{0} \defeq \br{\varnothing}\). For \(k \in \nset, l \in
\nset\), let \(C^{k,l}_{b}\) be the space of continuously differentiable
bounded functions \(\p{x, t} \mapsto f\p{x, t}\) for \(\p{x,t} \in \rset^{d} \times [0,
T]\) with uniformly bounded derivatives with respect to \(x\)
(resp. with respect to \(t\)) up to order \(k\) (resp. \(l\)). When \(f\) is a
vector- or a matrix-valued function, \(f \in C_{b}^{k,l}\) means that all
function components are in \(C_{b}^{k,l}\). In addition, all vector and matrix
norms are Euclidean \(\ell^{2}\) norms.

\begin{figure*}
  \centering

\tikzstyle{decision} = [diamond, draw, fill=none,
text width=4.5em, text badly centered, node distance=3cm, inner sep=0pt]%
\tikzstyle{assumption} = [rectangle, draw, fill=none,
text centered, rounded corners, minimum height=3em]%
\tikzstyle{line} = [draw, -latex']%
\tikzstyle{result} = [draw, ellipse,fill=none, node distance=3cm,
minimum height=2em]
\tikzstyle{final} = [draw, ellipse,fill=lightgray, node distance=3cm,
minimum height=2em]%

\begingroup
\Crefformat{lemma}{Lem.#2#1#3}%
\Crefformat{assumption}{Assump.#2#1#3}%
\Crefformat{theorem}{Thm.#2#1#3}%

\begin{tikzpicture}[node distance = 3cm,
  every text node part/.style={align=center},
  auto]

  \node [assumption] (main-ass) {Main\\
    \Cref{ass:est-main}};
  \node [final, below of=main-ass] (workvar) {Work/Variance\\
    bounds \Cref{lem:estimator-workvar}
  };
  \node [result, left of=main-ass] (strong-thm) {Strong\\
    \Cref{thm:strong-conv-var-rates}
  };
  \node [assumption, left of=strong-thm] (main-strong-ass) {Strong\\
    \Cref{ass:cond-density}
  };
  \node [result, left of=main-strong-ass] (exp-lemma) {Exp SDEs\\
    \Cref{thm:cond-density-sde-gbm}
  };
  \node [result, below of=exp-lemma, node distance=2cm] (elliptic-lemma) {Elliptic SDEs\\
    \Cref{thm:cond-density-sde}
  };
  \node [result, below of=strong-thm, node distance=2cm] (antithetic-lemma)
  {Antithetic\\
    \Cref{thm:Zest-antithetic-rates}
  };
  \node [assumption, left of=antithetic-lemma] (antithetic-ass) {Antithetic\\
    \Cref{ass:g-derv-bounds}
  };

  \node [assumption, below of=antithetic-ass, node distance=1.5cm] (weak-conv-ass) {Weak convergence\\
    \Cref{ass:g-weak-conv}
  };

  \path [line] (main-ass) -- (workvar);

  \path [line] (main-strong-ass) -- (strong-thm);

  \path [line] (strong-thm) -- (main-ass);

  \path [line] (exp-lemma) -- (main-strong-ass);
  \path [line] (elliptic-lemma) -- (main-strong-ass);

  \path [line] (antithetic-ass) -- (antithetic-lemma);
  \path [line] (antithetic-lemma) -- (main-ass);

  \path [line] (elliptic-lemma) -- (antithetic-ass);

  \path [line] (weak-conv-ass) -- (antithetic-lemma);

\end{tikzpicture}
\endgroup
   \caption{Outline of the analysis presented in the current work. Rectangles
    are assumptions while ellipses are lemmas and theorems. An arrow indicates
    implication under sufficient but not necessary conditions.}
  \label{fig:outline}
\end{figure*}
 \section{Branching Estimator}\label{sec:method}%

We begin by giving a formal definition to our branching estimator, using
  random discrete trees and branching processes \cite{gall:random-trees}.
  \begin{definition}[Branching Brownian Motion]\label{def:branching-BM}
    Given \(\tau_{0} \in \p{0,1}, \eta \in \rset_{+}\), let \(\tau_{\ell'} \defeq \tau_{0} \N^{-\eta
      \ell'}\) for all \(\ell'\in \nset\). Let \(\br{W^{u}}_{u \in \bigcup_{\ell'=0}^{\infty}
      \br{-1,1}^{\ell'}}\) be mutually independent Wiener processes and let
    \(\widetilde B_{t}^{\varnothing} \defeq W^{\varnothing}_{t}\) for \(t \in
    [0, 1]\). Then for any \(u \in \bigcup_{\ell'=1}^{\infty} \br{-1,1}^{\ell'}\), define the
    \(\nth{u}\) branch of a Branching Brownian Motion as
  \begin{equation*}
    \widetilde B_{t}^{u} \defeq
    \begin{cases}
      \widetilde B_{t}^{\parent{u}}
      & t \in [0, 1-\tau_{\abs{\parent{u}}_{0}}],\\
        \widetilde B_{1-\tau_{\abs{\parent{u}}_{0}}}^{\parent{u}} + W^{u}_{t - 1 + \tau_{\abs{\parent{u}}_{0}}}
      & t \in (1-\tau_{\abs{\parent{u}}_{0}}, 1].
    \end{cases}
  \end{equation*}
  \end{definition}

  \begin{definition}[Branching estimator]\label{def:est-main}%
    Given \(\ell \in \nset\), let \(\hat \ell \defeq
    \en*\lfloor\rfloor{\log_{\N}\p{\p{h_{\ell}/\tau_{0}}^{-1}}/\eta}, \) such that \(\tau_{\hat \ell-1} \simeq
    h_{\ell}\). Let \(\br{\overline X_{\ell,1}^{u}, \overline X_{\ell-1,1}^{u}}_{u \in
      \br{-1,1}^{\hat \ell}}\) be approximations of the SDE path in \cref{eq:sde}
    with timesteps of sizes \(\br{h_{\ell}, h_{\ell-1}}\), respectively, and for a
    given a Branching Brownian Motion, \(\br{\widetilde B^{u}}_{u \in
      \br{-1,1}^{\hat \ell}}\), as in \cref{def:branching-BM}, and let \({\Delta
      P_{\ell}^{u}} \defeq {\I{\X^{u}[\ell]{1} \in S} - \I{\X^{u}[\ell-1]{1} \in S}}\).
    Finally, define the branching estimator as
  \begin{equation}\label{eq:est-main}
    \Zest_{\ell} \defeq \frac{1}{\N^{\hat \ell}}
    \sum_{u \in \br{-1,1}^{\hat \ell}} \Delta P_{\ell}^{u}.
  \end{equation}
\end{definition}

See \cref{fig:branching-estimator} for an illustration of the path branching
involved in \(\Zest_{\ell}\). Note that a single branch of a Branching Brownian
motion is itself a Brownian Motion. Hence the distribution of \(\Delta P^{u}_{\ell}\)
for \(u \in \br{-1,1}^{\hat \ell}\) is independent of \(u\). We will refer to a
generic \(\Delta P_{\ell}\) and the filtration of its underlying Brownian motion,
\(\p{\mathcal F_{t}}_{0 \leq t \leq 1}\), when the dependence on \(u\) is not
relevant. We will also refer to the cost of \(\Zest_{\ell}\), which we define as
the total number of Brownian increments needed to compute \(\Zest_{\ell}\) based
on an Euler-Maruyama approximation, a Milstein approximation which does not
require simulations of L\'evy areas or a truncated Milstein scheme as
described in \cref{sec:antithetic}. We denote the cost of \(\Zest_{\ell}\) by
\(\textnormal{Work}\p*{\Zest_{\ell}}\). Note that \( \E{\Zest_{\ell}} = \E{\Delta P_{\ell}}
= \E{\I{\X[\ell]{1} \in S} - \I{\X[\ell-1]{1} \in S}} \) and hence we can use
\(\Zest_{\ell}\) instead of \(\Delta P_{\ell}\) in the MLMC setup \cref{eq:MLMC}.
However, under some conditions, we will see in this section that the cost of
\(\Zest_{\ell}\) is not significantly larger than the cost of \(\Delta P_{\ell}\) while
the variance is significantly smaller, leading to a better computational
complexity of MLMC. In particular, even though the cost of each sample of \(\Delta
P_{\ell}\) is \(h_{\ell}^{-1}\), many of the samples share an underlying Brownian
path up to some branching point and hence the total cost for the
\(2^{\hat{\ell}}\) samples is greatly reduced as we will show in
\cref{lem:estimator-workvar}. We make the following general assumptions which
we will relate, in \cref{sec:strong,sec:antithetic}, to assumptions on the SDE
\cref{eq:sde} and \(K \equiv \partial S\):

\begin{assumption}[Estimator assumptions]\label{ass:est-main}
  There exist \(\crossbeta \geq \diagbeta >0, p \geq 0, \tau_{0} \in \p{0,1}\)
  such that
  \begin{subequations}
    \label{eq:est-assumpt}
    \begin{align}
      \label{eq:est-assumpt-diag}\E{ \p{\Delta P_{\ell}}^{2} }
      &\lesssim h_{\ell}^{\diagbeta},\\
      \label{eq:est-assumpt-cross} \textrm{and}\qquad \E*{ \p*{\E{\Delta P_{\ell} \given
      \mathcal{F}_{1-\tau}}}^{2}}
      & \lesssim h_{\ell}^{\crossbeta} / \tau^{p},
    \end{align}
    for all \(\ell \in \nset\) and \(\tau \in [h_{\ell},\tau_{0}]\).
  \end{subequations}
  We also assume that the estimator satisfies the following bias constraint
  for some \(\alpha \geq \diagbeta/2\)
\begin{equation}\label{ass:bias}
  \abs*{\E{\I{\X[\ell]{1} \in S} - \I{\X{1} \in S}}} \lesssim h_{\ell}^{\alpha}.
\end{equation}
\end{assumption}
Typical approximate values for \(\crossbeta,\diagbeta\) and \(p\) are in
\cref{tbl:assumption-values}. The assumption \eqref{ass:bias} is shown to hold
for the Euler-Maruyama scheme for \(\alpha=1\) in \cite[Theorem
2.5]{gobet:density}, when the SDE \cref{eq:sde} is uniformly elliptic, \(a, \sigma
\in C_{b}^{3,1}\) and \(\frac{\partial \sigma}{\partial t} \in C^{1,0}_{b}\).

\begin{table}
  \centering
  \begin{tabular}{c|ccc}
    &\(\diagbeta\)&\(\crossbeta\)&\(p\)\\
    \hline
    Euler-Maruyama & 1/2 & 1 & 1/2\\
    Milstein & 1 & 2 & 1/2 \\
    Antithetic Milstein & 1/2 & 2 & 3/2 \\

  \end{tabular}
  \caption{Limiting values for \(\diagbeta, \crossbeta\) and \(p\) in
    \cref{ass:est-main} for estimators \(\Delta P_{\ell}\) based on Euler-Maruyama,
    Milstein or antithetic Milstein discretizations. These are proved later in
    \cref{sec:strong,sec:antithetic}.}
  \label{tbl:assumption-values}
\end{table}

\begin{theorem}[Work and variance]\label{lem:estimator-workvar}
  For any \(\ell \in \nset\), the estimator \(\Zest_{\ell}\) in \cref{def:est-main}
  satisfies %
  \[
    \textnormal{Work}\p{\Zest_{\ell}} \lesssim \begin{cases}
       {h_{\ell}^{-\maxp{1, 1/\eta}}} & \eta \neq 1,\\
      {\ell \, h_{\ell}^{-1} } & \eta = 1,
    \end{cases}
  \]
  and, under \cref{ass:est-main},
  \[
    \var{\Zest_{\ell}} \lesssim \:
    \begin{cases}
{h_{\ell}^{\diagbeta+1/\eta} + h_{\ell}^{\crossbeta -\maxp{0,p-1/\eta}}} & \eta p
      \neq 1,\\
      {h_{\ell}^{\diagbeta+1/\eta} + \ell \, h_{\ell}^{\crossbeta} } & \eta p = 1.\\
    \end{cases}
  \]
\end{theorem}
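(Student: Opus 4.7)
My plan is to count fine-level Brownian increments segment by segment between consecutive branching times. On $[0,1-\tau_0]$ there is a single driving Brownian path, on $[1-\tau_{k-1},1-\tau_k]$ for $k=1,\ldots,\hat\ell-1$ there are exactly $\N^k$ distinct paths alive, and on $[1-\tau_{\hat\ell-1},1]$ there are $\N^{\hat\ell}$. Each such segment of length $\Delta t$ contributes cost $\N^k\,\Delta t/h_\ell$, so with $\tau_k=\tau_0\,\N^{-\eta k}$ the total work is of order
\[
  h_\ell^{-1}\p*{(1-\tau_0)+\sum_{k=1}^{\hat\ell-1}\N^k(\tau_{k-1}-\tau_k)+\N^{\hat\ell}\tau_{\hat\ell-1}}
  \lesssim h_\ell^{-1}\p*{1+\sum_{k=1}^{\hat\ell}\N^{k(1-\eta)}}.
\]
The geometric sum is $O(1)$ when $\eta>1$, $O(\ell)$ when $\eta=1$, and $O(\N^{\hat\ell(1-\eta)})=O(h_\ell^{1-1/\eta})$ when $\eta<1$, using the definition of $\hat\ell$ which gives $\N^{\hat\ell}\simeq h_\ell^{-1/\eta}$. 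Combining these three regimes produces the claimed bound $h_\ell^{-\maxp{1,1/\eta}}$ for $\eta\neq 1$ and $\ell\,h_\ell^{-1}$ for $\eta=1$.

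\textbf{Variance bound.} Since $\Zest_\ell$ is an average, $\var{\Zest_\ell}\leq\E{\Zest_\ell^2}$ and
\[
  \E{\Zest_\ell^2} = \N^{-2\hat\ell}\sum_{u,v\in\br{-1,1}^{\hat\ell}}\E{\Delta P_\ell^{u}\,\Delta P_\ell^{v}}.
\]
I would classify each pair $(u,v)$ by its coalescence depth $k\in\br{0,1,\ldots,\hat\ell}$, i.e., the largest index at which $u$ and $v$ still agree in their first $k$ coordinates. By the construction of the Branching Brownian Motion their driving paths coincide on $[0,1-\tau_k]$ and evolve with independent Wiener processes thereafter (with $k=\hat\ell$ corresponding to $u=v$). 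The tower property and conditional independence then give
\[
  \E{\Delta P_\ell^{u}\,\Delta P_\ell^{v}}=\E{\p*{\E{\Delta P_\ell\given\mathcal F_{1-\tau_k}}}^2}\lesssim h_\ell^{\crossbeta}\tau_k^{-p}
\]
by \cref{eq:est-assumpt-cross}, while the $u=v$ terms are bounded by $h_\ell^{\diagbeta}$ via \cref{eq:est-assumpt-diag}. The admissible range $\tau\in[h_\ell,\tau_0]$ in \cref{ass:est-main} is satisfied because $\tau_k\in[\tau_{\hat\ell-1},\tau_0]$ and $\tau_{\hat\ell-1}\simeq h_\ell$ by construction. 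Counting $\N^{\hat\ell}$ diagonal pairs and $\N^{2\hat\ell-k-1}$ pairs of depth $k<\hat\ell$ and dividing by $\N^{2\hat\ell}$ yields
\[
  \var{\Zest_\ell}\lesssim \N^{-\hat\ell}h_\ell^{\diagbeta}+h_\ell^{\crossbeta}\sum_{k=0}^{\hat\ell-1}\N^{-k-1}\tau_k^{-p}.
\]

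\textbf{Combining.} Substituting $\tau_k^{-p}\simeq\N^{\eta p k}$ turns the remaining sum into $\sum_{k=0}^{\hat\ell-1}\N^{k(\eta p-1)}$, which is $O(1)$ if $\eta p<1$, $O(\ell)$ if $\eta p=1$, and $O(\N^{\hat\ell(\eta p-1)})=O(h_\ell^{1/\eta-p})$ if $\eta p>1$. Together with $\N^{-\hat\ell}\simeq h_\ell^{1/\eta}$ this recovers the stated two-case bound in which the exponent of $h_\ell^{\crossbeta}$ is reduced by $\maxp{0,p-1/\eta}$. No deep probabilistic argument is required beyond the conditional independence of the branches; the main obstacle is the careful bookkeeping of the three regimes $\eta\lessgtr 1$ in the work and $\eta p\lessgtr 1$ in the variance, and in particular verifying that all coalescence times $\tau_k$ appearing in the double sum lie in the admissible range of \cref{ass:est-main}.
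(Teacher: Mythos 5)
Your proof is correct and follows essentially the same route as the paper: the same segment-by-segment count of Brownian increments for the work bound, and the same decomposition of \(\E{\Zest_\ell^2}\) into diagonal terms bounded via \cref{eq:est-assumpt-diag} and off-diagonal terms classified by coalescence depth and bounded via \cref{eq:est-assumpt-cross}, followed by the identical three-regime geometric-sum bookkeeping. The only (immaterial) difference is that the paper additionally budgets up to two extra Brownian increments per path-segment for branching times that do not align with the discretization grid, contributing an extra term of order \(2^{\hat\ell}\lesssim h_\ell^{-1/\eta}\) that is absorbed by the stated bound.
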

\begin{proof}
  The proof is a slight generalization of the argument in the Introduction.

  \textbf{Work:} {If the branching points coincide with the discretization
    grid specified by \(h_{\ell}\), each path on the time interval
    \([1{-}\tau_{\ell'-1},1{-} \tau_{\ell'}]\) contains \(h_\ell^{-1}(\tau_{\ell'-1} - \tau_{\ell'})\)
    fine timesteps; if they do not coincide then at worst each path segment
    requires \(\lfloor h_\ell^{-1}(\tau_{\ell'-1} - \tau_{\ell'})\rfloor + 2\) Brownian increments.
    Accordingly, the total work is bounded by}
  \[
    \begin{aligned}
      &h_{\ell}^{-1} \p*{\p{1-\tau_{0}} + \sum_{\ell'=1}^{\hat \ell-1} \N^{\ell'} \p{\tau_{\ell'-1} -
        \tau_{\ell'}} + \N^{\hat \ell} \tau_{\hat \ell-1}
        } + 2 \sum_{\ell'=0}^{\hat \ell-1} 2^{\ell'}\\
      &= h_{\ell}^{-1} \p*{\p{1-\tau_{0}} +
        \tau_{0}\,\sum_{\ell'=1}^{\hat \ell-1} \N^{\ell'} \p{\N^{-\eta \, \p{\ell'-1}} - \N^{-\eta \ell'}}
        + \tau_{0}\, \N^{\hat \ell} \, \N^{-\eta \p{\hat \ell-1}}
        } + 2 \sum_{\ell'=0}^{\hat \ell-1} 2^{\ell'}\\
      &\lesssim
      \begin{cases}
        {h_{\ell}^{-1} \N^{\,\maxp{1-\eta,0}\hat \ell}} + 2^{\hat \ell}& \eta \neq 1,\\
        {\hat \ell \, h_{\ell}^{-1}} + 2^{\hat \ell}& \eta = 1.
      \end{cases}%
    \end{aligned}
  \]
  and noting that \(\N^{\hat \ell} \lesssim h_{\ell}^{-1/\eta}\) and \(\hat \ell \lesssim \ell\) we obtain the desired result.

  \vskip 0.3cm \noindent \textbf{Variance: }
  \[
      \var*{\Zest_{\ell}}
      = \frac{1}{\N^{2\hat \ell}} \sum_{u \in \br{-1,1}^{\hat \ell}} \E*{\p*{\Delta
                         P_{\ell}^{u}}^{2}}
                     + \frac{1}{\N^{2\hat \ell}}
                     \sum_{u \in \br{-1,1}^{\hat \ell}}
                     \sum_{\substack{v \in \br{-1,1}^{\hat \ell} \\ u \neq v}}
                     \E{\Delta P_{\ell}^{u} \: \Delta P_{\ell}^{v}}.
  \]
  Using \cref{eq:est-assumpt-diag}, we have that
  \[
    {\N^{-2\hat \ell}} \sum_{u \in \br{-1,1}^{\hat \ell}}
    \E*{\p*{\Delta P_{\ell}^u}^{2}}
      ~=~      {\N^{-\hat \ell}} \E*{\p*{\Delta P_{\ell}}^{2}}
      ~\lesssim~ \N^{-\hat \ell} h_{\ell}^{\diagbeta}
      ~\lesssim~ h_{\ell}^{\diagbeta+1/\eta}.
    \]
  Let \(\abs{u \wedge v}_{0} = \maxp{\ell' \leq \minp{\abs{u}_{0}, \abs{v}_{0}} \; : \;
    u_{i} = v_{i} \text{ for all } i \in \br{1, \ldots, \ell'} }\), and note that two
  payoff differences \(\Delta P_{\ell}^{u}\) and \(\Delta P_{\ell}^{v}\) share a path up to
  time \(1{-}\tau_{\abs{u \wedge v}_{0}}\) and then the paths are independent and
  identically distributed after that. Hence, using
  \cref{eq:est-assumpt-cross}, we have
  \[
    \E{\Delta P_{\ell}^{u} \Delta P_{\ell}^{v}}
    = \E*{\p*{\E{\Delta P_{\ell} \given \mathcal F_{1-\tau_{\abs{u \wedge v}_{0}}}}}^{2}}%
                      \lesssim h_{\ell}^{\crossbeta} \p*{\tau_{\abs{u \wedge v}_{0}}}^{-p}.
  \]
  We can then evaluate the double sum as
  \[
    \begin{aligned}
      \frac{1}{\N^{2\hat \ell}}
        \sum_{u \in \br{-1,1}^{\hat \ell}}
        \sum_{\substack{v \in \br{-1,1}^{\hat \ell} \\ u \neq v}}
      \E{\Delta P_{\ell}^{u} \: \Delta P_{\ell}^{v}}
      &\lesssim
        \frac{h_{\ell}^{\crossbeta}}{\N^{2\hat \ell}}
        \sum_{u \in \br{-1,1}^{\hat \ell}}
        \sum_{\substack{v \in \br{-1,1}^{\hat \ell} \\ u \neq v}}
      \tau_{\abs{u \wedge v}_{0}}^{-p} \\
      & = \frac{h_{\ell}^{\crossbeta}}{\N^{2\hat \ell}} \sum_{\ell'=0}^{\hat \ell-1} \N^{2 \hat \ell-\ell'-1} \tau_{\ell'}^{-p} \\
      &\lesssim %
      h_{\ell}^{\crossbeta} \, \p*{ \sum_{\ell'=0}^{\hat \ell-1} \N^{\p*{\eta p-1}\ell'}}. \\
   \end{aligned}
  \]
 \detailed{Here, we evaluated the double sum by noting that \(\abs{u \wedge v}_{0}
   \in \br{0,\ldots, \hat \ell-1}\) when \(u,v \in \br{-1,1}^{\hat \ell}\) and \(u \neq v\).
   Then summing over these possible values, indexing by \(\ell'\), we count the
   number of possibilities of having \(u=v\) up to the \(\ell'\) index (this is
   \(2^{\ell'}\)) the \(\ell'\) being not equal (this is \(2\)), and then the rest
   of the indices \(\p{\hat \ell - \ell-1}\) being arbitrary (this is \(2^{\hat \ell +
     \ell' - 1}\) for \(u\) and similar for \(v\)). The result is
   \[
     \begin{aligned}
       \sum_{u \in \br{-1,1}^{\hat \ell}}
       \sum_{\substack{v \in \br{-1,1}^{\hat \ell} \\ u \neq v}}
       \p*{\tau_{\abs{u \wedge v}_{0}}}^{-p}
       &=
         \sum_{\ell'=0}^{\hat \ell-1} 2^{\ell' + 1} {2^{\hat \ell - \ell'-1}} {2^{\hat \ell - \ell'-1}} \tau_{\ell'}^{-p}
         = \sum_{\ell'=0}^{\hat \ell-1} {2^{2 \hat \ell - \ell'-1}} \tau_{\ell'}^{-p}.
     \end{aligned}
  \]
 }
  Bounding the sum based on the sign of \(\eta p{-}1\) concludes the proof.
  \begin{details}
    \[
      \begin{aligned}
        \sum_{\ell'=1}^{\hat \ell} \N^{\p*{\eta p-1}\ell'} &\lesssim
        \begin{cases}
          \Order{1} & \eta p < 1 \\
          \hat \ell & \eta p = 1 \\
          \N^{\p{\eta p - 1}\hat \ell} & \eta p > 1 \\
        \end{cases}\\
        &\lesssim
        \begin{cases}
          \Order{1} & \eta p < 1 \\
          \frac{1}{\eta}\log_2\p{h_{\ell}^{-1}} & \eta p = 1 \\
          h_{\ell}^{-p + 1/\eta} & \eta p > 1. \\
        \end{cases}
      \end{aligned}
    \]
  \end{details}
\end{proof}

\begin{remark}[Optimal \texorpdfstring{\(\eta\)}{eta}]\label{rem:optimal-eta}
  \cref{lem:estimator-workvar} shows that the choice of \(\eta\) in the estimator
  \(\mathcal P_{\ell}\) in \cref{def:est-main} compared to \(p\) is crucial to
  improving the variance convergence rate of the new estimator compared to
  \(\diagbeta\), the variance convergence rate of the simple estimator \(\Delta
  P\), without substantially increasing the cost of the new estimator. We can
  optimize the value of \(\eta\), by noting that work has the term \(
  \sum_{\ell'=0}^{\hat \ell-1} \N^{\ell'} \tau_{\ell'}, \) and the variance has the term \(
  \sum_{\ell'=0}^{\hat \ell-1} \N^{-\ell'} \tau_{\ell'}^{-p}. \) Hence, minimizing the work
  subject to a constrained variance yields the optimal value of \(\tau_{\ell'} \propto
  \N^{-\frac{2}{p+1} \ell'}\) and the optimal value of \(\eta\) is
  \(\frac{2}{p+1}\).
\end{remark}

\begin{remark}[Number of branches] In the estimator outlined above,
  \(\spellout{\N}\) branches are created at each branching point, \(1-\tau_{\ell'}\)
  for \(\ell'=0,\ldots, \hat\ell-1\). The method and analysis can be easily extended to
  allow for a different number of branches at every branching point. However,
  after adjusting $\eta$ to keep the total work constant this would not improve
  the variance convergence rate or the subsequent computational complexities
  that we later derive.
\end{remark}

\begin{corollary}\label{thm:mlmc-complexity-optimal-eta}
  Under \cref{ass:est-main}, an MLMC estimator with an MSE \(\varepsilon^2\) based on
  the branching estimator \(\Zest_{\ell}\) in \cref{def:est-main} with \(p <
  \maxp{2\alpha,1}\) and \(\eta=\frac{2}{p+1}\) and \(h_{\ell} = h_{0} M^{-\ell}\) for \(M \in
  \nset_{+}\) has the following computational complexity \[
      \begin{cases}
        \Order{\varepsilon^{-2-\maxp{0, \p*{1-p}/2 -\diagbeta}/\alpha} \, \abs{\log \varepsilon}^{2\I{2 \diagbeta=1-p}}}
        & p < \minp{1, 2\p*{\crossbeta-\diagbeta} - 1}, \\
        \Order{\varepsilon^{-2-\maxp{0, 1-\crossbeta}/\alpha} \, \abs{\log \varepsilon}^{2\I{\crossbeta=1}}}
        & 2\p*{\crossbeta-\diagbeta} - 1 \leq p < 1, \\
        \Order{\varepsilon^{-2-\maxp{0, 1-\crossbeta}/\alpha} \: \abs{\log \varepsilon}^{2\I{\crossbeta = 1} + 2\I{\crossbeta \leq 1}}}
        & p = 1,\\
        \Order{\varepsilon^{-2-\maxp{0, p-\crossbeta}/\alpha} \: \abs{\log \varepsilon}^{2\I{\crossbeta=p}}}
        & p > 1.
      \end{cases}
    \]
\end{corollary}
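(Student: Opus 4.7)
The plan is to read effective work and variance exponents off \cref{lem:estimator-workvar} by setting $\eta = 2/(p+1)$ and then to plug them into the standard MLMC complexity formula \cite{giles:acta} together with the bias rate $\alpha$ of \cref{ass:bias}. Since $1/\eta = (p+1)/2$ and $\eta p = 1$ iff $p = 1$, a direct case analysis of \cref{lem:estimator-workvar} yields effective rates
\[
\gamma = \max\!\bigl(1, \tfrac{p+1}{2}\bigr), \qquad \beta = \min\!\bigl(\diagbeta + \tfrac{p+1}{2},\ \crossbeta - \max(0, \tfrac{p-1}{2})\bigr),
\]
together with extra $\ell$ factors in both $W_\ell$ and in the $\crossbeta$-term of $V_\ell$ precisely at the boundary $p = 1$.

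I will organize the remainder of the argument by the sign of $p - 1$ and by which term of the $\min$ defining $\beta$ dominates. For $p < 1$ (so $\gamma = 1$), the first term dominates iff $p < 2(\crossbeta - \diagbeta) - 1$, producing regime~1 with $\gamma - \beta = (1-p)/2 - \diagbeta$; otherwise $\beta = \crossbeta$ and $\gamma - \beta = 1 - \crossbeta$, which is regime~2. Regime~3 ($p = 1$) matches regime~2 in leading exponents but carries additional $\ell$ factors. For $p > 1$, if the first term of the $\min$ were active we would have $\beta \geq \gamma$ and a strictly better $\varepsilon^{-2}$ bound; otherwise $\beta = \crossbeta - (p-1)/2$ and $\gamma - \beta = p - \crossbeta$, which is regime~4. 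In each case I will plug $(\beta, \gamma, \alpha)$ into the standard formula $\textnormal{cost}\lesssim \varepsilon^{-2}\bigl(\sum_{\ell=0}^L \sqrt{V_\ell W_\ell}\bigr)^2$ with $L \sim \alpha^{-1}|\log\varepsilon|$ fixed by \cref{ass:bias}; this gives the rate $\varepsilon^{-2-\max(0,\gamma-\beta)/\alpha}$ with the usual $|\log\varepsilon|^2$ inflation when $\beta = \gamma$, which is exactly what produces the indicators $\I{2\diagbeta = 1-p}$, $\I{\crossbeta = 1}$ and $\I{\crossbeta = p}$ in regimes~1, 2 and 4 respectively. The hypothesis $p < \max(2\alpha, 1)$ combined with $\alpha \geq \diagbeta/2$ is exactly what is needed to ensure that the level count $L \sim \alpha^{-1}|\log\varepsilon|$ does not inflate the stated rate in any regime.

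The main obstacle will be the careful bookkeeping of logarithmic factors in regime~3 ($p = 1$), where both $W_\ell$ and the dominant term of $V_\ell$ acquire an extra $\ell$. For $\crossbeta < 1$ the summand $\sqrt{V_\ell W_\ell}$ behaves like $\ell\, h_\ell^{(\crossbeta-1)/2}$, the sum is dominated by its top term, and its square scales as $L^2 h_L^{\crossbeta-1} \sim |\log\varepsilon|^2 \varepsilon^{-(1-\crossbeta)/\alpha}$, accounting for the $2\I{\crossbeta \leq 1}$ contribution. For $\crossbeta = 1$ the summand degenerates to $\ell$, so the squared sum becomes $L^4 \sim |\log\varepsilon|^4$, producing the additional $2\I{\crossbeta = 1}$. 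For $\crossbeta > 1$ the sum converges and the extra $\ell$ factors are absorbed, leaving the plain $\varepsilon^{-2}$ bound. Beyond this log tracking in the $p=1$ case (and at the $\beta = \gamma$ boundaries), the proof reduces to a routine application of the standard MLMC complexity formula to the rates derived from \cref{lem:estimator-workvar}.
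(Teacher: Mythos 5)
Your proposal follows essentially the same route as the paper's proof: invoke the standard MLMC work bound $\varepsilon^{-2}\bigl(\sum_{\ell}\sqrt{W_\ell V_\ell}\bigr)^2$ with $L\sim\alpha^{-1}|\log\varepsilon|$, read $W_\ell$ and $V_\ell$ off \cref{lem:estimator-workvar} at $\eta=2/(p+1)$, and case-split on $p$ versus $1$ and on which term of $W_\ell V_\ell$ dominates, with the same boundary-case log bookkeeping (including the $L^4$ degeneration at $p=\crossbeta=1$). The argument and the resulting exponents match the paper's; no gap.
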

Since \(\crossbeta \geq \diagbeta\) as in \cref{ass:est-main}, the computational
complexity of an MLMC estimator based on \(\Zest_{\ell}\) is lower than that of
an MLMC estimator based on \(\Delta P_{\ell}\), the latter being
\(\Order{\varepsilon^{-2-\maxp{1-\diagbeta, 0}/\alpha} \abs{\log\p{\varepsilon}}^{2
    \I{\diagbeta=1}}}\), whenever \(p < 1+\crossbeta-\diagbeta\).
\begin{proof}
  Recall that the MLMC estimator is defined as
  \[
    \sum_{\ell=0}^{L} \frac{1}{N_{\ell}} \sum_{m=1}^{N_{\ell}} \Zest_{\ell}^{\p{m}},
  \]
  where \(\Zest_{\ell}^{\p{m}}\) are independent samples of \(\Zest_{\ell}\). For
  \(V_{\ell} \defeq \var{\Zest_{\ell}}\) and \(W_{\ell} \defeq
  \textnormal{Work}\p{\Zest_{\ell}}\) the total work of the MLMC estimator for a
  fixed \(L\) is \( \sum_{\ell=0}^{L} W_{\ell} N_{\ell} \) while the total variance is \(
  \sum_{\ell=0}^{L} V_{\ell} / N_{\ell} \). %
  Minimizing the work while constraining the variance to be less than
  \(\varepsilon^{2}/2\) leads to the optimal choice of number of samples on level \(\ell\)
  \cite{giles:acta},
  \[N_{\ell} = \en*\lceil \rceil{2 \varepsilon^{-2} \p{{V_{\ell}}/{W_{\ell}}}^{1/2} \: \p*{\sum_{\ell=0}^{L}
        \p{W_{\ell} V_{\ell}}^{1/2}}}, \]
  so that the total work is bounded by
  \[
    2 \, \varepsilon^{-2}\p*{\sum_{\ell=0}^{L} \p{W_{\ell} V_{\ell}}^{1/2}}^{2} + \sum_{\ell=0}^{L} W_{\ell} \ .
  \]
  The sum \(\sum_{\ell=0}^{L} W_{\ell} \lesssim M^{L\p{1+\maxp{p-1,0}/2}} L^{\I{p=1}}\) is
  dominated by the first term when setting \(L \propto \frac{1}{\alpha}\log{\varepsilon^{-1}}\),
  such that the bias in \cref{ass:bias} is \(\Order{\tol}\), and when \(p <
  \maxp{2\alpha, 1}\) and for sufficiently small \(\varepsilon\). By
  \cref{lem:estimator-workvar}, for \(\eta=\frac{2}{p+1}\), we have
  \begin{details}
    \[
      W_{\ell} \lesssim \begin{cases}
                {h_{\ell}^{-1-\maxp{p-1,0}/2}} & p \neq 1\\
                {\ell \, h_{\ell}^{-1} } & p=1\\
              \end{cases}\]
    while
    \[
      V_{\ell} \lesssim \
      \begin{cases}
        {h_{\ell}^{\diagbeta+\p{p+1}/2} +
        h_{\ell}^{\crossbeta-\maxp{p-1, 0}/2}} & p \neq 1\\
        {h_{\ell}^{\diagbeta+1} + \ell \, h_{\ell}^{\crossbeta} } &p = 1
      \end{cases}
    \]
    Hence,
  \end{details}
  \[
    W_{\ell}V_{\ell} \lesssim \begin{cases}
                   {h_{\ell}^{\diagbeta
                   +\p*{p-1}/2
                   } + h_{\ell}^
                   {\crossbeta
                   -1}
                   }
                   & p < 1,\\
                   \ell {h_{\ell}^{\diagbeta} + \ell^{2} \, h_{\ell}^{\crossbeta-1} }
                   & p=1,%
                   \\
                   {h_{\ell}^{\diagbeta} + h_{\ell}^{\crossbeta -p}}
                   & p > 1.\\
    \end{cases}
  \]
  Evaluating the sum for \(h_L \approx \eps^{1/\alpha}\), which implies \(L \propto \frac{1}{\alpha}
  \log \eps^{-1}\), yields the result.
  \begin{details}
    Using that \(h_{L} < h_{0}\) and \(\diagbeta>0\),
    \[
    \p*{\sum_{\ell=0}^{L} \sqrt{W_{\ell} V_{\ell}}}^{2} \lesssim
    \begin{cases}
      L^{2\I{2 \diagbeta=1-p}} \, h_{L}^{\diagbeta+\p*{p-1}/2} +
      L^{2\I{\crossbeta=1}} \, h_{L}^{\crossbeta-1} & p < 1 \\
      L^{2 \I{\crossbeta=1} + 2 \I{p=1, \crossbeta \leq 1}} \, h_{L}^{\crossbeta-1} & p = 1\\
      L^{2\I{\crossbeta=p}} h_{L}^{\crossbeta-p} & p > 1
    \end{cases}
  \]
  where the hidden constant is independent of \(L\). Setting \(L \propto -\log\p*{\varepsilon}/\alpha\)
  \[
    \p*{\sum_{\ell=0}^{L} \sqrt{W_{\ell} V_{\ell}}}^{2} \lesssim
    \begin{cases}
      \varepsilon^{-\maxp{0, 1-\crossbeta}} \, \abs{\log \varepsilon}^{2\I{\crossbeta=1}}
      + \varepsilon^{-\maxp{0, \p*{1-p}/2 -\diagbeta}} \, \abs{\log \varepsilon}^{2\I{2 \diagbeta=1-p}}  \,& p < 1 \\
      \varepsilon^{-\maxp{0, p-\crossbeta}} \: \abs{\log \varepsilon}^{2\I{p=1} + 2\I{\crossbeta=p}}  & p \geq 1
    \end{cases}
  \]

    The result is
    \[
      \begin{cases}
        \varepsilon^{-\maxp{0, \p*{1-p}/2 -\diagbeta}} \, \abs{\log \varepsilon}^{2\I{2 \diagbeta=1-p}}
        \,& p < \minp{1, 2\p*{\crossbeta-\diagbeta} - 1} \\
        \varepsilon^{-\maxp{0, 1-\crossbeta}} \, \abs{\log \varepsilon}^{2\I{\crossbeta=1}}
        \,& 2\p*{\crossbeta-\diagbeta} - 1 \leq p < 1 \\
        \varepsilon^{-\maxp{0, 1-\crossbeta}} \: \abs{\log \varepsilon}^{2\I{\crossbeta=1}  + 2 \I{\crossbeta \leq 1}}  & p = 1 \\
        \varepsilon^{-\maxp{0, p-\crossbeta}} \: \abs{\log \varepsilon}^{2\I{\crossbeta=p}}  & p > 1
      \end{cases}
    \]
    Note: When \(p=1\), we get an extra 2 in the log only when \(\crossbeta=1\)
  \end{details}
\end{proof}

\begin{remark}\label{rem:mlmc-complexity-eta=1}
  The simple case \(\eta = 1\) is optimal only when \(p=1\). In other cases, the
  computational complexity increases slightly compared to
  \cref{thm:mlmc-complexity-optimal-eta}:
  \[
    \begin{cases}
      \Order{\varepsilon^{-2 - \maxp{1-\crossbeta, 0}/\alpha} \, \abs{\log\p{\varepsilon}}^{2 \I{\crossbeta=1} + \I{\crossbeta \leq 1}} }
      & p < 1, \\
      \Order{\varepsilon^{-2 - \maxp{1-\crossbeta, 0}/\alpha} \, \abs{\log\p{\varepsilon}}^{2 \I{\crossbeta=1} + 2 \I{\crossbeta \leq 1}}}
      & p = 1, \\
      \Order{\varepsilon^{-2 - \maxp{p-\crossbeta, 0}/\alpha} \, \abs{\log\p{\varepsilon}}^{2 \I{\crossbeta=p} +  \I{\crossbeta \leq p}} }
      &  p > 1.
    \end{cases}
  \]
\end{remark}
\begin{details}
  \begin{proof}

    \( \textnormal{Work}\p{\Zest_{\ell}} \lesssim {\ell \, h_{\ell}^{-1} } \) and
    \[
      \var{\Zest_{\ell}} \lesssim \:
      \begin{cases}
        {h_{\ell}^{\diagbeta+1} + h_{\ell}^{\crossbeta -\maxp{0,p-1}}} & p \neq 1\\
        {h_{\ell}^{\diagbeta+1} + \ell \, h_{\ell}^{\crossbeta} } & p = 1\\
      \end{cases}
    \]
    Hence,
    \[
      W_{\ell}V_{\ell} \lesssim
      \begin{cases}
        \ell {h_{\ell}^{\diagbeta} + \ell h_{\ell}^{\crossbeta-1}} & p < 1\\
        \ell h_{\ell}^{\diagbeta} + \ell^{2} \, h_{\ell}^{\crossbeta-1} & p = 1\\
        \ell {h_{\ell}^{\diagbeta} + \ell h_{\ell}^{\crossbeta -p}} & p > 1\\
      \end{cases}.
    \]
    which leads to
    \[
      \p*{\sum_{\ell=0}^{L} \sqrt{W_{\ell}V_{\ell}}}^{2} \lesssim
      \begin{cases}
        {\varepsilon^{- \maxp{1-\crossbeta, 0}/\alpha} \, \abs{\log\p{\varepsilon}}^{2 \I{\crossbeta=1} + \I{\crossbeta \leq 1}} }
        & p < 1 \\
        {\varepsilon^{- \maxp{1-\crossbeta, 0}/\alpha} \, \abs{\log\p{\varepsilon}}^{2 \I{\crossbeta=1} + 2 \I{\crossbeta \leq 1}}}
        & p = 1 \\
        {\varepsilon^{- \maxp{p-\crossbeta, 0}/\alpha} \, \abs{\log\p{\varepsilon}}^{2 \I{\crossbeta=p} +  \I{\crossbeta \leq p}} }
        &  p > 1
      \end{cases}
    \]
  \end{proof}
\end{details}
 \section{Strong Analysis}\label{sec:strong}%
\def\tradestlabel{Without branching}
\def\newestlabel{With branching}

In this section, we consider
\begin{equation}\label{eq:strong-estimator}
  \Delta P_{\ell} \equiv \I{\X[\ell]{1} \in S} - \I{\X[\ell-1]{1}\in S},
\end{equation}
and make well-motivated assumptions on the solution of the SDE in
\cref{eq:sde} and its numerical approximation \(\br{\X[\ell]{t}}_{0 \le t \le 1}\),
and then show that our main \cref{ass:est-main} follows from these. We then
present the results of several numerical
experiments. %
For a set \(J \subset \rset^{d}\), define the distance of \(x\) to \(J\) as
\begin{equation}\label{eq:dist-def}
  \dist[J]{x} \defeq \inf_{y\in{J}} \norm{y-x}.
\end{equation}

\begin{assumption}\label{ass:cond-density}
  Assume that for some \(\delta_{0}>0\) and all \(0<\delta\le\delta_{0}\) and \(\tau \in (0, 1]\),
  there is a constant \(C\) independent of \(\delta, \tau\) and \(\mathcal F_{1-\tau}\)
  such that
  \begin{equation}\label{eq:ass-cond-density}
    \E*{\p[\big]{\prob{\dist{\X{1}} \leq \delta \given \mathcal F_{1-\tau}}}^{2}} \leq C \: \frac{\delta^{2}}{\tau^{1/2}}.
  \end{equation}
\end{assumption}

\cref{ass:cond-density} is fairly generic and depends on the set \(K \equiv \partial S\)
and the conditional density of \(\X{1}\) given the filtration \(\mathcal
F_{1-\tau}\). It is motivated by the case when \(\p{X_{t}}_{t \geq 0}\) is a
\(d\)-dimensional Wiener process and we prove it in
\cref{thm:cond-density-sde} (and \cref{thm:cond-density-sde-gbm}) for
solutions (and exponentials of solutions) to uniformly elliptic SDEs.

\begin{theorem}\label{thm:strong-conv-var-rates}
  Let \cref{ass:cond-density} hold and assume that there is \(q \geq 1\) and
  \(\beta>0\) such that
  \begin{equation}\label{ass:strong-conv}
    \E*{\norm{\X{1} - \X[\ell]{1}}^{q}}^{1/q} \lesssim h_{\ell}^{\beta/2}.
  \end{equation}
  Then for \(\Delta P_{\ell}\) in \cref{eq:strong-estimator} and all \(\tau \in \p{0,1}\),
  \begin{subequations}
    \begin{align}
      \label{eq:strong-diag}\E{ \p{\Delta P_{\ell}}^{2} } &\lesssim h_{\ell}^{\beta\p{1-1/\p{q+1}}/2}\\
      \label{eq:strong-cross} \textrm{and}\qquad \E*{ \p*{\E{\Delta P_{\ell} \given
      \mathcal F_{1-\tau}}}^{2}} & \lesssim h_{\ell}^{\beta \p{1-2/\p{q+2}}} / \tau^{1/2}.
    \end{align}
  \end{subequations}
\end{theorem}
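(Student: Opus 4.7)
The plan rests on a single geometric observation followed by a threshold split. Note that $(\Delta P_\ell)^2 = |\Delta P_\ell|$ since $\Delta P_\ell \in \{-1,0,1\}$, and $\Delta P_\ell \neq 0$ forces $\X[\ell]{1}$ and $\X[\ell-1]{1}$ to lie on opposite sides of $K = \partial S$. Since $\X{1}$ must agree in $S$-membership with at least one of them, the segment from $\X{1}$ to the other endpoint crosses $K$, giving
\[
  \dist{\X{1}} \;\le\; E_\ell \;\defeq\; \norm{\X[\ell]{1}-\X{1}} + \norm{\X[\ell-1]{1}-\X{1}}.
\]
Hence $|\Delta P_\ell| \le \I{\dist{\X{1}} \le E_\ell}$, and Minkowski together with \cref{ass:strong-conv} yields $\E{E_\ell^q}^{1/q} \lesssim h_\ell^{\beta/2}$. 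For any $\delta>0$ I would then use the elementary split
\[
  \I{\dist{\X{1}} \le E_\ell} \;\le\; \I{\dist{\X{1}} \le \delta} + \I{E_\ell > \delta}.
\]

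For the diagonal bound \cref{eq:strong-diag}, taking expectations, the first term is $\E{\I{\dist{\X{1}} \le \delta}} \lesssim \delta$ by Cauchy--Schwarz applied to \cref{ass:cond-density} with $\tau=1$, while the second is $\lesssim h_\ell^{\beta q/2}\delta^{-q}$ by Markov. Balancing with the optimal $\delta \propto h_\ell^{\beta q/(2(q+1))}$ yields exactly the target $h_\ell^{\beta q/(2(q+1))} = h_\ell^{\beta(1-1/(q+1))/2}$.

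For the cross bound \cref{eq:strong-cross}, I would apply the same split inside $\E{\cdot \given \mathcal F_{1-\tau}}$, square the result, and use $(a+b)^2 \le 2a^2+2b^2$. \cref{ass:cond-density} controls the first piece directly by $\lesssim \delta^2/\tau^{1/2}$. For the second I use the crude inequality $\p{\E{\I{E_\ell>\delta}\given \mathcal F_{1-\tau}}}^2 \le \E{\I{E_\ell>\delta}\given \mathcal F_{1-\tau}}$ together with the tower property and Markov to obtain $\lesssim h_\ell^{\beta q/2}\delta^{-q}$. Choosing $\delta \propto \p{h_\ell^{\beta q/2}\tau^{1/2}}^{1/(q+2)}$ balances both contributions at $h_\ell^{\beta q/(q+2)}\tau^{-q/(2(q+2))}$, which, because $q/(2(q+2)) \le 1/2$ and $\tau\in(0,1)$, is majorised by the target $h_\ell^{\beta(1-2/(q+2))}/\tau^{1/2}$.

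The only genuinely new ingredient is the geometric bound $\dist{\X{1}} \le E_\ell$; the remainder is a routine threshold-and-optimise argument. The main point of caution is verifying that the optimal $\delta$ lies in $(0,\delta_0]$ as required by \cref{ass:cond-density}, which holds automatically for sufficiently small $h_\ell$, while for large $h_\ell$ both claimed bounds follow trivially from $|\Delta P_\ell|\le 1$.
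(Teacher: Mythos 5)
Your proposal is correct and follows essentially the same route as the paper's proof: the same geometric observation that a nonzero $\Delta P_{\ell}$ forces $\dist{\X{1}}$ to be at most the discretization error, the same split with a threshold $\delta$ handled by \cref{ass:cond-density} on one side and Markov's inequality on the $q$-th moment on the other, and the same balancing of $\delta$ (your choice of $\delta$ for the cross term absorbs the $\tau$ factor, but after majorising $\tau^{-q/(2(q+2))}\le\tau^{-1/2}$ you land on the identical bound). The only cosmetic differences are that you merge the two indicator differences into a single event via the triangle inequality and use $p^2\le p$ on the conditional probability where the paper halves the Markov exponent before conditioning.
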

This theorem shows that \cref{ass:est-main} is satisfied with
\(\crossbeta {=} \beta \p{1{-}2/\p{q{+}2}}
\),\\ \(\diagbeta{=} \beta\p{1{-}1/\p{q{+}1}}/2
\) and \(p{=}1/2\). Under {standard conditions} on the coefficients of the
SDE, assumption \cref{ass:strong-conv} is satisfied for the Euler-Maruyama and
Milstein numerical schemes for \(\beta=1\) and \(\beta=2\), respectively, and any \(q
\geq 1\) \cite{kloden:numsde}.
\begin{details}
  Note that \(2\p{\crossbeta-\diagbeta} = \beta\p{1- \frac{4}{q+2} +
    \frac{1}{q+1}}\). Hence \(\crossbeta > \diagbeta\) whenever \({1-
    \frac{4}{q+2} + \frac{1}{q+1}} > 0\), i.e., when \(q \geq 1\).
\end{details}
\begin{proof}
  The proof of \eqref{eq:strong-diag} is similar to the proof in
  \cite{giles:discont-payoff}, and is included here for completeness. We start
  by noting that
  \[
      \E{\p{\Delta P_{\ell}}^{2}}
      \leq \E*{\abs*{ \I{\X{1} \in S} - \I{\X[\ell-1]{1} \in S}}}
    + \E*{\abs*{ \I{\X{1} \in S} - \I{\X[\ell]{1} \in S}}},
  \]
  and for any \(\delta>0\)
  \[
    \begin{aligned}
      \abs{\I{\X{1} \in S} - \I{\X[\ell]{1} \in S}}
      &\le \I{\norm{\X{1} -
        \X[\ell]{1}} > \dist{\X{1}} }\\
      &\le \I{\norm{\X{1} - \X[\ell]{1}} > \dist{\X{1}} } \I{\dist{\X{1}}
        \leq
        \delta} + \I{\norm{\X{1} - \X[\ell]{1}} > \dist{\X{1}}> \delta}\\
      &\le \I{\dist{\X{1}} \leq \delta} %
        + \I{\norm{\X{1} - \X[\ell]{1}} > \delta}\\
      &\le \I{\dist{\X{1}} \leq \delta} + %
        \p{h_{\ell}^{-\beta/2}\delta}^{-q} \p*{h_{\ell}^{-\beta/2}\norm{\X{1}-\X[\ell]{1}}}^{q}.
    \end{aligned}
  \]
  Hence
  \[
    \begin{aligned}
      \E*{\abs*{ \I{\X{1} \in S} - \I{\X[\ell]{1} \in S}}}
      &\leq \prob{\dist{\X{1}} \leq
        \delta}
        + \p{h_{\ell}^{\beta/2}\delta}^{-q} \E*{\p*{h_{\ell}^{\beta/2}\norm{\X{1}-\X[\ell]{1}}}^{q}}\\
      &\leq C\, \delta + \p{h_{\ell}^{-\beta/2}\delta}^{-q}
        \E*{\p*{h_{\ell}^{-\beta/2}\norm{\X{1}-\X[\ell]{1}}}^{q}},
    \end{aligned}
  \]
  where we used~\cref{ass:cond-density} for \(\tau=1\) and the fact that
  \(\E*{\p*{h_{\ell}^{-\beta/2}\norm{\X{1}-\X[\ell]{1}}}^{q}}\) is bounded by
  \cref{ass:strong-conv}. A similar bound is obtained for \(\E*{\abs*{
      \I{\X{1} \in S} - \I{\X[\ell-1]{1} \in S}}}\) and then we select \(\delta =
  h_{\ell}^{\frac{q}{q+1}\beta/2}\) to obtain \eqref{eq:strong-diag}.

  Proving \eqref{eq:strong-cross}
  follows the same steps by similarly noting that
  \begin{multline*}
      \E*{ \p*{\E{\Delta P_{\ell} \given \mathcal F_{1-\tau}}}^{2}}
      \leq
      2 \, \E*{ \p*{\E*{ \abs*{ \I{\X{1} \in S} - \I{\X[\ell]{1} \in S}}\given
            \mathcal F_{1-\tau}}}^{2}} \\
      + 2 \, \E*{ \p*{\E*{ \abs*{ \I{\X{1} \in S} - \I{\X[\ell-1]{1} \in S}}\given
            \mathcal F_{1-\tau}}}^{2}},
  \end{multline*}
  and for any \(\delta>0\)
  \[
      \abs{\I{\X{1} \in S} - \I{\X[\ell]{1} \in S}}
      \le \I{\dist{\X{1}} \leq \delta}
      + \p{h_{\ell}^{-\beta/2}\delta}^{-q/2} \p*{h_{\ell}^{-\beta/2}\norm{\X{1}-\X[\ell]{1}}}^{q/2},
  \]
  so that
  \begin{multline*}
      \E*{ \p*{\E{\I{\X{1} \in S} - \I{\X[\ell]{1} \in S}\given
        \mathcal F_{1-\tau}}}^{2}}
      \leq
      2\, \E{\p{\E{\I{\dist{\X{1}} \leq \delta} \given \mathcal F_{1-\tau} }}^{2}}\\
      + 2\,\p{h_{\ell}^{-\beta}\delta^{2}}^{-q/2} \, \E*{\p*{h_{\ell}^{-\beta/2}
        \norm{\X{1}-\X[\ell]{1}}}^{q} }.
  \end{multline*}
  Here \(\E*{\p*{h_{\ell}^{-\beta/2}\norm{\X{1}-\X[\ell]{1}}}^{q}}\) is bounded by
  \cref{ass:strong-conv}, and using \cref{ass:cond-density} we have
  \[
    \begin{aligned}
      \begin{aligned}
        \E*{ \p*{\E{\I{\X{1} \in S} - \I{\X[\ell]{1} \in S}\given \mathcal
        F_{1-\tau}}}^{2}}
        &\lesssim \delta^{2}/\tau^{1/2} + \p{h_{\ell}^{-\beta}\delta^{2}}^{-q/2}\\
        &\lesssim \delta^{2}/\tau^{1/2} + \p{h_{\ell}^{-\beta}\delta^{2}}^{-q/2} /\tau^{1/2},
      \end{aligned}
    \end{aligned}
  \]
  for \(\tau {<} 1\) and we choose \( \delta^{2} {=} h_{\ell}^{\frac{q}{q+2} \beta} \) to
  obtain the result.
\end{proof}

As a direct implication of
\cref{thm:mlmc-complexity-optimal-eta,thm:strong-conv-var-rates}, we have the
following result
\begin{corollary}[MLMC Computational
  Complexity]\label{thm:mlmc-complexity-eta1/2}
  Let \cref{ass:cond-density} and \cref{ass:strong-conv} be satisfied for all
  \(q \geq 2\). When \(\beta \leq 1\), assume further that the bias is bounded according
  to \cref{ass:bias} for some \(\alpha\). Then, the computational complexity of a
  MLMC estimator with MSE \(\varepsilon^2\) based on \(\Zest_{\ell}\) and \(h_{\ell} = h_{0}
  M^{-\ell}\) for \(M \in \nset_{+}\) and \(\eta = 4/3\) is
  \[
    \begin{cases}
      \Order{\varepsilon^{-2 -\frac{1-\beta}{\alpha} - \nu}} & \beta \leq 1,\\
      \Order{\varepsilon^{-2}} & \beta > 1,
    \end{cases}
  \]
  for any \(\nu > 0\).
\end{corollary}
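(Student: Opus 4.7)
The plan is to derive the corollary as a direct application of \cref{thm:mlmc-complexity-optimal-eta} fed by \cref{thm:strong-conv-var-rates}, using the strong‑convergence exponent $q$ as a tuning parameter. For each $q\ge 2$, \cref{thm:strong-conv-var-rates} shows that \cref{ass:est-main} is satisfied with
\[
\diagbeta = \tfrac{\beta}{2}\bigl(1-\tfrac{1}{q+1}\bigr), \qquad \crossbeta = \beta\bigl(1-\tfrac{2}{q+2}\bigr), \qquad p = \tfrac{1}{2},
\]
and the choice $\eta = 4/3 = 2/(p+1)$ coincides with the optimum identified in \cref{rem:optimal-eta}. The bias hypothesis~\cref{ass:bias} is imposed by assumption when $\beta\le 1$; for $\beta>1$ it follows from
$\abs*{\E{\I{\X[\ell]{1}\in S} - \I{\X{1}\in S}}} \le \E*{\abs*{\I{\X[\ell]{1}\in S} - \I{\X{1}\in S}}} \lesssim h_\ell^{\diagbeta}$,
a bound already established inside the proof of \cref{thm:strong-conv-var-rates}, so one may take $\alpha=\diagbeta$. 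Moreover $p<\maxp{2\alpha,1}$ is automatic since $p=1/2<1$.

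Because $p=1/2<1$, only the first two cases of \cref{thm:mlmc-complexity-optimal-eta} can occur. The dividing line $2(\crossbeta-\diagbeta)-1$ tends to $\beta-1$ as $q\to\infty$, which places us in case~2 whenever $\beta\le 3/2$ and $q$ is large enough, giving complexity
\[
\Order*{\varepsilon^{-2 - \maxp{0,\,1-\crossbeta}/\alpha}\:\abs{\log\varepsilon}^{2\I{\crossbeta=1}}}.
\]
For $\beta\le 1$ one has $\crossbeta<1$ strictly at every finite $q$, killing the logarithmic factor, and $1-\crossbeta=(1-\beta)+2\beta/(q+2)$. Given $\nu>0$, choosing $q$ so that $2\beta/\bigl((q+2)\alpha\bigr)<\nu$ then yields the claimed $\Order{\varepsilon^{-2-(1-\beta)/\alpha-\nu}}$. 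For $1<\beta\le 3/2$, $q$ large enough forces $\crossbeta>1$ so the exponent collapses to $\Order{\varepsilon^{-2}}$. For $\beta>3/2$ the regime shifts to case~1 of \cref{thm:mlmc-complexity-optimal-eta}, where $\diagbeta\to\beta/2>1/4=(1-p)/2$ likewise makes the exponent vanish, again yielding $\Order{\varepsilon^{-2}}$.

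The only subtle point is keeping the indicator functions that switch on the $\abs{\log\varepsilon}$ factors inactive. The equalities $\crossbeta=1$ and $2\diagbeta=1-p$ each single out at most one exceptional value of $q$ for any given $\beta$, so a sufficiently large and generic $q$ avoids both and no spurious logarithms survive. Beyond this bookkeeping, the argument is mechanical substitution into \cref{thm:mlmc-complexity-optimal-eta}; the remaining work is simply to verify the promised inequalities on the exponents in each of the four sub‑regimes $\beta\le 1$, $1<\beta\le 3/2$ and $\beta>3/2$.
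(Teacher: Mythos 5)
Your proposal is correct and follows essentially the same route as the paper: substitute $p=1/2$ and the $q$-dependent values of $\diagbeta,\crossbeta$ from \cref{thm:strong-conv-var-rates} into \cref{thm:mlmc-complexity-optimal-eta}, then let $q$ be large to push the exponents to their limits in the regimes $\beta\le 1$, $1<\beta\le 3/2$, $\beta>3/2$. Your additional remarks on the bias bound for $\beta>1$ and on avoiding the boundary equalities that trigger logarithmic factors are consistent with (and slightly more explicit than) the paper's argument.
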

\begin{details}
  \begin{proof}%
    Substitute the value \(p=1/2\) in \cref{thm:mlmc-complexity-optimal-eta}
    \[
      \begin{cases}
        \Order{\varepsilon^{-2-\maxp{0, 1/4 -\diagbeta}/\alpha} \, \abs{\log \varepsilon}^{2\I{4 \diagbeta=1}}}
        & 3/2 < 2\p{\crossbeta-\diagbeta}  \\
        \Order{\varepsilon^{-2-\maxp{0, 1-\crossbeta}/\alpha} \, \abs{\log \varepsilon}^{2\I{\crossbeta=1}}}
        & 2 \p{\crossbeta-\diagbeta} \leq 3/2 \\
      \end{cases}
    \]

    Note that \(2\p{\crossbeta-\diagbeta} = \beta\p{1- \frac{4}{q+2} +
      \frac{1}{q+1}}\). Hence if \(\beta>3/2\), then there is \(q\) large enough
    such that the first case applies. In this case, for a sufficiently large
    \(q\), \(\diagbeta > 1/4\) and we arrive at \(\varepsilon^{-2}\) computational
    complexity. For \(1 < \beta<3/2\), we are in the second case but we can find a
    sufficiently large \(q\) for which \(\crossbeta > 1\), yielding
    \(\varepsilon^{-2}\). When \(\beta\leq1\), we are in the second case with increased
    complexity.
  \end{proof}
\end{details}

\subsection*{Numerical Experiments}\label{sec:strong-num}

In this section, we consider the SDE for the Geometric Brownian Motion (GBM)
\begin{equation} \label{eq:gbm}%
  \D X_{i, t} = \mu_{i} \, X_{i,t} \D t + \sigma_{i} \, X_{i,t} \p*{\rho \D W_{i, t} +
    \p*{1-\rho^{2}}^{1/2} \D W_{0, t}},%
\end{equation}%
for \(i=1, \ldots, d\). Here \(\br*{\p{W_{{i}, t}}_{t \geq 0}}_{i=0}^{d}\) are
independent Wiener processes. The processes \(\br*{\p{W_{{i}, t}}_{t \geq
    0}}_{i=1}^{d}\) model the idiosyncratic noise in the \(d\)--dimensional
system while \({\p{W_{0, t}}_{t \geq 0}}\) models the systematic noise in the
system. As an example, we compute \(\prob{\frac{1}{d}\sum_{i=1}^{d}X_{i,1} \le
  1}\)%
. We set \(\rho{=}0.7\) and \(X_{i, 0}{=}1,\, \mu_{i}{=}0.05,\, \sigma_{i}{=}0.2\) for
all \(i\in \br{1, \ldots, d}\). We approximate the path of \(\br{X_{i,t}}_{i=1}^{d}\)
using the Euler-Maruyama or the Milstein numerical schemes
\cite{kloden:numsde} and set the time step size at level \(\ell\) as \(h_{\ell} =
2^{-\ell-1}\) and use the new branching estimator in \cref{def:est-main} with
\(\eta=1\) along with a traditional estimator without branching. Note that this
sequence of time steps sizes is not optimal and other choices such as \(h_{\ell}
\propto 4^{-\ell}\) would lead to better computational performance for both the
branching and non-branching estimators, see
\cite{giles:MLMC,abdullatif:meshMLMC} for further analysis. We choose this
sub-optimal sequence as it produces more data points in the plots below and
makes inferring the computational complexity and convergence trends easier.

\cref{fig:tau-conv-gbm-1d} shows the convergence of \(\E{\p{\E*{\Delta P_{\ell} \given
      \mathcal F_{1-\tau}}}^{2}}\) for an Euler-Maruyama approximation
for \(d{=}1, 2, 3\), verifying \cref{ass:est-main} for \(p=1/2\) as shown by
\cref{thm:strong-conv-var-rates}.
\Cref{fig:Vl-gbm-1d,fig:Wl-gbm-1d} confirm the claims of
\cref{lem:estimator-workvar}. We only show the results for \(d{=}1\) as the
numerical results for \(d{>}1\) show similar rates for the work and variance
convergence when using Euler-Maruyama. Recall that for the example in
\cref{eq:gbm}, by \cref{thm:strong-conv-var-rates}, when
\cref{ass:strong-conv,ass:cond-density} are satisfied as we argued above, we
have \(\diagbeta {\approx} \beta/2\) and \(\crossbeta {\approx} \beta\), hence \(\var{ \Zest_{\ell}}
\approx \Order{h_{\ell}^{\minp{\beta, \beta/2 + 1}}}\) where \(\beta{=}1\) for Euler-Maruyama and
\(\beta{=}2\) for the Milstein numerical scheme.
\Cref{fig:total-work-gbm-1d} shows the total work estimate of a MLMC sampler
based on \(\Zest_{\ell}\). %
For the previous values of \(\crossbeta, \diagbeta\) and \(\beta\) and \(p=1/2,
\eta=1\), the computational complexity for the MLMC sampler based on
\(\Zest_{\ell}\) is \(\Order{\varepsilon^{-2}\abs{\log\varepsilon}^{3}}\) when using Euler-Maruyama
and \(\Order{\varepsilon^{-2}}\) when using Milstein, c.f.,
\cref{rem:mlmc-complexity-eta=1}. As discussed in \cref{rem:optimal-eta} and
\cref{lem:estimator-workvar}, in theory choosing \(\eta{=}4/3\) when \(p=1/2\)
leads to smaller computational complexity than \(\eta{=}1\); in particular the
computational complexity of MLMC when using the branching estimator with an
Euler-Maruyama scheme is
\(\Order{\varepsilon^{-2}\abs{\log\varepsilon}^{2}}\). %
In practice, we observed that the difference in computational complexity
is not significant for reasonable tolerances due to the additional branching
cost when using \(\eta{=}4/3\) where the branching times do not align with the
time-stepping scheme.
In any case, these results are an improvement over computational
complexity for the MLMC sampler based on \(\Delta P_{\ell}\), labelled
``\tradestlabel'', which is approximately \(\Order{\varepsilon^{-5/2}}\) for
Euler-Maruyama, and \(\Order{\varepsilon^{-2}\abs{\log\p*{\varepsilon}}^{2}}\) for
Milstein. %

The kurtosis of \(\Delta P_{\ell}\) grows approximately in proportion to
\(h_{\ell}^{-1/2}\) when using Euler-Maruyama or \(h_{\ell}^{-1}\) when using
Milstein; recall that in both cases \(\E{\p{\Delta P_{\ell}}^{2}} = \E{\p{\Delta
    P_{\ell}}^{4}}\). %
This leads to difficulties when estimating \(\var{\Delta P_{\ell}}\) for sufficiently
large \(\ell\) and determining the optimal number of samples in MLMC for these
levels becomes difficult. On the other hand, \Cref{fig:kurt-gbm-1d}
illustrates another benefit of our branching estimator: \(\Zest_{\ell}\) has a
bounded kurtosis and hence an MLMC algorithm that relies on variance
estimates is more stable when using \(\Zest_{\ell}\) than when using \(\Delta P_{\ell}\).
See also \cref{sec:kurtosis} for a proof of the boundedness of the kurtosis of
\(\Zest_{\ell}\).
\def\excludeTriangle{1}
\pgfplotstableread[col sep=comma,header=false]{
  gbm1-Wl-x0,0.5,0.25,0.125,0.0625,0.03125,0.015625,0.0078125,0.00390625,0.001953125,0.0009765625,0.00048828125,0.000244140625,0.0001220703125,,
gbm1-Wl-y0,2,4,8,16,32,64,128,256,512,1024,2048,4096,8192,,
gbm1-Wl-y1,2,6,16,40,96,224,512,1152,2560,5632,12288,26624,57344,,
gbm1-Wl-y2,2,4,12,34,80,152,326,720,1526,2978,6124,12780,26242,,
gbm1-Tl-y0,5.17784932236763e-06,9.44562802079377e-06,1.68298552655111e-05,4.82884386352673e-05,6.12279898516691e-05,0.000138024197784579,0.000295459565463339,0.000546377723811159,0.00101866780952284,0.00204514244393368,0.00394231858075994,0.00787600251187564,0.0153151258842865,,
gbm1-Tl-y1,4.84129425826346e-06,1.03491827086279e-05,1.96944639846018e-05,4.39257262523767e-05,7.87255172706713e-05,0.000153053256024601,0.000306723492255636,0.000604528814886406,0.00121174644702559,0.00238416060734137,0.00483648980783809,0.00968662133452239,0.0190355887230794,,
gbm1-Tl-y2,5.0786590784978e-06,9.42905664823617e-06,1.80740134826132e-05,4.70572116838139e-05,8.06885775012575e-05,0.000156810286270965,0.00029969631201902,0.000614826142503198,0.0011390105236298,0.00219864997087391,0.00426399700604616,0.00846149403245965,0.0167920343656164,,
gbm1-Tl-y3,5.63306271270582e-06,1.07999278861246e-05,1.96069206117065e-05,4.45998279721874e-05,7.21809673745921e-05,0.000150970416558776,0.000326631004406009,0.000644658637132235,0.00123810227367149,0.00236820360467692,0.00461228606664831,0.00915741593903227,0.0180462901393889,,
gbm1-Tl-y4,5.51705812193026e-06,1.1813458601001e-05,2.79235673748004e-05,4.60579875073615e-05,9.43018562474828e-05,0.000190112517707667,0.00037184704237493,0.000726371778376922,0.00140746869265464,0.00286053190968788,0.00572585295624794,0.0117689156603471,0.0232223721311256,,
gbm1-Tl-y5,5.65444303166335e-06,1.08397986953426e-05,2.14123037780166e-05,5.56189493293975e-05,9.27260634341058e-05,0.000183711987059967,0.000358157718115172,0.000711264942482019,0.00137455455365644,0.00261376219356705,0.00513855521845969,0.0101911628768323,0.0199043598671438,,
gbm1-Vl-y0,0.243581036929387,0.0182011646554738,0.0146579684542461,0.0109926264123291,0.0077469812644133,0.00571714363412309,0.00393907457183227,0.00277467747170271,0.00191182174190478,0.00136243322557298,0.000969322407697828,0.000658837100230993,0.000480692317600042,,
gbm1-Vl-y1,0.243471977565013,0.0091638866521905,0.00385367459763745,0.00148369110228445,0.000575840731757017,0.000229627440482483,9.10772406163576e-05,3.71221508765043e-05,1.56195378979639e-05,6.78605408956325e-06,3.01887425386318e-06,1.39407678322295e-06,6.36115696896646e-07,,
gbm1-Vl-y2,0.24350297901185,0.0180249900953132,0.00743903387322659,0.00281525224497403,0.00105957352890775,0.000750574976416446,0.000270840589822855,0.000104398104480867,4.03570162557899e-05,2.63294377882266e-05,1.0168991251511e-05,4.10559720595246e-06,1.66321848000192e-06,,
gbm1-Vl-y3,0.243366377371895,0.00926715879918905,0.00495064411267794,0.00252753771942447,0.00125639928343949,0.000670332974968559,0.000289525998733151,0.000192041892189782,0.000108467531604667,1.49281210935915e-05,1.39329268872469e-05,5.47344789199739e-05,5.97130192274534e-06,,
gbm1-Vl-y4,0.243342849734293,0.00477289204394543,0.00123505294702615,0.000305506235509815,7.91654071206616e-05,1.91549615659957e-05,5.05032702833698e-06,1.25359008605207e-06,3.19996742999642e-07,7.69524766737752e-08,1.92943967295368e-08,4.87882584594541e-09,1.21675476845263e-09,,
gbm1-Vl-y5,0.243407006075258,0.00912393282546325,0.00248741810012577,0.000648276699824025,0.000159555695023932,8.30484681286344e-05,2.16027974124815e-05,4.9658099218645e-06,1.29567855506829e-06,6.1140235304431e-07,1.48409050487109e-07,4.12264865640936e-08,9.95389587898816e-09,,
gbm1-tau-y0,2.21184247238621e-07,4.28750067000177e-07,6.87044137602399e-07,1.01800746978468e-06,1.46829778221762e-06,2.10705076812939e-06,3.03300323000379e-06,4.30325793612535e-06,6.02655350023015e-06,8.66470822862759e-06,1.23059218096885e-05,1.71763426179339e-05,,,
gbm1-tau-x0,0.5,0.198425131496025,0.0787450656184296,0.03125,0.0124015707185016,0.00492156660115185,0.001953125,0.000775098169906348,0.000307597912571991,,,,,,
gbm1-tau-y1,2.25556124547485e-07,5.09638695200537e-07,9.32474804531996e-07,1.4906476257713e-06,2.47760942787122e-06,3.85940454568073e-06,6.04908937101911e-06,9.43712368132962e-06,1.58613654458599e-05,,,,,,
gbm1-tau-y2,2.37279636844708e-11,3.98629789899109e-11,6.16927055796241e-11,1.34774833727794e-10,1.44266019201582e-10,1.2148717406449e-10,1.06301277306429e-10,9.41525598999801e-10,2.42974348128981e-10,2.42974348128981e-10,4.85948696257962e-10,1.94379478503185e-09,,,
gbm1-tau-y3,6.07435870322452e-11,0,1.2148717406449e-10,1.2148717406449e-10,7.28923044386943e-10,9.71897392515923e-10,1.94379478503185e-09,0,0,,,,,,
gbm1-kurt-y0,1.1054098981026,53.0513366182093,66.9481615316345,90.230879588822,128.626761440513,174.126850011883,253.732485020911,360.310961988347,523.142942675885,733.513448638952,1032.32386453712,1517.6940558902,2080.17701214964,,
gbm1-kurt-y1,1.1072488505704,30.3386698718963,22.0700629067063,18.8076280993433,16.3897209842241,14.3655204423388,12.7895881849227,11.8768704414819,11.206037736851,10.7535217630717,10.0679933736058,10.2530071151078,10.2446991405208,,
gbm1-kurt-y2,1.106725938459,53.5726157569503,36.6901991751966,27.0636282354189,23.3436469810663,29.8161348379147,24.6821851760094,21.685601704599,19.5740495348969,22.9568312913711,21.4232314854377,19.2303386680088,18.0496383777297,,
gbm1-kurt-y3,1.10903104528639,113.039247701791,207.039307023681,400.625146208055,800.907200099424,1496.78878478687,3458.9258876648,5212.13810144521,9224.35046829554,66992.6669204541,71777.4288082965,18274.8034831047,167472.66676818,,
gbm1-kurt-y4,1.10942832753008,61.0211350821892,61.7224194858056,64.2849860824216,64.49922227011,67.3802785220503,66.1426608719502,65.166520918766,65.866005785747,68.0877796956148,81.0994621132855,67.6909804926982,66.6885057666751,,
gbm1-kurt-y5,1.10834517922962,114.716616983161,108.510830579723,110.643412031108,113.925222270982,219.367198230092,190.144214841833,230.369313007537,216.328586510112,417.403364420966,420.265464528709,380.022818773229,412.574860576363,,
gbm1-work-x0,0.0290709672551173,0.0145354836275586,0.00726774181377932,0.00363387090688966,0.00181693545344483,0.000908467726722416,0.000454233863361208,0.000227116931680604,,,,,,,
gbm1-work-y0,6.78100992,8.61896704,6.9490688,22.44464128,62.46688256,45.70584096,80.48598464,123.92591596,,,,,,,
gbm1-work-x1,0.0290692784868164,0.0145346392434082,0.00726731962170411,0.00363365981085205,0.00181682990542603,0.000908414952713013,0.000454207476356507,0.000227103738178253,,,,,,,
gbm1-work-y1,9.6681984,8.67180544,9.14352128,10.87940608,15.78709888,16.35810304,24.61850048,23.43021796,,,,,,,
gbm1-work-x2,0.0290643137809927,0.0145321568904963,0.00726607844524817,0.00363303922262408,0.00181651961131204,0.000908259805656021,0.00045412990282801,0.000227064951414005,,,,,,,
gbm1-work-y2,7.94525696,9.32855808,19.13929728,20.00897024,14.99583872,23.00689984,27.6701952,32.80099056,,,,,,,
gbm1-work-x3,0.0318907564064499,0.015945378203225,0.00797268910161249,0.00398634455080624,0.00199317227540312,0.000996586137701561,0.00049829306885078,0.00024914653442539,,,,,,,
gbm1-work-y3,2.78757376,6.24320512,7.37181696,9.88161024,12.56622464,13.7745184,23.24793464,20.2800665,,,,,,,
gbm1-work-x4,0.0318903965398135,0.0159451982699068,0.00797259913495338,0.00398629956747669,0.00199314978373835,0.000996574891869173,0.000498287445934586,0.000249143722967293,,,,,,,
gbm1-work-y4,9.4404608,7.94877952,6.81693184,6.2564352,6.01391744,6.15581632,6.23439184,6.29983038,,,,,,,
gbm1-work-x5,0.0319007032794226,0.0159503516397113,0.00797517581985565,0.00398758790992783,0.00199379395496391,0.000996896977481957,0.000498448488740978,0.000249224244370489,,,,,,,
gbm1-work-y5,7.51730688,7.26286336,6.756864,6.68463616,7.1218816,7.79457952,8.17500056,8.56337864,,,,,,,
gbm1-time-y0,4.66572460032e-06,4.3108620288e-06,2.45720219648e-06,5.93225851136e-06,1.361722226112e-05,9.75785841056e-06,1.560442668996e-05,2.137718804596e-05,,,,,,,
gbm1-time-y1,5.040791552e-06,3.95334144e-06,3.00446678016e-06,2.73622813696e-06,3.22733063488e-06,3.08193846688e-06,4.13919545632e-06,3.66780179811e-06,,,,,,,
gbm1-time-y2,5.47813720064e-06,4.43874291712e-06,5.64158620672e-06,4.49151763456e-06,3.11974249792e-06,4.27410615664e-06,4.84372695464e-06,5.10711428506e-06,,,,,,,
gbm1-time-y3,2.88186007552e-06,3.9294898176e-06,3.36733014016e-06,3.50889551616e-06,3.78697758464e-06,4.01037271088e-06,6.07767592772e-06,4.95032789384e-06,,,,,,,
gbm1-time-y4,4.77480992768e-06,3.0809524224e-06,2.24432247808e-06,1.7746828672e-06,1.62192003584e-06,1.58358268512e-06,1.55509202028e-06,1.54762237921e-06,,,,,,,
gbm1-time-y5,5.22360963072e-06,4.01791909888e-06,2.84655591424e-06,2.14430717696e-06,2.1021575072e-06,2.10170623888e-06,2.08082078392e-06,2.09285059386e-06,,,,,,,
gbm1-levels-y0,3,4,4,6,8,8,9,10,,,,,,,
gbm1-levels-y1,3,4,5,6,7,8,10,10,,,,,,,
gbm1-levels-y2,3,4,6,7,7,8,9,10,,,,,,,
gbm1-levels-y3,2,4,5,6,7,8,10,10,,,,,,,
gbm1-levels-y4,3,4,5,6,7,8,9,10,,,,,,,
gbm2-Wl-y0,6,12,24,48,96,192,384,768,1536,3072,6144,12288,24576,,
gbm2-Wl-y1,6,18,48,120,288,672,1536,3456,7680,16896,36864,79872,172032,,
gbm2-Wl-y2,6,12,36,102,240,456,978,2160,4578,8934,18372,38340,78726,,
gbm2-Tl-y0,5.89407838074265e-06,1.14216169069527e-05,2.08957661773748e-05,5.00321065544323e-05,7.88931256741475e-05,0.000168218084723706,0.000369381834367278,0.000662166536756002,0.00129434059047775,0.00259506326761975,0.00499196816757796,0.00983053596037778,0.0192428590784407,,
gbm2-Tl-y1,5.94787180993208e-06,1.38706633239795e-05,3.98364587194601e-05,5.75888987369598e-05,0.000132692189543111,0.00028994794958716,0.000590232798627987,0.00120902567105308,0.0025793082803298,0.00552380504003566,0.0124429047250064,0.0285919609812984,0.0636192820140511,,
gbm2-Tl-y2,6.08329750170374e-06,1.13945247925771e-05,2.64393277229018e-05,6.42997531849108e-05,0.000116911766350649,0.000233582589229581,0.000462341925283526,0.000940750502524482,0.00183751530802933,0.00355338148511709,0.00712976191453873,0.014599354844659,0.0290013633101324,,
gbm2-Vl-y0,0.217198925650508,0.0133947846356597,0.0100139124202237,0.00662952469765349,0.00531817606301853,0.00359270328854619,0.00240839907536183,0.00172770242644225,0.0011066721734084,0.00109969100447628,0.000659805179366747,0.000392116942973092,0.000306525330483742,,
gbm2-Vl-y1,0.21694826786441,0.0065163398796842,0.00260513075943423,0.000936166061156226,0.000366510411737279,0.000134855057818009,5.37395150534565e-05,2.12673920393716e-05,8.63264851299915e-06,3.61748455535737e-06,1.58050453618001e-06,6.95526483905818e-07,3.24764577719256e-07,,
gbm2-Vl-y2,0.217298705654538,0.0131507854654347,0.00486091259466503,0.00183109036814374,0.000670869418609321,0.000476864658550557,0.000175868534955723,6.33416211669394e-05,2.40563645015223e-05,1.66679309700327e-05,6.21782896057956e-06,2.42939883904047e-06,9.70666908319255e-07,,
gbm2-tau-y0,1.00873742893243e-07,1.99836435591339e-07,3.27690771430921e-07,4.99919721275378e-07,7.11026465057567e-07,1.0140914066582e-06,1.46952404338083e-06,2.06373299762702e-06,2.94673215052125e-06,4.10675243207604e-06,6.04641665318969e-06,8.13186548318073e-06,,,
gbm2-tau-y1,9.79945917797696e-08,2.60772219129429e-07,4.2854600651249e-07,6.84337251505275e-07,1.28436240420979e-06,1.76593756220143e-06,2.9477647915008e-06,4.55819877089968e-06,7.03264953224522e-06,,,,,,
gbm2-kurt-y0,1.6040743387888,73.3153443118687,99.1040051274209,150.105100014217,187.636436643905,278.093375318315,414.886751944353,578.961610751023,904.076136412904,908.738078208429,1514.6549707513,2549.96084612212,3261.6553881695,,
gbm2-kurt-y1,1.60939379624357,41.9716937496591,30.6559646034234,24.7449542407404,20.3213318265332,17.8328618962611,16.6726359267187,15.1614796962746,14.4588790097316,12.7264940058278,12.0464131164093,11.5356442379179,11.0154757126602,,
gbm2-kurt-y2,1.60196022331491,74.6986124452935,54.1477530338818,41.5450042637005,33.0907803485043,41.6670112180719,34.2562549109257,28.1053557508269,25.7110165247156,30.0567681153579,25.050452253465,23.768326182092,21.3044213320078,,
gbm2-work-x0,0.0389222260478364,0.0194611130239182,0.0097305565119591,0.00486527825597955,0.00243263912798977,0.00121631956399489,0.000608159781997444,0.000304079890998722,,,,,,,
gbm2-work-y0,8.40597504,12.05280768,9.35104512,39.20572416,80.02174464,84.54847968,116.36764008,171.35032698,,,,,,,
gbm2-work-x1,0.0389204508826492,0.0194602254413246,0.00973011272066231,0.00486505636033116,0.00243252818016558,0.00121626409008279,0.000608132045041394,0.000304066022520697,,,,,,,
gbm2-work-y1,10.20985344,12.42906624,15.22335744,11.29852416,25.54718208,29.04433344,34.19908584,40.21023492,,,,,,,
gbm2-work-x2,0.0389294870963943,0.0194647435481971,0.00973237177409856,0.00486618588704928,0.00243309294352464,0.00121654647176232,0.00060827323588116,0.00030413661794058,,,,,,,
gbm2-work-y2,8.43350016,24.72296448,14.89539072,35.52978432,40.16178816,42.61547136,55.49048592,59.12611824,,,,,,,
gbm2-time-y0,2.5279782912e-06,2.95077289984e-06,1.9121645568e-06,5.42398046976e-06,1.023070302208e-05,1.044530078864e-05,1.355141861756e-05,1.78260343805e-05,,,,,,,
gbm2-time-y1,3.96341444608e-06,4.067791872e-06,3.34791625728e-06,1.96099589888e-06,3.76699515968e-06,4.13549754144e-06,4.80005141952e-06,5.24707189151e-06,,,,,,,
gbm2-time-y2,1.98198296576e-06,4.69598322688e-06,2.76695216128e-06,5.39213292288e-06,5.5497408832e-06,5.781177608e-06,7.3228619502e-06,7.23585493474e-06,,,,,,,
gbm2-levels-y0,2,3,3,5,7,7,8,9,,,,,,,
gbm2-levels-y1,2,3,4,4,6,7,8,9,,,,,,,
gbm2-levels-y2,2,4,4,6,7,7,8,9,,,,,,,
gbm3-Wl-y0,8,16,32,64,128,256,512,1024,2048,4096,8192,16384,32768,,
gbm3-Wl-y1,8,24,64,160,384,896,2048,4608,10240,22528,49152,106496,229376,,
gbm3-Wl-y2,8,16,48,136,320,608,1304,2880,6104,11912,24496,51120,104968,,
gbm3-Tl-y0,5.89998783009827e-06,1.17952101359701e-05,2.18524871643182e-05,5.89685400675057e-05,8.53350122643125e-05,0.000194552435189675,0.000358914635173834,0.000705338751387065,0.00136249793539192,0.00263896382704472,0.00515167608975795,0.0101314477641491,0.019714657241704,,
gbm3-Tl-y1,8.06555294307174e-06,1.43431701288102e-05,3.7146270939499e-05,6.74842383451523e-05,0.000139506052538847,0.000294822358354262,0.000610734487937135,0.0013349538367645,0.00286792834805455,0.00634220062025413,0.014551977518068,0.0347666323680881,0.0798603792732973,,
gbm3-Tl-y2,6.38286944977037e-06,1.17893201435447e-05,2.99950793480418e-05,5.95081725697608e-05,0.00012289035424685,0.000247601496803153,0.000541264773554103,0.00102897036322363,0.00204528155623917,0.0039727918056261,0.00810356182372494,0.0165742456222511,0.0331141788199259,,
gbm3-Vl-y0,0.201498354332326,0.0126746501337767,0.00952195805575118,0.00691404538073933,0.00479074150684992,0.00328220319140102,0.00239847346986059,0.00172173108886263,0.00123903804631875,0.000850897553803843,0.000535420808146375,0.000511466933242728,0.000277667188538011,,
gbm3-Vl-y1,0.201242894674828,0.00646377909496796,0.00246927711201747,0.000913595869404882,0.000340389099080082,0.000130513411683675,5.00500414481661e-05,2.00318790970721e-05,8.24058818406781e-06,3.42716905770703e-06,1.494973508614e-06,6.62887736425288e-07,3.02506222708207e-07,,
gbm3-Vl-y2,0.20141459188471,0.0126416365617013,0.00467911394707276,0.00172002286714852,0.000644620263172679,0.000445870585520711,0.000164443101484039,6.1140776854789e-05,2.30700467688505e-05,1.5456674585159e-05,5.92529991344437e-06,2.29269483996587e-06,9.12959638572986e-07,,
gbm3-tau-y0,9.26605455435006e-08,1.87944454751956e-07,3.080183912994e-07,4.60339579612586e-07,6.69565170433871e-07,9.44828531544679e-07,1.36495395830482e-06,1.91466823504989e-06,2.74245146733181e-06,3.87118880156499e-06,5.41832796327627e-06,7.73727514181927e-06,,,
gbm3-tau-y1,8.83059896481265e-08,2.2848700262179e-07,4.03398161481141e-07,7.13251198932623e-07,1.10553328398686e-06,1.73726658912221e-06,2.77476705563296e-06,4.61068123009554e-06,7.64300109474522e-06,,,,,,
gbm3-kurt-y0,1.96281968809892,77.9052206160356,104.293370979738,144.144180481721,208.320549157615,304.427521885723,416.738436931107,580.969580777298,806.698470835474,1175.88627447773,1867.0079982138,1953.2787081531,3601.39091243406,,
gbm3-kurt-y1,1.96911953893238,41.7927682285873,31.4110477004874,26.258439803988,21.4843240412755,18.5789323877045,16.805657015951,15.3128121294031,14.2735585051267,13.294129263419,12.7066542957646,12.3868569549071,11.7679800111169,,
gbm3-kurt-y2,1.96488357989675,78.0264447179577,56.9835043221835,43.0106198136184,34.6816238184753,45.8857802310285,35.9527301273138,30.9233465799312,26.9855094229786,30.3936861555395,27.1348618546865,23.9372112544523,23.1624666289629,,
gbm3-work-x0,0.0446455327741155,0.0223227663870577,0.0111613831935289,0.00558069159676443,0.00279034579838222,0.00139517289919111,0.000697586449595554,0.000348793224797777,,,,,,,
gbm3-work-y0,10.94975488,9.81598208,25.0793984,40.78845952,67.74245376,69.22742912,95.24115648,243.89252944,,,,,,,
gbm3-work-x1,0.044644899211809,0.0223224496059045,0.0111612248029522,0.00558061240147612,0.00279030620073806,0.00139515310036903,0.000697576550184515,0.000348788275092258,,,,,,,
gbm3-work-y1,13.42308352,10.0777984,20.310016,35.62627072,24.46022144,23.01052032,47.15609472,54.1152432,,,,,,,
gbm3-work-x2,0.0446204599800844,0.0223102299900422,0.0111551149950211,0.00557755749751055,0.00278877874875527,0.00139438937437764,0.000697194687188818,0.000348597343594409,,,,,,,
gbm3-work-y2,10.70989312,15.73126144,11.38868224,20.64726016,55.41524992,40.84873728,62.30011776,95.2678832,,,,,,,
gbm3-time-y0,2.60754522112e-06,2.08678621184e-06,3.80754735104e-06,4.877497344e-06,7.2928272672e-06,7.2876006832e-06,9.6668200958e-06,2.105205708257e-05,,,,,,,
gbm3-time-y1,5.37413779456e-06,2.530430976e-06,3.1273622528e-06,4.84349839616e-06,3.27033720192e-06,2.98140398256e-06,5.97605346732e-06,6.33467098326e-06,,,,,,,
gbm3-time-y2,3.54589851648e-06,4.03909574656e-06,2.16487796736e-06,3.02331486464e-06,6.66147190784e-06,4.88011707696e-06,7.2890170406e-06,1.020351012436e-05,,,,,,,
gbm3-levels-y0,2,2,4,5,6,6,7,9,,,,,,,
gbm3-levels-y1,2,2,4,6,6,6,8,8,,,,,,,
gbm3-levels-y2,2,3,3,4,7,7,8,10,,,,,,,
CC_el1-Wl-y0,4,8,16,32,64,128,256,512,1024,2048,4096,8192,16384,,
CC_el1-Wl-y1,4,12,32,80,192,448,1024,2304,5120,11264,24576,53248,114688,,
CC_el1-Wl-y2,4,8,24,68,160,304,652,1440,3052,5956,12248,25560,52484,,
CC_el1-Wl-y3,4,12,40,104,372,784,1968,4620,14404,30016,72752,167280,499728,,
CC_el1-Tl-y0,5.50642063853088e-06,9.57372414458329e-06,1.67225744978637e-05,4.23866835464338e-05,6.82203191670643e-05,0.000122950863876161,0.000249343612201654,0.000498135409489938,0.00100510894231925,0.00191157912159232,0.00379810697571108,0.00746845810893615,0.0145966906940481,,
CC_el1-Tl-y1,5.21921546785695e-06,1.0571346683487e-05,2.16363733456393e-05,4.24296183476023e-05,8.31321664866368e-05,0.000172991355655679,0.000311455277215903,0.000645616946706347,0.00131859670802476,0.00258678387803067,0.00531549314262381,0.0109184314495629,0.0216278351587095,,
CC_el1-Tl-y2,5.59584896086128e-06,9.54105263682687e-06,1.81480945580325e-05,4.6661112955801e-05,7.42455237325589e-05,0.000143402210751157,0.000304761421006576,0.000592095908370747,0.00114009888546102,0.00221985988247736,0.00436522587421973,0.00874359875371692,0.0168503860004578,,
CC_el1-Tl-y3,5.84506305160037e-06,1.36014380177874e-05,2.55368514710171e-05,6.14226424390343e-05,9.54994803686051e-05,0.000193196502128604,0.000398696866716929,0.000737738402880681,0.00140843027787414,0.0028235859689629,0.00556851808978304,0.0110081791120825,0.0219101407803643,,
CC_el1-Tl-y4,5.66984580200949e-06,1.47862794103136e-05,3.69974415583216e-05,5.34379534470807e-05,0.000109635036984447,0.000228896291250256,0.000478920358096718,0.000902016700310692,0.00182760919450195,0.00360627479210591,0.00729719500632802,0.0151067980898509,0.0304124707206609,,
CC_el1-Tl-y5,5.60297280739827e-06,1.48956885763035e-05,3.95258853010311e-05,5.77056792321478e-05,0.000137925087503947,0.000280692091413364,0.000547764020835518,0.00114525781837618,0.00287668044779711,0.00620543687017101,0.0147123424165473,0.0336680522250237,0.093962241882921,,
CC_el1-Vl-y0,0.0096570842905883,0.00880230458191604,0.00668137719103558,0.00459136247920808,0.00321182648129742,0.00223520578928151,0.00148778872459171,0.00114649168069699,0.000843947760938578,0.000524482483085964,0.000425945509325936,0.0003294187650472,0.000193073212750771,,
CC_el1-Vl-y1,0.00958974856595983,0.00487982019482242,0.00184190090386358,0.00070662201211256,0.000292028417509513,0.000113951492060986,5.15219557516966e-05,2.29196549061007e-05,1.03462890628419e-05,4.69879590425183e-06,2.3589823137095e-06,1.07923361386432e-06,5.27849298058056e-07,,
CC_el1-Vl-y2,0.00970684811234102,0.00878204286621668,0.00340154758750355,0.00122427770170538,0.000503356068403882,0.000339321176727876,0.000130133973222663,4.98235544426561e-05,2.18571846620747e-05,1.33233019534635e-05,6.21784721200832e-06,2.53921024425152e-06,1.12746855448978e-06,,
CC_el1-Vl-y3,0.0183486239141294,0.00373149348790614,0.002451084129132,0.00169564662866648,0.00117236642994993,0.000784979560044509,0.000562050008040625,0.000347828907509876,0.000280400314309388,0.000182623240254105,0.000168687068591462,0.000108224847711479,7.86224916182578e-05,,
CC_el1-Vl-y4,0.0185210327371293,0.00193011222528905,0.000635835875297948,0.000215440556903297,7.34574332092736e-05,2.60632451579094e-05,9.22757412938589e-06,3.27174268358244e-06,1.16630924786712e-06,4.04596442893116e-07,1.42813486672659e-07,5.03636926022261e-08,1.76760734727418e-08,,
CC_el1-Vl-y5,0.01860242570435,0.00192286952563999,0.000656490775987417,0.000212919661204536,3.85645132380821e-05,1.29754772601812e-05,4.50841636118519e-06,1.58832837197315e-06,2.89795748625567e-07,1.04248005312591e-07,3.66828417713359e-08,1.28369920185067e-08,2.26724165792324e-09,,
CC_el1-tau-y0,1.14014051902066e-07,3.13477721183923e-07,6.36356461579633e-07,1.11883802778402e-06,1.79799878673189e-06,2.70241385052918e-06,3.95337487481962e-06,5.6968069380256e-06,8.31300285971089e-06,1.17391841426776e-05,1.67683886874254e-05,2.38775751393312e-05,,,
CC_el1-tau-y1,1.22383141973216e-07,4.09988840674139e-07,9.22269881910579e-07,1.91621719651921e-06,2.82214705351811e-06,5.28323422571656e-06,8.29514424512341e-06,1.40497487062102e-05,1.95584631269904e-05,,,,,,
CC_el1-tau-y2,6.08622268506676e-11,3.26734059935163e-10,2.70498786002967e-11,2.14548247634985e-09,7.64989549187338e-10,2.01213132044312e-09,1.75397107555608e-09,4.81392927230543e-09,2.47833835091561e-08,3.70232162961535e-08,1.14562405142814e-07,3.23520344533738e-07,,,
CC_el1-tau-x0,0.5,0.287174588749259,0.164938488846612,0.0947322854068999,0.0544094102060078,0.03125,0.0179484117968287,0.0103086555529132,0.00592076783793124,0.00340058813787548,0.001953125,0.00112177573730179,0.000644290972057077,0.000370047989870703,0.000212536758617218
CC_el1-tau-y3,1.67903149275074e-11,1.17193895635332e-11,2.39504133440127e-12,3.1498871791135e-11,2.46177623226384e-11,9.73439710155414e-11,1.4085512442194e-09,2.42250645236605e-09,4.69291665751463e-09,2.44587849659525e-09,2.37137269062601e-08,1.64861891679703e-08,7.98588345764549e-08,1.7781166514014e-07,2.96610935478453e-07
CC_el1-kurt-y0,100.55092385127,111.401473571277,147.904171789163,216.524427718921,310.342685738314,446.943318072487,671.461052393724,872.480063885171,1184.73946215987,1906.63378330365,2346.83825166916,3035.71042661543,5179.25964608364,,
CC_el1-kurt-y1,101.278020755376,65.6222884873008,59.9908170747483,58.9407729689405,54.7191144617347,52.4341664158323,58.4360565572995,54.8213985158879,55.4677128670706,62.8946035046747,88.7101078171614,50.9212145928796,58.7869871863474,,
CC_el1-kurt-y2,100.020052279239,111.676006814791,89.3373978374221,72.9412175916963,86.0292638642656,86.7578826205684,76.0453573004207,75.0738934789413,80.5685815180325,85.0516347254205,95.5463767221333,88.0002280838037,83.9359999048736,,
CC_el1-kurt-y3,51.4999998190572,66.2636762111522,101.497911673325,146.910937822471,213.110937693986,318.352622791161,444.637320153045,718.524132722755,891.157875779466,1369.08153432107,1481.38996694335,2308.73773225166,3179.47805670885,,
CC_el1-kurt-y4,50.9926695337723,38.7118017052861,37.8021474633128,38.5155494328041,37.6297814244088,36.9391038891352,43.5192860110761,38.1974518380538,37.9923909790333,38.0546096872141,38.3229206761265,37.971305604056,36.558863091877,,
CC_el1-kurt-y5,50.7564302577034,39.0776385160232,36.2017719123079,33.8749967456349,29.125296454782,28.4657338011978,28.9847808948286,29.6777645202317,30.7279118758576,28.9265886469669,34.6372872044046,30.0672445306104,30.4528934744843,,
CC_el1-work-x0,0.596326888068752,0.298163444034376,0.149081722017188,0.074540861008594,0.037270430504297,0.0186352152521485,0.00931760762607426,0.00465880381303713,,,,,,,
CC_el1-work-y0,3.93216,6.66025984,8.6370304,8.63794176,41.7605376,32.0211936,40.5043776,94.38835772,,,,,,,
CC_el1-work-x1,0.593754953308936,0.296877476654468,0.148438738327234,0.074219369163617,0.0371096845818085,0.0185548422909043,0.00927742114545213,0.00463871057272606,,,,,,,
CC_el1-work-y1,5.24288,1.31072,8.58701824,3.84884736,9.24264704,10.31257024,8.03100192,17.01622608,,,,,,,
CC_el1-work-x2,0.592197899329409,0.296098949664705,0.148049474832352,0.0740247374161762,0.0370123687080881,0.018506184354044,0.00925309217702202,0.00462654608851101,,,,,,,
CC_el1-work-y2,3.93216,3.36379904,1.655808,6.93799936,8.39166208,22.7155616,19.7361496,22.61906524,,,,,,,
CC_el1-work-x3,0.595604889006662,0.297802444503331,0.148901222251666,0.0744506111258328,0.0372253055629164,0.0186126527814582,0.0093063263907291,0.00465316319536455,,,,,,,
CC_el1-work-y3,3.93216,1.07905024,1.49286912,1.04316928,6.69325056,5.34679808,24.95850752,19.57951248,,,,,,,
CC_el1-work-x4,0.599668905053513,0.299834452526757,0.149917226263378,0.0749586131316892,0.0374793065658446,0.0187396532829223,0.00936982664146114,0.00468491332073057,,,,,,,
CC_el1-work-y4,5.24288,1.363968,1.39874304,1.67176192,1.1883264,2.8358528,3.18687616,3.31173196,,,,,,,
CC_el1-work-x5,0.59494246996627,0.297471234983135,0.148735617491568,0.0743678087457838,0.0371839043728919,0.0185919521864459,0.00929597609322297,0.00464798804661149,,,,,,,
CC_el1-work-y5,5.24288,1.31072,0.56049664,1.9198976,1.9626624,2.15975424,3.12451056,4.17704372,,,,,,,
CC_el1-time-y0,1.5852969984e-06,1.86862997504e-06,1.92939649024e-06,1.4830174464e-06,4.737248128e-06,3.49489580848e-06,4.35342511288e-06,8.78209911832e-06,,,,,,,
CC_el1-time-y1,1.66049234944e-06,4.1512308736e-07,1.11161716736e-06,5.2588625152e-07,9.5596298112e-07,9.4564109872e-07,7.0619383272e-07,1.37423242968e-06,,,,,,,
CC_el1-time-y2,1.62418491392e-06,9.7372909568e-07,4.9504182272e-07,1.0523716992e-06,9.1024657408e-07,2.01045865776e-06,1.69906849928e-06,1.85320254168e-06,,,,,,,
CC_el1-time-y3,2.40436887552e-06,6.6657378304e-07,6.8520745984e-07,3.7624620288e-07,1.19938912768e-06,8.8466346512e-07,3.69572207368e-06,2.79353487346e-06,,,,,,,
CC_el1-time-y4,2.3595966464e-06,6.3988068352e-07,4.9529560064e-07,4.4408793344e-07,2.2904080768e-07,4.1963691072e-07,4.2570096452e-07,4.2849091277e-07,,,,,,,
CC_el1-time-y5,2.67425513472e-06,6.6856378368e-07,2.6098393088e-07,4.10410432e-07,3.372551808e-07,3.0807341264e-07,4.0629972528e-07,5.0640570586e-07,,,,,,,
CC_el1-levels-y0,2,4,5,5,8,8,8,10,,,,,,,
CC_el1-levels-y1,2,2,5,5,7,8,8,10,,,,,,,
CC_el1-levels-y2,2,3,3,5,6,9,9,10,,,,,,,
CC_el1-levels-y3,2,2,3,3,6,6,9,9,,,,,,,
CC_el1-levels-y4,2,2,3,4,4,6,7,7,,,,,,,
CC_el1-levels-y5,2,2,2,4,4,5,6,8,,,,,,,
CC1-Tl-y0,5.42057595055574e-06,9.29222673557367e-06,1.6449437874138e-05,4.16183144233789e-05,7.01274962466993e-05,0.000139820862822472,0.000267522397219755,0.000498377328180963,0.000986948025406926,0.00189789833892493,0.00376821454733041,0.00744420111914919,0.0147506371173699,,
CC1-Tl-y1,5.10398132406223e-06,1.04298321589543e-05,2.11395007694603e-05,4.1782438375388e-05,7.97344803525384e-05,0.000158734312814892,0.000329006433866586,0.000665125373015358,0.00134745870117739,0.00264982576321834,0.00531744352571524,0.0108600750598748,0.0216726366886098,,
CC1-Tl-y2,5.28611812242277e-06,9.31963680466269e-06,1.78640048689903e-05,4.57354127221806e-05,7.2829744476042e-05,0.000143871047314565,0.000302847419764586,0.00057605884637043,0.00115395659189315,0.00218579579927739,0.00427276153829257,0.0086454500863032,0.0167755298195466,,
CC1-Tl-y3,5.85909624388263e-06,1.32866163447404e-05,2.5499671412881e-05,6.38954507507336e-05,9.35481284644194e-05,0.000191878459303622,0.000377101450446685,0.000743213284072603,0.00141194784954475,0.00279505677924604,0.00556072057005326,0.0110086790961065,0.0218677435091631,,
CC1-Tl-y4,5.60154106206955e-06,1.47170855835745e-05,3.66881757879713e-05,5.32781871356023e-05,0.000111686943490414,0.000242665787553711,0.000461061574328857,0.00093467756678724,0.00190313744056187,0.00375285918450659,0.00770764257806312,0.0158699070123635,0.0314610558793328,,
CC1-Tl-y5,5.35273105855201e-06,1.46006873458814e-05,3.38988250512986e-05,5.77757657048809e-05,0.000130956110301291,0.000273908599024745,0.000552745315536952,0.00114081114933939,0.00291991398782487,0.00618656190337649,0.0147831794230422,0.0338082598655182,0.093484755786029,,
CC1-Vl-y0,0.0300345190854599,0.0313310167317497,0.0245110630349719,0.0162909874154615,0.0124001089267931,0.0087933228772196,0.00635512507717732,0.00487254032554566,0.00317860200971642,0.00193768405978532,0.00159528858513371,0.00110171042748787,0.000849919367925677,,
CC1-Vl-y1,0.0301241287045113,0.0171228458653242,0.00760839248224516,0.00331501309135883,0.00157445380659953,0.000692709536599631,0.000341437462449698,0.000164079251740145,7.7139857579566e-05,3.79618535210823e-05,1.9619816631293e-05,1.01540215646342e-05,4.79918897268767e-06,,
CC1-Vl-y2,0.0298954013321645,0.0312668268386585,0.0124739693021962,0.00505298720178771,0.00224722612459832,0.00145725835549226,0.000615840672260801,0.000279696071268342,0.000131455647531661,7.54116224988227e-05,3.77208363785454e-05,1.74576060206171e-05,8.55779124843182e-06,,
CC1-Vl-y3,0.0457397221828456,0.0126268767180174,0.00878827725246841,0.00618271275188015,0.00444032995696097,0.00307029691968004,0.00223076046184524,0.00155826925636523,0.00108677101505132,0.00074989702988553,0.00063345838066453,0.000362012340516714,0.000318471084019636,,
CC1-Vl-y4,0.0459987205828821,0.00637587983308107,0.00235096832525174,0.000849320643707155,0.000306595147900409,0.000110569731807548,4.3104079979816e-05,1.58476420089344e-05,6.03684963784422e-06,2.31716908198385e-06,8.52591168544967e-07,3.22338110165017e-07,1.12307931280335e-07,,
CC1-Vl-y5,0.0460625456368349,0.00635614810231549,0.00231729774732,0.000821186574233067,0.000166527727013055,5.96235334578187e-05,2.21733055481813e-05,8.16921392425697e-06,1.93028621171658e-06,7.1289427776381e-07,2.7998827876235e-07,9.31667941865926e-08,3.23056932448994e-08,,
CC1-tau-y0,1.02878732096617e-06,3.10713176135045e-06,6.02367577279449e-06,1.09279649272846e-05,1.72686159231101e-05,2.78727947526677e-05,4.05686979840516e-05,5.76953827195866e-05,7.83885664241329e-05,0.00010481548916762,0.000137067661163913,0.000177237152293989,,,
CC1-tau-y1,1.01072773052629e-06,4.16345657057064e-06,9.52240767752289e-06,1.85306816344049e-05,3.09891913347184e-05,4.99501805396596e-05,7.75272831036027e-05,0.000114382604125199,0.000163508129727309,,,,,,
CC1-tau-y2,1.31269027093414e-09,6.6750321038969e-09,5.53573392758704e-09,2.87364622589889e-08,6.35283008502547e-08,1.51412881863345e-07,8.26974583279555e-07,2.44636444529151e-06,4.55504010437401e-06,7.3244009807611e-06,1.09097304617523e-05,1.59909922605867e-05,,,
CC1-tau-y3,2.88656239463076e-10,1.20000098090453e-09,3.77676514967991e-09,3.743538585533e-09,2.31971987968038e-08,1.47591345012188e-07,4.14689053680487e-07,1.01096892527714e-06,1.95891020973777e-06,3.37174980883386e-06,4.97138519195994e-06,7.20390088998588e-06,9.98911963906258e-06,1.3838922901518e-05,1.8776784277266e-05
CC1-kurt-y0,30.2950228753326,30.1002761110143,39.5165610310611,60.4219566151726,79.9357103018126,113.19241655935,156.98065853805,205.015877880678,314.146914082633,515.831342446886,626.285713685188,907.814346788664,1176.73349479029,,
CC1-kurt-y1,30.1959808633483,20.8682475472668,23.9035818988045,28.9991248201961,34.6897589703589,42.1430998682759,57.0293724451324,56.8310750846842,66.516671619309,78.5983079069899,91.9435841958246,211.32917521788,120.752022759651,,
CC1-kurt-y2,30.4499607109839,30.1639729275758,25.8949297480149,27.5885116417082,31.8146456317167,38.2091002924962,44.3292935246018,58.9751857328642,67.6637854730792,73.8753162057401,116.425900125776,131.241668765921,166.052976699607,,
CC1-kurt-y3,18.8628350212202,18.7101435931694,27.5627280545721,39.7434141635675,55.7548289931525,81.1296603884289,111.91136672805,160.393002406969,229.836825524013,333.22406052346,394.559010700252,690.586942529586,784.901254619333,,
CC1-kurt-y4,18.7397350910699,13.2276133751831,15.1102517857034,17.2152132759468,20.9201292558649,26.0698782952278,45.2008615590181,56.246552547517,89.2364286843573,119.119455941856,149.969530990584,199.099650738948,262.297096513868,,
CC1-kurt-y5,18.7096121409393,13.3038816352119,14.5083752206255,15.9708918696773,23.2926808693649,31.3188004579173,32.7901507957199,44.411481799966,128.013062133915,132.869611964967,251.982837392384,238.407787222194,658.010714897847,,
CC1-work-x0,0.202148511687815,0.101074255843908,0.0505371279219539,0.0252685639609769,0.0126342819804885,0.00631714099024423,0.00315857049512212,0.00157928524756106,,,,,,,
CC1-work-y0,3.93216,36.77192192,31.8416896,83.0411776,66.41674752,387.94459968,284.37605536,510.82648708,,,,,,,
CC1-work-x1,0.202131095159926,0.101065547579963,0.0505327737899814,0.0252663868949907,0.0126331934474954,0.00631659672374768,0.00315829836187384,0.00157914918093692,,,,,,,
CC1-work-y1,5.24288,33.20758272,30.63013376,35.89490688,70.86706688,52.75496768,49.89165984,118.7667082,,,,,,,
CC1-work-x2,0.202177609536364,0.101088804768182,0.050544402384091,0.0252722011920455,0.0126361005960227,0.00631805029801137,0.00315902514900569,0.00157951257450284,,,,,,,
CC1-work-y2,3.93216,32.82026496,39.06863104,34.81261056,70.43177984,50.44477184,123.34996944,143.63722988,,,,,,,
CC1-work-x3,0.201930850299856,0.100965425149928,0.050482712574964,0.025241356287482,0.012620678143741,0.0063103390718705,0.00315516953593525,0.00157758476796763,,,,,,,
CC1-work-y3,3.93216,9.08476416,14.85279232,22.40024576,56.86619904,45.60306176,84.15979904,219.8378236,,,,,,,
CC1-work-x4,0.202139373671382,0.101069686835691,0.0505348434178455,0.0252674217089227,0.0126337108544614,0.00631685542723068,0.00315842771361534,0.00157921385680767,,,,,,,
CC1-work-y4,5.24288,12.02520064,10.0907008,10.35536384,10.43993856,11.0785056,13.487532,19.91880208,,,,,,,
CC1-work-x5,0.202183235896997,0.101091617948499,0.0505458089742494,0.0252729044871247,0.0126364522435623,0.00631822612178117,0.00315911306089059,0.00157955653044529,,,,,,,
CC1-work-y5,5.24288,46.41734656,14.02486784,13.125376,10.46693376,17.50732032,17.05923408,17.03078612,,,,,,,
CC1-time-y0,1.22286456832e-06,7.98106415104e-06,5.5665312768e-06,9.92377719296e-06,7.52419305472e-06,3.898453010528e-05,2.661535443816e-05,4.284364348691e-05,,,,,,,
CC1-time-y1,2.41371906048e-06,5.14581938176e-06,3.6761188352e-06,3.52105444864e-06,5.84191026176e-06,4.21547598992e-06,3.9242205478e-06,7.95341172744e-06,,,,,,,
CC1-time-y2,2.11005259776e-06,5.93765572608e-06,5.17012169728e-06,3.83528314112e-06,6.31528268672e-06,4.37819433248e-06,9.72598099092e-06,9.90937723341e-06,,,,,,,
CC1-time-y3,1.99708475392e-06,3.29620979712e-06,3.773057792e-06,4.09970697216e-06,8.75654325824e-06,6.84949537376e-06,1.184799235732e-05,2.672350053713e-05,,,,,,,
CC1-time-y4,2.46118219776e-06,3.51998742528e-06,2.27808319488e-06,1.7229934976e-06,1.51823122176e-06,1.44167028896e-06,1.64149924868e-06,2.24790991247e-06,,,,,,,
CC1-time-y5,2.37268598784e-06,6.50752421888e-06,2.1361464832e-06,1.91341949184e-06,1.4366832864e-06,2.05914456192e-06,1.89099930672e-06,1.8562889293e-06,,,,,,,
CC1-levels-y0,2,5,5,7,7,10,10,11,,,,,,,
CC1-levels-y1,2,5,6,7,9,9,9,11,,,,,,,
CC1-levels-y2,2,5,6,6,8,8,10,11,,,,,,,
CC1-levels-y3,2,4,5,6,8,8,9,11,,,,,,,
CC1-levels-y4,2,4,5,6,6,7,8,10,,,,,,,
CC1-levels-y5,2,5,5,6,6,8,9,9,,,,,,,
}\loadedtable
\pgfplotstabletranspose[colnames from=0, input colnames to={}]{\loadedtable}{\loadedtable}

\begin{figure}\centering
  \def\thesubfigure{}%
\begin{tikzpicture}

\begin{axis}[
xmode=log,ymode=log,log basis x={2},log basis y={2},
xlabel={\(\tau\)},
xmin=8.05363715071347e-05, xmax=0.757858283255199,
ylabel={\(\E*{\p*{\E*{\Delta P_{\ell} \given \mathcal F_{1-\tau}}}^{2}}\)},
ymin=8e-08, ymax=4e-5,
]

\addplot[mark=square*, legend entry={\(d{=}1\)}]
table [x=gbm1-Wl-x0,y=gbm1-tau-y0] {\loadedtable};

\addplot[mark=*, legend entry={\(d{=}2\)}]
table [x=gbm1-Wl-x0,y=gbm2-tau-y0] {\loadedtable};

\addplot[mark=triangle*, legend entry={\(d{=}3\)}] table
[x=gbm1-Wl-x0,y=gbm3-tau-y0] {\loadedtable};

\addref[text align=center, raise=-3ex]{\(\tau^{-1/2}\)}{5e-8 * x ^ (-0.5)};
\end{axis}

\end{tikzpicture}
   \caption{Numerical verification for \cref{eq:est-assumpt-cross} with
    \(p{=}1/2\) and \(h_{\ell} {=} 2^{-14}\) for the GBM example in \cref{eq:gbm}
    when using Euler-Maruyama.}
  \label{fig:tau-conv-gbm-1d}
\end{figure}

\begin{figure}\centering
  \begin{customlegend}[legend columns=3,legend
    style={align=left,draw=none,column sep=2ex}]
    \addlegendimage{only marks, mark=*};%
    \addlegendentry{\tradestlabel};%
    \addlegendimage{only marks, mark=square*};%
    \ifx\excludeTriangle\undefined
      \addlegendentry{\(\eta=1\)};%
      \addlegendimage{only marks, mark=triangle*};%
      \addlegendentry{\(\eta=4/3\)};
    \else%
      \addlegendentry{\newestlabel};%
    \fi
  \end{customlegend}

  \begin{subfigure}[t]{0.5\textwidth}\centering
    \subfigtag{{top-left}}\phantomsubcaption%
\begin{tikzpicture}

\begin{axis}[
xmode=log,ymode=log,log basis x={10},log basis y={10},
xlabel={\(h_{\ell}\)},
xmin=8.05363715071347e-05, xmax=0.757858283255199,
ylabel={\(\var{\Zest_\ell}\)},
ymin=4.67876288362468e-10, ymax=0.633454602338942,
]

\addplot [mark=*]
table [x=gbm1-Wl-x0,y=gbm1-Vl-y0] {\loadedtable};

\addplot [mark=square*]
table [x=gbm1-Wl-x0,y=gbm1-Vl-y1] {\loadedtable};

\ifx\excludeTriangle\undefined
\addplot [mark=triangle*]
table [x=gbm1-Wl-x0,y=gbm1-Vl-y2] {\loadedtable};
\fi

\addplot [dashed, mark=o]
table [x=gbm1-Wl-x0,y=gbm1-Vl-y3] {\loadedtable};

\addplot [dashed, mark=square]
table [x=gbm1-Wl-x0,y=gbm1-Vl-y4] {\loadedtable};

\ifx\excludeTriangle\undefined
\addplot [dashed, mark=triangle]
table [x=gbm1-Wl-x0,y=gbm1-Vl-y5] {\loadedtable};
\fi

\addref[text align=center, raise=1.5ex]{\(h_{\ell}^{1/2}\)}{0.1 * x ^ (0.5)};

\addref[text align=left, raise=1.ex]{\(\hskip 1cm h_{\ell}\)}{0.1 * x};

\addref[text align=center, raise=-2.5ex]{\(h_{\ell}^{2}\)}{1e-2 * x^2};

\end{axis}

\end{tikzpicture}
     \label{fig:Vl-gbm-1d}
  \end{subfigure}\hfill
  \begin{subfigure}[t]{0.5\textwidth}\centering
    \subfigtag{{top-right}}\phantomsubcaption%
\begin{tikzpicture}

\begin{axis}[
xmode=log,ymode=log,log basis x={10},log basis y={10},
xlabel={\(h_{\ell}\)},
xmin=8.05363715071347e-05, xmax=0.757858283255199,
ylabel={\(\textrm{Work}\p{\Zest_{\ell}}\)},
ymin=1.19717350359321, ymax=95798.9795595827,
]
\addplot [mark=*]
table [x=gbm1-Wl-x0,y=gbm1-Wl-y0] {\loadedtable};

\addplot [mark=square*]
table [x=gbm1-Wl-x0,y=gbm1-Wl-y1] {\loadedtable};

\ifx\excludeTriangle\undefined
\addplot [mark=triangle*]
table [x=gbm1-Wl-x0,y=gbm1-Wl-y2] {\loadedtable};
\fi

\addref[text align=center, raise=1.5ex]{\(h_{\ell}^{-1} \log_{2}\p{h_{\ell}}\)}{ 2
  * x^(-1) * abs(ln(x))};
\addref[text align=center, raise=-2.5ex]{\(h_{\ell}^{-1}\)}{ 0.5 * x^(-1) };

\end{axis}

\end{tikzpicture}
     \label{fig:Wl-gbm-1d}
  \end{subfigure}

  \begin{subfigure}[t]{0.5\textwidth}\centering
    \phantomsubcaption%
\begin{tikzpicture}

\begin{axis}[
xmode=log,ymode=log,log basis x={10},log basis y={10},
xlabel={\(h_{\ell}\)},
xmin=8.05363715071347e-05, xmax=0.757858283255199,
ylabel={\(\textrm{Kurt}\sq{\Zest_{\ell}}\)},
ymin=0.608838797741259, ymax=304063.972588452,
]
\addplot [mark=*]
table [x=gbm1-Wl-x0,y=gbm1-kurt-y0] {\loadedtable};

\addplot [mark=square*]
table [x=gbm1-Wl-x0,y=gbm1-kurt-y1] {\loadedtable};

\ifx\excludeTriangle\undefined
\addplot [mark=triangle*]
table [x=gbm1-Wl-x0,y=gbm1-kurt-y2] {\loadedtable};
\fi

\addplot [dashed, mark=o]
table [x=gbm1-Wl-x0,y=gbm1-kurt-y3] {\loadedtable};

\addplot [dashed, mark=square]
table [x=gbm1-Wl-x0,y=gbm1-kurt-y4] {\loadedtable};

\ifx\excludeTriangle\undefined
\addplot [dashed, mark=triangle]
table [x=gbm1-Wl-x0,y=gbm1-kurt-y5] {\loadedtable};
\fi

\addref[text align=center, raise=1ex]{\(h_{\ell}^{-1}\)}{ 8e1 * x^-1 };
\addref[text align=left, raise=1ex]{\(\hskip 0.5cm h_{\ell}^{-1/2}  \)}{ 4e1 * x^(-0.5) };
\end{axis}

\end{tikzpicture}
     \label{fig:kurt-gbm-1d}
  \end{subfigure}\hfill
  \begin{subfigure}[t]{0.5\textwidth}\centering
    \subfigtag{{bottom-left}}\phantomsubcaption%
\begin{tikzpicture}

\begin{axis}[
xmode=log,ymode=log,log basis x={10},log basis y={10},
xlabel={Relative \(\varepsilon\)},
xmin=0.000177324031383333, xmax=0.0408491256583033,
ylabel={Total work \(\times \varepsilon^{2}\)},
ymin=2.30584217974168, ymax=149.816251324174,
]
\addplot[mark=*]
table [x=gbm1-work-x0,y=gbm1-work-y0] {\loadedtable};

\addplot[mark=square*]
table [x=gbm1-work-x1,y=gbm1-work-y1] {\loadedtable};

\ifx\excludeTriangle\undefined
\addplot [mark=triangle*]
table [x=gbm1-work-x2,y=gbm1-work-y2] {\loadedtable};
\fi

\addplot[dashed, mark=o]
table [x=gbm1-work-x3,y=gbm1-work-y3] {\loadedtable};

\addplot[dashed, mark=square]
table [x=gbm1-work-x4,y=gbm1-work-y4] {\loadedtable};

\ifx\excludeTriangle\undefined
\addplot[dashed, mark=triangle]
table [x=gbm1-work-x5,y=gbm1-work-y5] {\loadedtable};
\fi

\addref[text align=center, raise=1.ex]{\(\hskip 0.9cm \varepsilon^{-5/2}\)} {2 * x^(-0.5)};

\addref[text align=left, raise=1.5ex]{\(\hskip 0.3cm \varepsilon^{-2} \abs{\log\p{\varepsilon}}^{3}\)} {0.065 * abs(ln(x))^3};

\end{axis}

\end{tikzpicture}
     \label{fig:total-work-gbm-1d}
  \end{subfigure}
  \caption{ The GBM example in \cref{eq:gbm} for \(d{=}1\) when using
    Euler-Maruyama (\emph{solid}) and Milstein (\emph{dashed})
    \ifx\excludeTriangle\undefined%
      and different values of \(\eta\). \else%
      in the traditional, \(\Delta P_{\ell}\), and branching, \(\Zest_{\ell}\),
        estimators. \fi%
    \subref{fig:Vl-gbm-1d}
    {shows numerical verification of the variance
      convergence of \(\Zest_{\ell}\)}.
    \subref{fig:Wl-gbm-1d} The work estimate per sample, based on the number
    of generated samples from the standard normal distribution. The work
    estimates when using the Milstein scheme are identical.
    \subref{fig:kurt-gbm-1d} The kurtosis of \(\Zest_{\ell}\).
    \subref{fig:total-work-gbm-1d} The total work estimate of MLMC for
    different tolerances.
    This figure illustrates the improved computational complexity of MLMC when using the new branching estimator.
  }
  \label{fig:gbm-1d}
\end{figure}

 \section{Antithetic Estimator}\label{sec:antithetic}%
As discussed in the previous section, the Milstein numerical scheme has faster
variance convergence than Euler-Maruyama leading to lower computational
complexity of MLMC estimators. However, for multi-dimensional SDEs, evaluating
the Milstein scheme requires expensive sampling of L\'{e}vy areas in most
cases. In \cite{giles:antithetic}, an antithetic estimator was introduced
which has the same MLMC variance convergence rate as a Milstein approximation
for smooth payoff functions \(f(x)\), but without requiring sampling of
L\'{e}vy areas. In this section, we analyse the corresponding branching
sampler for such an antithetic estimator.

Let \(\br{\p{\X[\ell]{t}}_{t=0}^{1}, \p{\Xa[\ell]{t}}_{t=0}^{1}}\) be an antithetic
pair which are identically distributed.
For example, \cite{giles:antithetic} presents such an antithetic estimator for
a Clark-Cameron SDE which is derived from a truncated Milstein discretization
by setting the L\'evy areas to zero. A similar branching estimator to
  \cref{def:est-main} can be defined by considering the triplet of approximate
  paths, \(\p{\X^{u}[\ell]{t}, \Xa^{u}[\ell]{t}, \X^{u}[\ell-1]{t}}_{t=0}^{1}\) for the
  same branch, \(\widetilde B^{u}\), of a Branching Brownian Motion. The
  antithetic estimator can then be defined as in \cref{eq:est-main} for
  \begin{equation}\label{eq:branching-antithetic-estimator}
  \Delta P_{\ell}^{u} =
  \begin{cases}
      \I{\X^{u}[0]{1} \in S} & \ell = 0,\\
    \frac{1}{2}\p[\big]{
      \I{\X^{u}[\ell]{1} \in S}+
      \I{\Xa^{u}[\ell]{1} \in S}} -
      \I{\X^{u}[\ell-1]{1} \in S} & \ell >0.
  \end{cases}
  \end{equation}
  Since the cost of sampling \(\Xa^{u}[\ell]{t}\) is the same as
  \(\X^{u}[\ell]{t}\), \cref{lem:estimator-workvar} still applies if
  \cref{ass:est-main} is satisfied for
  \cref{eq:branching-antithetic-estimator}.

In this section, we will impose well-motivated assumptions that allow us to
prove that \cref{ass:est-main} is satisfied for
\begin{equation}\label{eq:antithetic-estimator}
  \Delta P_{\ell} \defeq
  \begin{cases}
    \I{\X[\ell]{1} \in S} & \ell = 0,\\
    \frac{1}{2}\p[\big]{\I{\X[\ell]{1} \in S}+\I{\Xa[\ell]{1} \in S}} - \I{\X[\ell-1]{1} \in S}
                     & \ell >0.
  \end{cases}
\end{equation}
which has the same distribution as \(\Delta P_{\ell}^{u}\) in
\eqref{eq:branching-antithetic-estimator}.
In what follows, define
\[
  \begin{aligned}
    \g{t}\p{\xi} &\defeq \prob*{{\X{1} \in S} \given \X{t} = \xi}
    \\
    \text{and}\qquad \g[\ell]{t}\p{\xi} &\defeq \prob*{{\X[\ell]{1} \in S} \given \X[\ell]{t} = \xi}
    .
  \end{aligned}
\]
We will make the following assumptions

\begin{assumption}\label{ass:g-derv-bounds}%
  Let \(\nabla \g{t}\) and \(H_{\g{t}}\) be the Gradient and Hessian of \(\g{t}\),
  respectively. We assume that there exist constants %
  \(c_{1}, c_{2}>0\) such that for
  all \(\xi \in \rset^{d}\) and \(\tau \in \p*{0,1}\),
  \begin{subequations}\label{eq:g-derv-bound}
    \begin{align}
      \norm*{\nabla \g{1{-}\tau}\p{\xi}}  &\leq%
                                  \frac{c_1}{\tau^{1/2}} \exp\p*{ - c_2 \,
                                  \frac{
                                  \dist^2{\xi}
                                  }{\tau}}
                                  \label{eq:g-grad-bound}\\
      \text{and}\qquad \norm*{H_{\g{1{-}\tau}}\p{\xi}}  &\leq%
                                    \frac{c_1}{\tau} \exp\p*{ - c_2 \,
                                    \frac{\dist^2{\xi}}{\tau}},
                                    \label{eq:g-hessian-bound}
    \end{align}
  \end{subequations}
  and for any \(\tau \in (0,1]\),
  \begin{equation}\label{eq:dist-K-bound}
    \prob{ \dist{X_{1{-}\tau}} < \delta} \leq c_1 \delta.
  \end{equation}
\end{assumption}
This assumption is motivated by the case when \(\p{X_{t}}_{t \geq 0}\) is a
\(d\)-dimensional Wiener process, i.e., \(X_{1} \sim \mathcal{N}\p{\xi, \tau}\) and we prove it
in \cref{thm:cond-density-sde} for solutions of uniformly elliptic SDEs.

\begin{assumption}\label{ass:g-weak-conv}
  There exists a constant \(c_1 > 0\) such that for any \(\ell \in \nset\), \(\tau \in
  \p{0,1}\) and \(\xi \in \rset^{d}\),
  \begin{equation}\label{eq:g-weak-conv}
    \abs{{\g[\ell]{1{-}\tau}\p{\xi} - \g{1{-}\tau}\p{\xi}}} \leq
    \frac{c_{1} h_\ell}{\tau^{1/2}}.
  \end{equation}
\end{assumption}
This assumption is motivated by a result that was proved in
{{\cite[Theorem 2.3]{gobet:density}}} for an Euler-Maruyama scheme. In
particular,
letting \(\Gamma\p{\cdot, t; \xi, s}\) and \(\Gamma_{\ell}\p{\cdot, t; \xi, s}\) be the densities of
\(\X{t}\) and the Euler-Maruyama approximation, \(\X[\ell]{t}\), respectively,
given \(\X{s} {=} \X[\ell]{s} {=} \xi\), the authors in \cite{gobet:density} proved
that when the SDE is uniformly elliptic and the coefficients \(a, \sigma \in
C_{b}^{3,1}\) and \(\frac{\partial \sigma}{\partial t} \in C^{1,0}_{b}\) then for all \(\xi,x,y \in
\rset^{d}\), \(0 \leq s \leq t \leq 1\), and some constants \(c\) and \(C\)
\begin{equation*}
  \abs{\Gamma_{\ell}\p{x, t; \xi, s} - \Gamma\p{x, t; \xi, s}} \leq \frac{C \,
    h_{\ell}}{\p{t-s}^{\frac{d+1}{2}}} \exp\p*{- \frac{c \,\norm{x{-}\xi}^{2}}{t-s}}.
\end{equation*}
By setting \(t{=}1, s{=}1{-}\tau\) and integrating with respect to \(x\), \cref{eq:g-weak-conv} follows.
\detailed{\[
    \begin{aligned}
      &\abs*{\g[\ell]{1{-}\tau}\p{\xi} - \g{1{-}\tau}\p{\xi}} \\
      &\leq \int_{S} \; \abs*{ \Gamma_{\ell}\p{y,
        1; \xi, 1{-}\tau} -
        \Gamma\p{y, 1; \xi, 1{-}\tau}} \; \D y\\
      &\leq C \frac{h_{\ell}}{\tau^{\frac{d+1}{2}}} \int_{\rset^{d}}
        \exp\p*{-\frac{c \,\norm{\xi{-}y}^{2}}{\tau}} \; \D y\\
      &= C\: \p*{2\, c\, \pi}^{d/2} \: {h_{\ell}} \big/ {\tau^{1/2}}
    \end{aligned}
  \]}%
Note however that the SDE in the numerical example below does not satisfy the
uniform ellipticity condition of {\cite[Theorem 2.3]{gobet:density}} and a
slightly different numerical scheme is used in our case, namely the truncated
Milstein scheme without L\'evy areas which was proposed in
\cite{giles:antithetic}. %

\begin{theorem}\label{thm:Zest-antithetic-rates}
  Let \cref{ass:g-derv-bounds,ass:g-weak-conv} hold and assume further that
  for some \(q\geq1\)
  \begin{subequations}\label{ass:antithetic-conv}
    \begin{align}
      \label{eq:antithetic-strong-conv}
      \E*{\norm*{\X{1} -
      \X[\ell]{1}}^{q}}^{1/q} &\lesssim h_{\ell}^{1/2}\\
      \label{eq:antithetic-conv}
      \textrm{and}\qquad \E*{\norm*{\frac{1}{2} \p{\X[\ell]{1} + \Xa[\ell]{1}} -
      \X[\ell-1]{1}}^{q}}^{1/q} &\lesssim h_{\ell}.
    \end{align}
  \end{subequations}
  Then for \(\Delta P_{\ell}\) in \cref{eq:antithetic-estimator} there exists a
  constant \(c_{2}\) such that for all \(\tau \in \p{0, 1}\),
  \begin{subequations}
    \begin{align}
      \label{eq:antithetic-diag}\E{ \p{\Delta P_{\ell}}^{2} } &\leq c_{2} h_{\ell}^{\p{1-1/\p{q+1}}/2}\\
      \label{eq:antithetic-cross} \textrm{and}\qquad \E*{ \p*{\E{\Delta P_{\ell} \given
      \mathcal{F}_{1{-}\tau}}}^{2}} & \leq c_{2} h_{\ell}^{2\p{1-5/\p{q+5}}} / \tau^{3/2}.
    \end{align}
  \end{subequations}
\end{theorem}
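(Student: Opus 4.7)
The plan is to handle the two bounds separately, with the diagonal bound \eqref{eq:antithetic-diag} being essentially a reduction to the strong analysis, and the cross bound \eqref{eq:antithetic-cross} being the substantive part that uses the antithetic cancellation together with the gradient/Hessian decay of $g$.

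For the diagonal bound, since $\Delta P_\ell \in [-1,1]$, I have $(\Delta P_\ell)^2 \le |\Delta P_\ell|$. Splitting
\[
|\Delta P_\ell| \le \tfrac{1}{2}\bigl|\I{\X[\ell]{1}\in S}-\I{\X[\ell-1]{1}\in S}\bigr| + \tfrac{1}{2}\bigl|\I{\Xa[\ell]{1}\in S}-\I{\X[\ell-1]{1}\in S}\bigr|,
\]
each summand is of exactly the form treated in \cref{thm:strong-conv-var-rates} (applied with $\beta=1$, i.e.\ the strong order coming from \cref{eq:antithetic-strong-conv} and the identical distribution of $\Xa[\ell]{}$ and $\X[\ell]{}$). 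Reusing that argument verbatim—dominating the indicator jump by $\I{\dist{\X{1}}\le\delta} + (h_\ell^{-1/2}\delta)^{-q}\|\X{1}-\X[\ell]{1}\|^q/h_\ell^{q/2}$ and optimising in $\delta$—gives $h_\ell^{(1-1/(q+1))/2}$. The fact that \cref{eq:dist-K-bound} at $\tau=1$ supplies the required density-near-$K$ bound $\prob{\dist{\X{1}}\le\delta}\lesssim\delta$ is exactly what one needs for this step.

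For the cross bound, I first use the Markov property to write the conditional expectation of each indicator as $\g[\ell]{1-\tau}$ evaluated at the corresponding approximation at time $1-\tau$, and then invoke \cref{eq:g-weak-conv} to replace $\g[\ell]{1-\tau}$ by $\g{1-\tau}$ at the cost of an additive error bounded in sup-norm by $c h_\ell/\tau^{1/2}$. Writing $Y\defeq\X[\ell-1]{1-\tau}$, $\Delta\defeq\X[\ell]{1-\tau}-Y$, $\Delta^{a}\defeq\Xa[\ell]{1-\tau}-Y$, Taylor expansion of $\g{1-\tau}$ to second order around $Y$ then yields the decomposition
\[
\E{\Delta P_\ell\mid\mathcal{F}_{1-\tau}} = A + B + C,
\]
where $A$ is the weak-error term (of order $h_\ell/\tau^{1/2}$ uniformly), $B=\tfrac12\nabla\g{1-\tau}(Y)\cdot(\Delta+\Delta^{a})$ is the linear Taylor contribution, and $C$ is the Lagrange-remainder quadratic involving $H_{\g{1-\tau}}$ at interior points. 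The antithetic cancellation is critical: \cref{eq:antithetic-conv} gives $\|(\Delta+\Delta^{a})/2\|_{L^q}\lesssim h_\ell$, while \cref{eq:antithetic-strong-conv} only gives $\|\Delta\|_{L^q}\lesssim h_\ell^{1/2}$, so without the averaging the linear term would dominate unfavourably.

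The work then reduces to bounding $\E{B^2}$ and $\E{C^2}$ via Hölder's inequality combined with the decay of $\nabla\g{1-\tau}$ and $H_{\g{1-\tau}}$. The essential ingredient is the lemma $\E{\exp(-c\,\dist^2{Y}/\tau)}\lesssim\tau^{1/2}$, which follows from the tail integral
\[
\E{e^{-c\,\dist^2{Y}/\tau}} \le \sum_{k\ge 0}\prob{\dist{Y}\in[k\sqrt{\tau},(k+1)\sqrt{\tau})}\,e^{-ck^2}\lesssim\sqrt{\tau}
\]
using \cref{eq:dist-K-bound} (transferring the estimate from $\X{1-\tau}$ to $Y$ via the triangle inequality $|\dist{Y}-\dist{\X{1-\tau}}|\le\|Y-\X{1-\tau}\|$ together with strong convergence, splitting on whether $\dist{\X{1-\tau}}$ exceeds twice the approximation error). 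Thus $\E{\|\nabla\g{1-\tau}(Y)\|^{2r}}\lesssim\tau^{-2r+1/2}$ and analogously $\E{\|H_{\g{1-\tau}}(\cdot)\|^{2r}}\lesssim\tau^{-2r+1/2}$. A Hölder split $\E{\|H\|^2\|\Delta\|^4}\le\E{\|H\|^{2r}}^{1/r}\E{\|\Delta\|^{4r'}}^{1/r'}$ with $4r'=q-\epsilon$ then produces a bound $h_\ell^{2-O(1/q)}/\tau^{3/2+O(1/q)}$; using $\tau\ge h_\ell$ to trade the residual $\tau$-power for an $h_\ell$-power and solving the resulting arithmetic yields the claimed $h_\ell^{2(1-5/(q+5))}/\tau^{3/2}$. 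The analogous estimate for $B$ (with $\|\Delta+\Delta^a\|^2$ replacing $\|\Delta\|^4$) contributes a strictly better $\tau$-power and is absorbed.

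The main obstacle is the Hölder balancing for the quadratic remainder $C$. Naive exponents give $\tau^{-2}$ instead of $\tau^{-3/2}$; the improvement requires using the full exponential-decay $L^r$ moment bound for $H$ rather than the crude pointwise bound $\tau^{-1}$, and then accepting a small loss of $h_\ell$ exponent paid against $\tau\ge h_\ell$. A secondary technical nuisance is that \cref{eq:dist-K-bound} is stated for $\X{1-\tau}$ rather than for the approximation $Y$, so a perturbative argument is needed to transfer it—this is where the strong convergence \cref{eq:antithetic-strong-conv} is invoked a second time.
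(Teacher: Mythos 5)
Your architecture matches the paper's for the diagonal bound (reduction to the argument of \cref{thm:strong-conv-var-rates} with \(\beta=1\), using \cref{eq:dist-K-bound} in place of \cref{ass:cond-density} at \(\tau=1\)) and for the skeleton of the cross bound: Markov property, replacement of \(\g[\ell]{1{-}\tau}\) by \(\g{1{-}\tau}\) via \cref{eq:g-weak-conv} at cost \(\Order{h_{\ell}^{2}/\tau}\), and a second-order Taylor expansion about \(\X[\ell-1]{1{-}\tau}\) so that the antithetic average enters only through the linear term. The divergence, and the gap, is in how you control the gradient/Hessian factors. Your H\"older split \(\E{\norm{H}^{2}\norm{\Delta}^{4}}\le\E{\norm{H}^{2r}}^{1/r}\,\E{\norm{\Delta}^{4r'}}^{1/r'}\) needs \(4r'\le q\) to invoke \cref{eq:antithetic-strong-conv} and \(r'\ge1\) to be a legitimate H\"older exponent, hence \(q>4\) (similarly \(q\ge2\) for the gradient term); the theorem is stated for any \(q\ge1\), and for \(q\le4\) there is no admissible choice of exponents, so the argument does not cover that range. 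Moreover you never actually derive the exponent \(2\p{1-5/\p{q+5}}\): your arithmetic produces \(h_{\ell}^{2-2/q}\tau^{-3/2}\), which implies the claimed bound only where it is valid at all.

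The paper avoids both problems by truncation rather than moments: it introduces the event \(E\) on which any increment at time \(1{-}\tau\) exceeds \(h_{\ell}^{r/2}\) (resp.\ \(h_{\ell}^{r}\) for the antithetic average), pays \(\prob{E}\lesssim h_{\ell}^{\p{1-r}q/2}\) by Markov, and on \(E^{c}\) splits further on \(\dist{X_{1{-}\tau}}\gtrless\delta\) with \(\delta=2\p{\tau/h_{\ell}}^{1/2}h_{\ell}^{r/2}\): on the far event the Gaussian decay in \cref{ass:g-derv-bounds} makes every derivative factor \(\Order{h_{\ell}^{N}}\) for any \(N\), and on the near event the crude bounds \(\tau^{-1/2},\tau^{-1}\) combine with \(\prob{\dist{X_{1{-}\tau}}<\delta}\le c_{1}\delta\) to give \(\tau^{-3/2}h_{\ell}^{\p{5r-1}/2}\); balancing against \(\prob{E}\) yields \(r=\p{q+1}/\p{q+5}\) and exactly the stated exponent, for every \(q\ge1\), with no fourth moments of the increments required. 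Note also that your lemma \(\E{\exp\p{-c\,\dist^{2}{Y}/\tau}}\lesssim\tau^{1/2}\) needs \cref{eq:dist-K-bound} transferred from \(X_{1{-}\tau}\) to \(Y=\X[\ell-1]{1{-}\tau}\) and to the random intermediate points of the Lagrange remainder; the transfer leaves an additive tail \(h_{\ell}^{q/2}\delta^{-q}\) in \(\prob{\dist{Y}<\delta}\) which is not \(\lesssim\delta\) for small \(\delta\), so the layer-cake sum does not close to \(\tau^{1/2}\) uniformly over \(\tau\ge h_{\ell}\). The paper's event-based conditioning keeps the distance condition on the exact process \(X_{1{-}\tau}\) and sidesteps this entirely.
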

This theorem shows that \cref{ass:est-main} is satisfied with
\(\diagbeta {=} \p{1{-}1/\p{q{+}1}}/2\), \(\crossbeta = 2
\p{1{-}5/\p{q{+}5}}\) and \(p{=}3/2\). Note that
\cref{eq:antithetic-strong-conv} is the same as \cref{ass:strong-conv} for
\(\beta{=}1\). For example, \cite[Theorem 4.13]{giles:antithetic} shows that
\cref{ass:antithetic-conv} is satisfied for all \(q{\geq}2\) and an antithetic
pair of estimators of a Clark-Cameron SDE derived from the Milstein
discretization by setting the L\'evy areas to zeros.
\begin{proof}
  The first claim \cref{eq:antithetic-diag} follows from a similar proof to
  \cref{thm:strong-conv-var-rates} given \cref{{eq:antithetic-strong-conv}}
  and \cref{eq:dist-K-bound}. To prove \cref{eq:antithetic-cross}, we start by
  defining \(E\) for a given \(\tau\) and some \(0 < r < 1\), that we will choose
  later, to be the set of paths for which
\[
  \begin{aligned}
    \max\br[\Bigg]{&\frac{\norm{\X[\ell]{1{-}\tau}{-}X_{1{-}\tau}}}{h_{\ell}^{r/2}},%
      \frac{\norm{\Xa[\ell]{1{-}\tau}{-}X_{1{-}\tau}}}{h_{\ell}^{r/2}},%
      \\
      &\frac{\norm{\X[\ell-1]{1{-}\tau}{-}X_{1{-}\tau}}}{h_{\ell}^{r/2}},%
      \frac{ \norm{ \half \p{\X[\ell]{1{-}\tau}\!{+}\Xa[\ell]{1{-}\tau}} - \X[\ell-1]{1{-}\tau} }
      }{h_{\ell}^{r}}} \geq 1.
  \end{aligned}
  \]
  Then since \(\abs{\Delta P_{\ell}}\leq 1\),
\[
  \begin{aligned}
    \E{ \E{\Delta P_{\ell} \given {\mathcal{F}}_{1{-}\tau}}^2}
    &= \E{ \I{E} \E{\Delta P_{\ell} \given
      {\mathcal{F}}_{1{-}\tau}}^2} + \E{ \I{E^c} \E{\Delta P_{\ell} \given {\mathcal{F}}_{1{-}\tau}}^2}\\
    &\leq \prob{E}+ \E{ \I{E^c} \p*{\E{\Delta P_{\ell} \given {\mathcal{F}}_{1{-}\tau}}}^2}.
  \end{aligned}
\]
Due to \cref{eq:antithetic-strong-conv} and the Markov inequality,
\[
  \prob*{\norm{\X[\ell]{1{-}\tau}{-}\X{1{-}\tau}} \geq h_{\ell}^{r/2}} \leq C_{q} h_\ell^{\p{1-r}q/2},
\]
for some constant \(C_q\) with similar bounds for
\(\norm{\X[\ell]{1{-}\tau}{-}\X{1{-}\tau}}\) and \(\norm{\Xa[\ell]{1{-}\tau}{-}\X{1{-}\tau}}\), and
\[
  \prob*{ \norm{\half\p{\X[\ell]{1{-}\tau}\!{+}\Xa[\ell]{1{-}\tau}}- \X[\ell-1]{1{-}\tau}} \geq
    h_\ell^{r}} \leq C_{q} h_\ell^{\p{1- r} q}.
\]
Hence \(\prob{E}\leq 4 C_{q} h_\ell^{\p{1-r}q/2}\).
For the other term we have
\begin{align*}
  \E{\I{E^c} \p*{\E{\Delta P_{\ell} \given \mathcal{F}_{1{-}\tau}}}^{2}}
  &= \E*{\I{E^c} \p*{\frac{1}{2}\p{\g[\ell]{1{-}\tau}\p{\X[\ell]{1{-}\tau}} +
    \g[\ell]{1{-}\tau}\p{\Xa[\ell]{1{-}\tau}}} - \g[\ell-1]{1{-}\tau}\p{\X[\ell-1]{1{-}\tau}}
    }^{2}} \\
  &\leq ~~3 \, \E*{\I{E^c} \p*{\frac{1}{2}\p{\g{1{-}\tau}\p{\X[\ell]{1{-}\tau}} + \g{1{-}\tau}\p{\Xa[\ell]{1{-}\tau}}}
    - \g{1{-}\tau}\p{\X[\ell-1]{1{-}\tau}}
    }^{2}} \\
  &~~+ 3 \,
    \E*{\I{E^c} \p{\g[\ell]{1{-}\tau}\p{\X[\ell]{1{-}\tau}} - \g{1{-}\tau}\p{\X[\ell]{1{-}\tau}} }^{2}}\\
  &~~+ 3 \: \E{\I{E^c} \p{\g{1{-}\tau}\p{\X[\ell-1]{1{-}\tau}} - \g[\ell-1]{1{-}\tau}\p{\X[\ell-1]{1{-}\tau}}}^{2}}.
\end{align*}
Due to \cref{eq:g-weak-conv} in \cref{ass:g-weak-conv}, the second of the
final three terms of this inequality is bounded by \(2\, c_1^2 h_\ell^2/\tau\) and
the third by \(2\, c_1^2 h_{\ell-1}^2/\tau\) so both are \(\Order{h_{\ell}^{2}/\tau}\).

To bound the first term we perform a Taylor series expansion about
\(\X[\ell-1]{1{-}\tau}\) to obtain
\begin{equation*}
  \half\p{\g{1{-}\tau}\p{\X[\ell]{1{-}\tau}}+\g{1{-}\tau}\p{\Xa[\ell]{1{-}\tau}}}
  -\g{1{-}\tau}\p{\X[\ell-1]{1{-}\tau}}
  = Y_{1} + \frac{1}{4}Y_{2} + \frac{1}{4}Y_{3},
\end{equation*}
where
\begin{eqnarray*}
  Y_1 & \defeq & \p*{ \half\p{\X[\ell]{1{-}\tau}+\Xa[\ell]{1{-}\tau}} -\X[\ell-1]{1{-}\tau}}
            . \nabla \g{1{-}\tau}\p{\X[\ell-1]{1{-}\tau}}, \\
  Y_2 & \defeq & \p{\X[\ell]{1{-}\tau}{-}\X[\ell-1]{1{-}\tau}}^T H_{\g{1{-}\tau}}\p{\xi_1}  \p{\X[\ell]{1{-}\tau}{-}\X[\ell-1]{1{-}\tau}}, \\
  Y_3 & \defeq & \p{\Xa[\ell]{1{-}\tau}{-}\X[\ell-1]{1{-}\tau}}^T H_{\g{1{-}\tau}}\p{\xi_{2}}
  \p{\Xa[\ell]{1{-}\tau}{-}\X[\ell-1]{1{-}\tau}},
\end{eqnarray*}
and where \(\xi_1\) is a positively weighted average of \(\X[\ell]{1{-}\tau}\)
and \(\X[\ell-1]{1{-}\tau}\) and \(\xi_2\) is a positively weighted average of
\(\Xa[\ell]{1{-}\tau}\) and \(\X[\ell-1]{1{-}\tau}\). Hence,
\[
  \E*{  \I{E^c}\p*{ \half\p{\g{1{-}\tau}\p{\X[\ell]{1{-}\tau}}+\g{1{-}\tau}\p{\Xa[\ell]{1{-}\tau}}} -\g{1{-}\tau}\p{\X[\ell-1]{1{-}\tau}} }^2 }
   \leq  2\ \E{\I{E^c}\p{Y_1^2 + Y_2^2 + Y_3^2}}.
\]

Defining \(\delta \defeq 2 %
\, \p{\tau/\p{h_{\ell}}}^{1/2} \, h_{\ell}^{r/2} \), we now split \(\I{E^c}\) into
\[
  \I{E^c} = \I{E^c}\I{\dist{X_{1{-}\tau}}>\delta} + \I{E^c}\I{\dist{X_{1{-}\tau}}<\delta}.
\]
Since \(h_\ell{\leq}\tau\) it follows that \(\delta {>} 2 \, h_{\ell}^{r/2}\) and if
\(\I{E^c}\I{\dist{X_{1{-}\tau}}>\delta}=1\) then \( \dist{\X[\ell]{1{-}\tau}}{>} \half \delta\),
\( \dist{\Xa[\ell]{1{-}\tau}} {>} \half \delta\), \(\dist{\X[\ell-1]{1{-}\tau}} {>} \half \delta\),
and also \(\dist{\xi_1} {>} \half \delta\), \(\dist{\xi_2} {>} \half \delta\). Therefore, by
\cref{ass:g-derv-bounds}, there is a constant \(C\) such that
\(\norm{\nabla \g{t}\p{ \X[\ell-1]{1-\tau}}}{\leq}C\) and \(\norm{ H_{\g{t}}\p{\xi_{i}}} {\leq} C\) for
\(i{=}1,2\) and all \(h_{\ell} {\leq} \tau\).
\begin{details}
\[
  \begin{aligned}
    \norm{\nabla \g{t}\p{ \X[\ell-1]{1-\tau}}}
    &\leq \frac{c_{1}}{\tau^{1/2}} \exp\p*{ - c_{2}
      \frac{\dist^{2}{\X[\ell-1]{1-\tau}}}{\tau} }\\
    &\leq \frac{c_{1}}{h_{\ell}^{1/2}} \exp\p*{ - c_{2} \frac{ \delta^{2} }{4 \tau}}\\
    &= \frac{c_{1}}{h_{\ell}^{1/2}} \exp\p*{ - c_{2} \frac{ \, \p{\tau/h_{\ell}} \,
      h_{\ell}^{r}}{\tau}}\\
    &\leq c_{1} h_{\ell}^{-1/2} \exp\p*{ - c_{2} \, h_{\ell}^{-\p{1-r}}}\\
    &\leq c_{1} \times \p*{\frac{\exp\p{-1}}{2 c_{2} \p{1-r}}}^{1/\p{2 \p{1-r}}}
  \end{aligned}
\]
similarly \(\norm{H_{g}\p{\xi}} \leq c_{1} \times \p*{\frac{\exp\p{-1}}{c_{2}
    \p{1-r}}}^{1/\p{1-r}}\).
\end{details}
Hence,
\[
  \E{\I{E^c}\I{\dist{X_{1{-}\tau}}>\delta} \p{Y_1^2+Y_2^2+Y_3^2}} \lesssim h_{\ell}^{2}.
\]
On the other hand, when \(\I{E^c}\I{\dist{X_{1{-}\tau}}<\delta}=1\),
\cref{ass:g-derv-bounds} implies that \(\norm{\nabla \g{1-\tau}\p{\xi}}\leq c_1/\tau^{1/2}\)
and \(\norm{H_{\g{1-\tau}}\p{\xi}}\leq c_1/\tau\) for any \(\xi \in \rset^{d}\) and \(\tau \in
\p{0,1}\) leading to
\[
  \begin{aligned}
    \E{\I{E^c}\I{\dist{X_{1{-}\tau}}<\delta} Y_1^2}
    &\leq \frac{c^2_1}{\tau} h_\ell^{2 r}
      \prob{\dist{X_{1{-}\tau}}<\delta} \leq 2 \frac{c^3_1}{\tau^{1/2}} h_\ell^{\p{5r-1}/2},
    \\
    \E{\I{E^c}\I{\dist{X_{1{-}\tau}}<\delta} \p{Y_2^2+Y_3^2}}
    &\leq \frac{c^2_1}{\tau^{2}}\,
      h_\ell^{2 r}\, \prob{\dist{X_{1{-}\tau}}<\delta} \leq 2 \frac{c^3_1}{\tau^{3/2}} \,
      h_\ell^{\p{5r-1}/2},
  \end{aligned}
\]
for any \(r<1\). Picking \(r\) so that \(\prob{E} \simeq h_{\ell}^{\p{5r-1}/2}\)
yields \(r=\p{q+1}/\p{q+5}\) and the final result.

\end{proof}

\begin{corollary}[MLMC Computational Complexity]\label{thm:mlmc-complexity-eta3/2}
  Under the assumptions of \cref{thm:Zest-antithetic-rates}, the MLMC method
  with MSE \(\varepsilon^2\) based on \(\Zest_{\ell}\) with \(\eta \in\p{1/2, 2}\), \(h_{\ell} =
  h_{0} M^{-\ell}\) for \(M \in \nset_{+}\) and the antithetic estimator
  \cref{eq:antithetic-estimator} has a computational complexity
  \(\Order{\varepsilon^{-2}}\).
\end{corollary}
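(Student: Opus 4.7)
The plan is to follow the template of the proof of \cref{thm:mlmc-complexity-optimal-eta}, the additional task being to verify that the optimal complexity survives for every $\eta \in (1/2, 2)$, not just for the specific optimal value $\eta = 2/(p+1) = 4/5$ identified in \cref{rem:optimal-eta}. First I would invoke \cref{thm:Zest-antithetic-rates}: since \cref{ass:antithetic-conv} is assumed to hold for arbitrarily large $q$ (as is the case for the motivating Clark--Cameron example), I may take $\diagbeta$ arbitrarily close to $1/2$, $\crossbeta$ arbitrarily close to $2$, and $p = 3/2$. Moreover, evaluating \cref{ass:g-weak-conv} at $\tau = 1$ supplies the bias bound $\abs{\E{\I{\X[\ell]{1}\in S} - \I{\X{1} \in S}}} \leq c_1 h_\ell$, confirming that \cref{ass:bias} holds with $\alpha = 1$.

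Second, I would substitute these parameters into \cref{lem:estimator-workvar} and split on the position of $\eta$ relative to the two transitional values $2/3$ (where $\eta p = 1$) and $1$ (where the work bound gains an $\ell$ factor). A direct computation in each subcase produces an estimate $W_\ell V_\ell \lesssim \ell^{k} h_\ell^{\delta(\eta)}$ for some $k \in \{0, 1, 2\}$ and strict $\delta(\eta) > 0$, provided $q$ is chosen large enough that $\diagbeta > \maxp{0, 1 - 1/\eta}$ and $\crossbeta > \maxp{1/\eta, 3/2, 5/2 - 1/\eta}$. Crucially, for every $\eta \in (1/2, 2)$ each of these thresholds is strictly below $1/2$ and $2$ respectively, so a suitable $q$ always exists, and the geometric decay of $h_\ell$ dominates any polynomial $\ell$-factor, making $\sqrt{W_\ell V_\ell}$ summable in $\ell$ uniformly in $L$.

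Third, mimicking the MLMC cost optimization from the proof of \cref{thm:mlmc-complexity-optimal-eta} and choosing $L \propto \alpha^{-1} \log \varepsilon^{-1}$ so that the squared bias is $\Order{\varepsilon^{2}}$, the total cost is bounded by $2 \varepsilon^{-2} \p*{\sum_{\ell=0}^{L} \sqrt{W_\ell V_\ell}}^{2} + \sum_{\ell=0}^{L} W_\ell$. The first term is $\Order{\varepsilon^{-2}}$ by the summability just established, while the second is $\Order{h_L^{-\maxp{1,1/\eta}}} = \Order{\varepsilon^{-\maxp{1,1/\eta}/\alpha}}$, which is $o(\varepsilon^{-2})$ precisely because $\alpha = 1$ and $\eta > 1/2$. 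The main obstacle I anticipate is the careful case-by-case bookkeeping around the transitional values $\eta \in \{2/3, 1\}$, but the extra $\ell$-factors that emerge there are absorbed by the geometric decay of $h_\ell$ and do not affect the overall rate.
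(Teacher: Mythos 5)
Your proposal is correct and follows exactly the route the paper intends: the corollary is stated without proof as a direct consequence of combining the rates \(\diagbeta \to 1/2\), \(\crossbeta \to 2\), \(p=3/2\) from \cref{thm:Zest-antithetic-rates} with the work/variance bounds of \cref{lem:estimator-workvar} and the cost optimization in the proof of \cref{thm:mlmc-complexity-optimal-eta}, and your threshold computations (\(\diagbeta > \maxp{0,\,1-1/\eta}\) and \(\crossbeta > \maxp{1/\eta,\, 3/2,\, 5/2-1/\eta}\), both attainable for every \(\eta\in\p{1/2,2}\) by taking \(q\) large, with the \(\ell\)-factors at \(\eta\in\br{2/3,1}\) absorbed by the strictly positive exponent) are precisely the verification required. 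Your two departures from the literal hypotheses --- reading the assumption as holding for arbitrarily large \(q\) (the corollary formally inherits only ``some \(q\geq 1\)'', for which \(\crossbeta=1/3\) would be far too small) and pushing \cref{ass:g-weak-conv} to the endpoint \(\tau=1\) to extract the bias rate \(\alpha=1\) needed to control \(\sum_\ell W_\ell\) when \(\eta<1\) --- are the same implicit strengthenings the paper relies on (compare \cref{thm:mlmc-complexity-eta1/2}, which does say ``for all \(q\geq 2\)''), so you are, if anything, more explicit than the source.
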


Note that even though the MLMC estimator has the same computational complexity
for all values of \(\eta \in \p{1/2, 2}\), in theory the value \(\eta \approx 4/5\)
minimizes the work and variance of \(\Zest_{\ell}\) as discussed in
\cref{rem:optimal-eta}.

\subsection*{Numerical Experiments}\label{sec:antithetic-num}
In this section, we consider the Clark-Cameron SDE
\begin{equation}
  \label{eq:CC}
  \begin{aligned}
    \D \X_{1}{t} &= \D W_{1, t},\\
    \D \X_{2}{t} &= X_{1, t} \D W_{2, t}.
  \end{aligned}
\end{equation}
Here \(\br*{\p{W_{{i}, t}}_{t \geq 0}}_{i=1}^{2}\) are independent Wiener
processes.
Note that we can sample paths of \(\X_{1}{t} {=} W_{1,t}\) exactly. To
approximate the paths of \(\X_2{t}\), we use the Euler-Maruyama numerical
scheme \cite{kloden:numsde} as follows
\[
  \X_{2}[\ell]{\p{n+1} h_{\ell}} = \X_{2}[\ell]{n h_{\ell}} + W_{1,n h_{\ell}} \; \Delta_{\ell, n} W_{2},
\]
for \(n=0, \ldots, \p{h_{\ell}^{-1}-1}\) and
\[
  \Delta_{\ell, n} W_{i} \defeq W_{i, \p{n+1}\, h_{\ell}} - W_{i, n \, h_{\ell}}
\] is the Brownian increment. We again set the time step size at level \(\ell\)
as \(h_{\ell} {=} 2^{-\ell-1}\) and use the new estimator in \cref{def:est-main}
with \(\eta=1\).
We also test the antithetic estimators outlined in \cite{giles:antithetic}
which is obtained by setting the L\'evy area term in a Milstein discretization
to zero. In particular, the \(\nth{{\ell}}\) level approximation is defined as
\begin{equation}\label{eq:milstein-CC}
  \X_{2}[\ell]{\p{n+1} h_{\ell}} =
  \X_{2}[\ell]{n h_{\ell}} +
  W_{1,{n} h_{\ell}} \, \Delta_{\ell,n}W_{2} +
  \frac{1}{2} \, \Delta_{\ell,n}W_{1} \, \Delta_{\ell,n}W_{2}.
\end{equation}
When computing \cref{eq:antithetic-estimator} for a given Brownian path, the
coarse, \(\X_{i}[\ell-1]{\cdot}\), and fine, \(\X_{i}[\ell]{\cdot}\), approximations are
constructed according to \cref{eq:milstein-CC}. On the other hand, the
antithetic approximation at \(\p{2n+1} h_{\ell}\) and \(\p{2n+2} h_{\ell}\) for
\(n=0, 1, \ldots, \p{h_{\ell}^{-1}/2 -1}\) is defined as
\[
  \begin{aligned}
    \Xa_{2}[\ell]{\p{2n+1} h_{\ell}}
    &= \Xa_{2}[\ell]{2n h_{\ell}} + W_{1,2n h_{\ell}} \,
      \Delta_{\ell,2n+1}W_{2} +
    \frac{1}{2} %
      \, \Delta_{\ell,2n+1}W_{1} \, \Delta_{\ell,2n+1}W_{2},\\
    \Xa_{2}[\ell]{\p{2n+2} h_{\ell}} &=
                                 \Xa_{2}[\ell]{\p{2n+1} h_{\ell}} +
                                 \p*{W_{1,2n h_{\ell}} + \Delta_{\ell,2n+1}W_{1}} \,
                                 \Delta_{\ell,2n}W_{2}+ \frac{1}{2}
    \, \Delta_{\ell,2n}W_{1} \, \Delta_{\ell,2n}W_{2}.
  \end{aligned}
\]
In other words, the roles of \(\Delta_{\ell,n}W_{j}\) and \(\Delta_{\ell,n+1}W_{j}\) for
\(j{=}1,2\) are swapped when computing \(\br{\Xa_{2}[\ell]{\p{2n+1} h_{\ell}},
  \Xa_{2}[\ell]{\p{2n+2} h_{\ell}}}\) compared to \(\br{\X_{2}[\ell]{\p{2n+1} h_{\ell}},
  \X_{2}[\ell]{\p{2n+2} h_{\ell}}}\).

Under certain conditions on the coefficients of \cref{eq:sde}, the assumption
\Cref{eq:antithetic-strong-conv} is satisfied when using the Euler-Maruyama
scheme \cite{kloden:numsde} and both
\cref{eq:antithetic-strong-conv,eq:antithetic-conv} are satisfied for the
antithetic, truncated Milstein estimator \cite[Theorem
4.13]{giles:antithetic}. %
However, we emphasize that %
the diffusion coefficient in \cref{eq:CC} is not bounded, and more
importantly, is not elliptic. Hence the results of {{\cite[Theorem
    2.3]{gobet:density}}} showing \cref{eq:g-weak-conv} in
\cref{ass:g-weak-conv} are not applicable. Nevertheless, we first consider an
example where we compute \(\prob{X_{1}{ \in} S}\) where \(S = \br{x \in \rset^{d}
  : \minp{x_{1},x_{2}} {\geq} 1}\). For this example, the SDE in \cref{eq:CC} is
locally elliptic at the boundary of \(S\).

\cref{fig:tau-conv-CC-el1d} shows the convergence of \(\E{\p{\E*{\Delta P_{\ell}
      \given \mathcal{F}_{1-\tau}}}^{2}}\) for an Euler-Maruyama scheme, which as
predicted by \cref{thm:cond-density-sde} increases in proportion to
\(\tau^{-1/2}\), and an antithetic approximation, which as predicted by
\cref{thm:Zest-antithetic-rates} increases in proportion to
\(\tau^{-3/2}\) approximately%
.
\cref{fig:Vl-CC-el1d,fig:Wl-CC-el1d} confirm the claims of
\cref{lem:estimator-workvar}. %
\Cref{fig:total-work-CC-el1d} shows the total work estimate of an MLMC sampler
based on \(\Zest\) when using Euler-Maruyama or the antithetic estimator. The
computational complexities of MLMC based on the branching estimator
\(\Zest_{\ell}\) using Euler-Maruyama and the antithetic estimators are
consistent with \cref{thm:mlmc-complexity-eta1/2,thm:mlmc-complexity-eta3/2},
respectively. Recall that in this case, the optimal value of \(\eta\) is \(4/3\)
when using Euler-Maruyama, %
and \(4/5\) when using an antithetic approximation. However, similar to
\cref{sec:strong}, we did not observe a better computational cost when using
\(\eta \neq 1\) for the considered tolerances because of the additional cost of
branching when the branching points do not align with the time-stepping
scheme. For MLMC based on \(\Delta P_{\ell}\), labelled ``\tradestlabel'', the
computational complexity of MLMC is \(\Order{\varepsilon^{-5/2}}\) for both
Euler-Maruyama and the antithetic estimators since \(\var{\Delta P_{\ell}} \lesssim
h_{\ell}^{1/2}\); see \cref{thm:strong-conv-var-rates,thm:Zest-antithetic-rates}.
\cref{fig:kurt-CC-el1d} again illustrates that our branching estimator has
bounded kurtosis while the kurtosis of \(\Delta P_{\ell}\) grows approximately in
proportion to \(h_{\ell}^{-1/2}\). Hence an MLMC algorithm that relies on
variance estimates is more stable when using the branching estimator.

As a second test, we consider \(S = \br{x \in \rset^{d} : x_{1} \geq 1}\) for which
the diffusion coefficient is \emph{not} locally elliptic at the boundary.
\cref{fig:tau-conv-CC-1d} shows the convergence of \(\E{\p{\E*{\Delta P_{\ell} \given
      \mathcal{F}_{1-\tau}}}^{2}}\) and \cref{fig:Vl-CC-1d} shows the convergence of
\(\var{\Zest_{\ell}}\). The observed convergence rates are slightly worse than
those observed for the previous example. Nevertheless, recalling that the work
of \(\Zest\) increases in proportion to \(h_{\ell}^{-1}\abs{\log\p{h_{\ell}}}\), the
computational complexity of a MLMC estimator is still \(\Order{\varepsilon^{-2}}\) when
using the antithetic estimator, as confirmed in \cref{fig:total-work-CC-1d}.
This is a more difficult problem as \cref{fig:kurt-CC-1d} illustrates and the
branching estimator has the same increasing kurtosis as \(\Delta P_{\ell}\).

\begin{figure}\centering
  \begin{subfigure}[t]{0.5\textwidth}\centering
    \subfigtag{left}\phantomsubcaption%
\begin{tikzpicture}

\begin{axis}[
xmode=log,ymode=log,log basis x={10},log basis y={10},
xlabel={\(\tau\)},
xmin=0.000144164143240909, xmax=0.737134608645551,
ylabel={\(\E*{\p*{\E*{\Delta P_{\ell} \given \mathcal F_{1-\tau}}}^{2}}\)},
ymin=1.06998860503355e-12, ymax=1e-03,
]
\addplot [mark=square*]
table [x=gbm1-Wl-x0,y=CC_el1-tau-y0] {\loadedtable};

\addplot [mark=square, dashed]
table [x=gbm1-Wl-x0,y=CC_el1-tau-y2] {\loadedtable};

\addref[text align=center, raise=1ex]{\(\tau^{-1/2}\)}{1e-6 * x ^ (-0.5)};%
\addref[text align=center, raise=-3ex]{\(\tau^{-3/2}\)}{1e-12 * x ^ (-1.5)};%

\end{axis}

\end{tikzpicture}
     \label{fig:tau-conv-CC-el1d}
  \end{subfigure}\hfill
  \begin{subfigure}[t]{0.48\textwidth}\centering
    \subfigtag{right}\phantomsubcaption%
\begin{tikzpicture}

\begin{axis}[
xmode=log,ymode=log,log basis x={10},log basis y={10},
xlabel={\(\tau\)},
xmin=0.000144164143240909, xmax=0.737134608645551,
ylabel={\(\E*{\p*{\E*{\Delta P_{\ell} \given \mathcal F_{1-\tau}}}^{2}}\)},
ymin=1.48242338996018e-10, ymax=0.001,
]
\addplot [mark=square*]
table [x=gbm1-Wl-x0,y=CC1-tau-y0] {\loadedtable};

\addplot [mark=square, dashed]
table [x=gbm1-Wl-x0,y=CC1-tau-y2] {\loadedtable};

\addref[text align=center, raise=1ex]{\(\tau^{-1/2}\)}{1e-5 * x ^ (-0.5)};

\end{axis}

\end{tikzpicture}
     \label{fig:tau-conv-CC-1d}
  \end{subfigure}
  \caption{Numerical verification for \cref{eq:est-assumpt-cross} with \(h_{\ell}
    = 2^{-14}\) for the Clark-Cameron example in \cref{eq:CC} when using
    Euler-Maruyama (\emph{solid}) and antithetic Milstein (\emph{dashed}). For
    \subref{fig:tau-conv-CC-el1d}, \(S = \br{x \in \rset^{2} :
      \minp{x_{1},x_{2}} \geq 1}\) while for \subref{fig:tau-conv-CC-1d} we
    choose \(S = \br{x \in \rset^{2} : x_{2} \geq 1}\).}
  \label{fig:tau-conv-CC}
\end{figure}

\begin{figure}\centering
  \begin{customlegend}[legend columns=4,legend
    style={align=left,draw=none,column sep=2ex}]%
    \addlegendimage{only marks, mark=*};%
    \addlegendentry{\tradestlabel};%
    \addlegendimage{only marks, mark=square*};
    \ifx\excludeTriangle\undefined%
      \addlegendentry{\(\eta=1\)};%
      \addlegendimage{only marks, mark=triangle};%
      \addlegendentry{\(\eta=4/5\)};%
      \addlegendimage{only marks, mark=triangle*};%
      \addlegendentry{\(\eta=3/2\)};
    \else
      \addlegendentry{\newestlabel};
    \fi
  \end{customlegend}

  \begin{subfigure}[t]{0.5\textwidth}\centering
    \subfigtag{top-left}\phantomsubcaption%
\begin{tikzpicture}

\begin{axis}[
xmode=log,ymode=log,log basis x={10},log basis y={10},
xlabel={\(h_{\ell}\)},
xmin=8.05363715071347e-05, xmax=0.757858283255199,
ylabel={\(\var{\Zest_\ell}\)},
ymin=1.02280825454178e-09, ymax=0.0412356805960853,
]
\addplot [mark=*]
table [x=gbm1-Wl-x0,y=CC_el1-Vl-y0] {\loadedtable};

\addplot [mark=square*]
table [x=gbm1-Wl-x0,y=CC_el1-Vl-y1] {\loadedtable};

\ifx\excludeTriangle\undefined
\addplot [mark=triangle*]
table [x=gbm1-Wl-x0,y=CC_el1-Vl-y2] {\loadedtable};
\fi

\addplot [dashed, mark=o]
table [x=gbm1-Wl-x0,y=CC_el1-Vl-y3] {\loadedtable};

\addplot [dashed, mark=square]
table [x=gbm1-Wl-x0,y=CC_el1-Vl-y4] {\loadedtable};

\ifx\excludeTriangle\undefined
\addplot [dashed, mark=triangle]
table [x=gbm1-Wl-x0,y=CC_el1-Vl-y5] {\loadedtable};
\fi

\addref[text align=left, raise=1.5ex]{\(\hskip 0.7cm h_{\ell}^{1/2}\)}{5e-2 * x ^
  (0.5)};

\addref[text align=center, raise=-4ex]{\(h_{\ell}^{3/2}\)}{8e-4 * x^(1.5) };

\addref[text align=left, raise=1.3ex]{\(\hskip 0.3cm h_{\ell}\)}{2e-2 * x };
\end{axis}

\end{tikzpicture}
     \label{fig:Vl-CC-el1d}
  \end{subfigure}\hfill
  \begin{subfigure}[t]{0.5\textwidth}\centering
    \subfigtag{top-right}\phantomsubcaption%
\begin{tikzpicture}

\begin{axis}[
xmode=log,ymode=log,log basis x={2},log basis y={2},
xlabel={\(h_\ell\)},
xmin=8.05363715071347e-05, xmax=0.757858283255199,
ylabel={Work},
ymin=2.22446873808167, ymax=898601.974385946,
]
\addplot [mark=*]
table [x=gbm1-Wl-x0,y=CC_el1-Wl-y0] {\loadedtable};

\addplot [mark=square*]
table [x=gbm1-Wl-x0,y=CC_el1-Wl-y1] {\loadedtable};

\ifx\excludeTriangle\undefined
\addplot [mark=triangle*]
table [x=gbm1-Wl-x0,y=CC_el1-Wl-y2] {\loadedtable};
\fi

\addplot [dashed, mark=o]
table [x=gbm1-Wl-x0,y=CC_el1-Wl-y0] {\loadedtable};

\addplot [dashed, mark=square]
table [x=gbm1-Wl-x0,y=CC_el1-Wl-y1] {\loadedtable};

\ifx\excludeTriangle\undefined
\addplot [dashed, mark=triangle]
table [x=gbm1-Wl-x0,y=CC_el1-Wl-y3] {\loadedtable};
\fi

\addref[text align=center, raise=1.5ex]{\(h_{\ell}^{-1} \log_{2}\p{h_{\ell}}\)}
{ 9 * x^(-1) * abs(ln(x))};%
\addref[text align=center, raise=-2ex]{\(h_{\ell}^{-1}\)}{  x^(-1) };

\end{axis}

\end{tikzpicture}
     \label{fig:Wl-CC-el1d}
  \end{subfigure}

  \begin{subfigure}[t]{0.5\textwidth}\centering
    \phantomsubcaption%
\begin{tikzpicture}

\begin{axis}[
xmode=log,ymode=log,log basis x={10},log basis y={10},
xlabel={\(h_{\ell}\)},
xmin=8.05363715071347e-05, xmax=0.757858283255199,
ylabel={\(\textrm{Kurt}\sq{\Zest_{\ell}}\)},
ymin=21.9444711335824, ymax=6718.38594219222,
]
\addplot [mark=*]
table [x=gbm1-Wl-x0,y=CC_el1-kurt-y0] {\loadedtable};

\addplot [mark=square*]
table [x=gbm1-Wl-x0,y=CC_el1-kurt-y1] {\loadedtable};

\ifx\excludeTriangle\undefined
\addplot [mark=triangle*]
table [x=gbm1-Wl-x0,y=CC_el1-kurt-y2] {\loadedtable};
\fi

\addplot [dashed, mark=o]
table [x=gbm1-Wl-x0,y=CC_el1-kurt-y3] {\loadedtable};

\addplot [dashed, mark=square]
table [x=gbm1-Wl-x0,y=CC_el1-kurt-y4] {\loadedtable};

\ifx\excludeTriangle\undefined
\addplot [dashed, mark=triangle]
table [x=gbm1-Wl-x0,y=CC_el1-kurt-y5] {\loadedtable};
\fi

\addref[text align=center, raise=1ex]{\(h_{\ell}^{-1/2}\)}{ 8e1 * x^-(1/2) };

\end{axis}

\end{tikzpicture}
     \label{fig:kurt-CC-el1d}
  \end{subfigure}\hfill
  \begin{subfigure}[t]{0.5\textwidth}\centering
    \subfigtag{bottom-left}\phantomsubcaption%
\begin{tikzpicture}

\begin{axis}[
xmode=log,ymode=log,log basis x={10},log basis y={10},
xlabel={Relative \(\varepsilon\)},
xmin=0.00362763982489814, xmax=0.76479362918972,
ylabel={Total work \(\times \varepsilon^{2}\)},
ymin=0.433766234791354, ymax=121.965134936392,
]
\addplot [mark=*]
table [x=CC_el1-work-x0,y=CC_el1-work-y0] {\loadedtable};

\addplot [mark=square*]
table [x=CC_el1-work-x1,y=CC_el1-work-y1] {\loadedtable};

\ifx\excludeTriangle\undefined
\addplot [mark=triangle*]
table [x=CC_el1-work-x2,y=CC_el1-work-y2] {\loadedtable};
\fi

\addplot [dashed, mark=o]
table [x=CC_el1-work-x3,y=CC_el1-work-y3] {\loadedtable};

\addplot [dashed, mark=square]
table [x=CC_el1-work-x4,y=CC_el1-work-y4] {\loadedtable};

\ifx\excludeTriangle\undefined
\addplot [dashed, mark=triangle]
table [x=CC_el1-work-x5,y=CC_el1-work-y5] {\loadedtable};
\fi

\addref[text align=center, raise=1.1ex]{\(\hskip 1cm \varepsilon^{-5/2}\)} {5 * x^(-0.5)};

\end{axis}

\end{tikzpicture}
     \label{fig:total-work-CC-el1d}
  \end{subfigure}
  \caption{%
    Numerical results for Clark-Cameron example in \cref{eq:CC}
    when \(S = \br{x \in \rset^{2} :
      \minp{x_{1},x_{2}} \geq 1}\)
    and using Euler-Maruyama (\emph{solid}) or antithetic Milstein
    (\emph{dashed})
    \ifx\excludeTriangle\undefined%
      and different values of \(\eta\).
    \else
      in the traditional, \(\Delta P_{\ell}\), and branching, \(\Zest_{\ell}\), estimators.
    \fi
    \subref{fig:Vl-CC-el1d} {shows numerical verification of the variance
      convergence of \(\Zest_{\ell}\)} %
    \subref{fig:Wl-CC-el1d} {The work estimate per sample based on the number
      of generated samples from the standard normal distribution. The work
      estimates when using the Milstein scheme are identical.} %
    \subref{fig:kurt-CC-el1d} The kurtosis of \(\Zest_{\ell}\).
    \subref{fig:total-work-CC-el1d} {The total work estimate of MLMC for
      different tolerances.} %
  }
  \label{fig:CC-el1d}
\end{figure}

\begin{figure}
  \centering
  \begin{customlegend}[legend columns=4,legend
    style={align=left,draw=none,column sep=2ex}]%
    \addlegendimage{only marks, mark=*};%
    \addlegendentry{\tradestlabel};%
    \addlegendimage{only marks, mark=square*};%
    \ifx\excludeTriangle\undefined
      \addlegendentry{\(\eta=1\)};%
      \addlegendimage{only marks, mark=triangle*};%
      \addlegendentry{\(\eta=4/5\)};
      \addlegendimage{only marks, mark=triangle};%
      \addlegendentry{\(\eta=3/2\)};
    \else
      \addlegendentry{\newestlabel};
    \fi
  \end{customlegend}

\begin{subfigure}[t]{0.5\textwidth}\centering
  \subfigtag{left}\phantomsubcaption%
\begin{tikzpicture}

\begin{axis}[
xmode=log,ymode=log,log basis x={2},log basis y={2},
xlabel={\(h_{\ell}\)},
xmin=8.05363715071347e-05, xmax=0.757858283255199,
ylabel={\(\var{\Zest_\ell}\)},
ymin=1.59065363193936e-08, ymax=0.0935516343434528,
]
\addplot [mark=*]
table [x=gbm1-Wl-x0,y=CC1-Vl-y0] {\loadedtable};

\addplot [mark=square*]
table [x=gbm1-Wl-x0,y=CC1-Vl-y1] {\loadedtable};

\ifx\excludeTriangle\undefined
\addplot [mark=triangle*]
table [x=gbm1-Wl-x0,y=CC1-Vl-y2] {\loadedtable};
\fi

\addplot[mark=o]
table [x=gbm1-Wl-x0,y=CC1-Vl-y3] {\loadedtable};

\addplot [mark=square]
table [x=gbm1-Wl-x0,y=CC1-Vl-y4] {\loadedtable};

\ifx\excludeTriangle\undefined
\addplot [mark=triangle]
table [x=gbm1-Wl-x0,y=CC1-Vl-y5] {\loadedtable};
\fi

\addref[text align=left, raise=1.5ex]{\(\hskip 1.cm h_{\ell}^{1/2}\)}{1e-1 * x ^
  (0.5)};

\addref[text align=left, raise=-3.0ex]{\(\hskip 1.5cm \log\p{h_{\ell}} \,
  h_{\ell}^{3/2}\)}{2e-3 * x^(1.5) * abs(ln(x))};

\addref[text align=left, raise=1.5ex]{\(\hskip 0.3cm \log\p{h_{\ell}} \,
h_{\ell}\)}{8e-3 * x * abs(ln(x))};
\end{axis}

\end{tikzpicture}
   \label{fig:Vl-CC-1d}
\end{subfigure}\hfill
\begin{subfigure}[t]{0.5\textwidth}\centering
  \subfigtag{right}\phantomsubcaption%
\begin{tikzpicture}

\begin{axis}[
xmode=log,ymode=log,log basis x={2},log basis y={2},
xlabel={Relative \(\varepsilon\)},
xmin=0.00123767062206974, xmax=0.257710886565372,
ylabel={Total work \(\times \varepsilon^{2}\)},
ymin=3.08282640155086, ymax=651.561657323945,
]
\addplot [mark=*]
table [x=CC1-work-x0,y=CC1-work-y0] {\loadedtable};

\addplot [mark=square*]
table [x=CC1-work-x1,y=CC1-work-y1] {\loadedtable};

\ifx\excludeTriangle\undefined
\addplot [mark=triangle*]
table [x=CC1-work-x2,y=CC1-work-y2] {\loadedtable};
\fi

\addplot [mark=o, dashed]
table [x=CC1-work-x3,y=CC1-work-y3] {\loadedtable};

\addplot [mark=square, dashed]
table [x=CC1-work-x4,y=CC1-work-y4] {\loadedtable};

\ifx\excludeTriangle\undefined
\addplot [mark=triangle, dashed]
table [x=CC1-work-x5,y=CC1-work-y5] {\loadedtable};
\fi

\addref[text align=center, raise=2ex]{\(\varepsilon^{-5/2}\)} {20 * x^(-0.5)};

\addref[text align=left, raise=-3ex]{\(\hskip 0.2cm \varepsilon^{-2} \abs{\log\p{\varepsilon}}^{2}\)} {2 * abs(ln(x))^2};

\end{axis}

\end{tikzpicture}
   \label{fig:total-work-CC-1d}
\end{subfigure}

  \begin{subfigure}[t]{0.5\textwidth}\centering
    \phantomsubcaption%
\begin{tikzpicture}

\begin{axis}[
xmode=log,ymode=log,log basis x={10},log basis y={10},
xlabel={\(h_{\ell}\)},
xmin=8.05363715071347e-05, xmax=0.757858283255199,
ylabel={\(\textrm{Kurt}\sq{\Zest_{\ell}}\)},
ymin=10.5687022178315, ymax=1472.78023298378,
]
\addplot [mark=*]
table [x=gbm1-Wl-x0,y=CC1-kurt-y0] {\loadedtable};

\addplot [mark=square*]
table [x=gbm1-Wl-x0,y=CC1-kurt-y1] {\loadedtable};

\ifx\excludeTriangle\undefined
\addplot [mark=triangle*]
table [x=gbm1-Wl-x0,y=CC1-kurt-y2] {\loadedtable};
\fi

\addplot [dashed, mark=o]
table [x=gbm1-Wl-x0,y=CC1-kurt-y3] {\loadedtable};

\addplot [dashed, mark=square]
table [x=gbm1-Wl-x0,y=CC1-kurt-y4] {\loadedtable};

\ifx\excludeTriangle\undefined
\addplot [dashed, mark=triangle]
table [x=gbm1-Wl-x0,y=CC1-kurt-y5] {\loadedtable};
\fi

\addref[text align=center, raise=1ex]{\(h_{\ell}^{-1/2}\)}{ 20 * x^-0.5 };

\end{axis}

\end{tikzpicture}
     \label{fig:kurt-CC-1d}
  \end{subfigure}
  \caption{{ The Clark-Cameron example in \cref{eq:CC} for \(S = \br{x \in
        \rset^{2} : x_{2} \geq 1}\) when using Euler-Maruyama (\emph{solid}) or
      Milstein (\emph{dashed})
      \ifx\excludeTriangle\undefined%
        and different values of \(\eta\).
      \else
        in the traditional, \(\Delta P_{\ell}\), and branching, \(\Zest_{\ell}\), estimators.
      \fi
      \subref{fig:Vl-CC-1d} {shows numerical verification of the variance
        convergence of \(\Zest_{\ell}\)}. \subref{fig:total-work-CC-1d} shows the
      total work estimate of MLMC for different tolerances.
      \subref{fig:kurt-CC-1d} The kurtosis of \(\Zest_{\ell}\).}}
  \label{fig:CC-1d}
\end{figure}
 \clearpage
\section{Bounds on Solutions of Elliptic SDEs}\label{sec:sde-bounds}%
In this section, we prove \cref{ass:cond-density,ass:g-derv-bounds} for
solutions to SDEs with certain conditions on the SDE coefficients and the
boundary \(K\).
For any \(x \equiv \p{x_{i}}_{i=1}^{d} \in \rset^{d}\), define \(x_{-j} \equiv
\p{x_{i}}_{i=1, i\neq j}^{d} \in \rset^{d-1}\) and define \(\exp{x}\), \(\log{x}\)
and \(x^{-1}\) component-wise. For \(m \in \nset^{d}\), define \(\abs{m} = m_{1}
+ \ldots + m_{d}\) and \(\Diff{\xi}^{m} \equiv \frac{\partial^{m_{1}}}{\partial \xi_{1}^{m_{1}}}\ldots
\frac{\partial^{m_{d}}}{\partial \xi_d^{m_{d}}}\).
 Given a set \(J \subset
\rset^{d}\), define
\begin{equation}
  \label{eq:dist-set-def}
  J^{\delta} \equiv \br{y \in \rset^{d} \::\: \dist[J]{y} \le \delta},
\end{equation}
and for a function \(u : \rset^{d} \to \rset^{d}\), let \(u\p{J}\) denote the
image of \(J\) under the mapping \(x \to u\p{x}\), i.e., \( u\p{J} \equiv \br{u\p{x}
  \: : \: x \in J}%
\). %

We now define a class of ``Simple'' sets which are a particularly simple form
of Lipschitz boundaries.
\begin{definition}[\texorpdfstring{\niceset{}}{(Si)} sets]\label{def:nicesets}
  We say that a set \(J \subset \rset^{d}\) is an \niceset{} set if it is the graph
  of a Lipschitz function. In other words, there exists an index \(j\in\br{1, \ldots,
    d}\) and a Lipschitz function \(f:\rset^{d-1} \to \rset\) such that
  \[
    J = \br{x \in \rset^{d} \: : \: x_j = f\p*{x_{-j}}}.
  \]
\end{definition}

\begin{lemma}\label{thm:bounded-K-prob}
  Let \(\br{Z_{i}}_{i=1}^{d}\) be a set of i.i.d. Gaussian random variables
  with \(\var{Z_{i}} = \tau\) for all \(i\) and denote \(Z=\p{Z_{i}}_{i=1}^{d}\).
  If \(J \subset \rset^{d}\) is an \niceset{} set, then there exists a constant
  \(C>0\) such that
  \[
    \prob{\dist[J]{Z} \leq \delta} \leq C \times \frac{\delta}{\tau^{1/2}}.
  \]
\end{lemma}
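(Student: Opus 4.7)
The plan is to exploit the Lipschitz graph structure of $J$ to reduce the multivariate event $\{\dist[J]{Z}\le\delta\}$ to a one-dimensional event on a single Gaussian coordinate, whose probability is easy to bound via the $L^\infty$ bound on the Gaussian density.

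First I would fix the representation: by \cref{def:nicesets} there is an index $j$ and a Lipschitz function $f:\rset^{d-1}\to\rset$ with constant $L$, say, such that $J=\{x\in\rset^d : x_j=f(x_{-j})\}$. The key geometric observation is that if $\dist[J]{x}\le\delta$, then there exists $y_{-j}\in\rset^{d-1}$ with $\|(x_{-j},x_j)-(y_{-j},f(y_{-j}))\|\le\delta$; in particular $\|x_{-j}-y_{-j}\|\le\delta$ and $|x_j-f(y_{-j})|\le\delta$, so by the triangle inequality and the Lipschitz property
\[
|x_j-f(x_{-j})| \le |x_j-f(y_{-j})| + |f(y_{-j})-f(x_{-j})| \le \delta + L\,\|y_{-j}-x_{-j}\| \le (1{+}L)\delta.
\]
Therefore $\{\dist[J]{Z}\le\delta\}\subseteq\{|Z_j-f(Z_{-j})|\le(1{+}L)\delta\}$.

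Next I would condition on $Z_{-j}$. Since the components of $Z$ are independent and $Z_j\sim\mathcal{N}(0,\tau)$, the conditional density of $Z_j$ given $Z_{-j}$ is the $\mathcal{N}(0,\tau)$ density, which is bounded pointwise by $(2\pi\tau)^{-1/2}$. Hence, for any (deterministic or $Z_{-j}$-measurable) value $a$,
\[
\prob{|Z_j-a|\le(1{+}L)\delta \mid Z_{-j}} \le \frac{2(1{+}L)\delta}{\sqrt{2\pi\tau}}.
\]
Taking $a=f(Z_{-j})$ and then taking expectation over $Z_{-j}$ gives
\[
\prob{\dist[J]{Z}\le\delta} \le \frac{2(1{+}L)}{\sqrt{2\pi}}\cdot\frac{\delta}{\tau^{1/2}},
\]
so the claim holds with $C=2(1{+}L)/\sqrt{2\pi}$.

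No step is really an obstacle here — the only nontrivial piece is the geometric reduction via the Lipschitz constant, and the result is driven entirely by the uniform bound on the Gaussian density. The same argument will later be what lets \cref{ass:cond-density} be established for elliptic SDEs, since the conditional density of $X_{1}$ given $\mathcal{F}_{1-\tau}$ plays the role of the Gaussian density here.
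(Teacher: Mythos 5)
Your proof is correct and follows essentially the same route as the paper's: both reduce the event $\{\dist[J]{Z}\le\delta\}$ to the one-dimensional event $\{|Z_j - f(Z_{-j})|\le (L{+}1)\delta\}$ via the Lipschitz graph structure, then condition on $Z_{-j}$ and apply the uniform bound on the $\mathcal{N}(0,\tau)$ density, arriving at the same constant. No substantive differences to report.
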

\begin{proof}
  For the \niceset{} set \(J\) with corresponding index \(j\) and Lipschitz function \(f\) with
  Lipschitz constant \(L\), we first show that \(J^{\delta} \subseteq \widetilde{J^\delta} \)
  where %
  \[
    \widetilde{J^\delta} = \br{ x \in \rset^d : \abs{x_j - f\p{x_{-j}}} \leq \p{L{+}1} \delta
    }.
  \]
  Letting \(y \in J\) and \(x \in J^{\delta}\) such that \(y_{j} = f\p{y_{-j}}\)
  and \(\norm{x{-}y} \leq \delta\). It follows that
  \[
    \begin{aligned}
      \abs{f\p{ x_{-j}} - x_{j}}
      &\leq \abs{f\p{x_{-j}} - f\p{y_{-j}}} +
        \abs{x_{j}- y_{j}} \\
      &\leq L \norm{ x_{-j} - y_{-j}} + \abs{x_{j}- y_{j}}\\
      &\leq \p{L{+}1}\, \norm{x{-}y}\\
      &\leq \p{L{+}1} \, \delta.
    \end{aligned}
  \]
  Then%
  \[
      \prob{Z \in J^{\delta}} \leq \prob{Z \in {\widetilde{J^\delta}}}
      =
      \E*{\prob{\abs{Z_{j} - f\p{Z_{-j}}} \leq \p{L{+}1}\, \delta \given\, Z_{-j}}}.
  \]
  Using standard 1D results on \(Z_{j}\) yields
  \[
    \prob*{\abs{Z_{j} {-} f\p{Z_{-j}}} \leq \p{L{+}1}\, \delta \given Z_{-j}} \ \leq\
    \frac{2\p{L{+}1}}{\p{2\pi}^{1/2}} \times \frac{\delta}{\tau^{1/2}},
  \]
  and the result follows.
\end{proof}

The previous lemma can be used to show that \cref{ass:cond-density} is
satisfied for a Wiener process, i.e., \(X_{t}=W_{t}\), and a set \(S\) whose
boundary \(\partial S \equiv K\) is \niceset{}. We next prove a more general result
showing both \cref{ass:cond-density,ass:g-derv-bounds} for sets whose boundary
can be covered by \niceset{} sets and SDEs whose coefficients satisfy certain
smoothness conditions.

\begin{theorem}\label{thm:cond-density-sde}
  For \(S \subset \rset^{d}\), assume that \(\partial S \equiv K \subseteq \bigcup_{j=1}^{n} J_{j}\) for some
  finite \(n\) and \niceset{} sets \(\br{J_{j}}_{j=1}^{n}\). Assume that the
  SDE \eqref{eq:sde} is uniformly elliptic and that \(a, \sigma \sigma^{T}\) are
  \(\lambda\)-H\"older continuous in space uniformly with respect to time and let
  \(\p{X_{t}}_{t \in \sq{0,1}}\) satisfy the SDE. Then, there exist \(c_{1}>0\)
  such that for all \(0 < s < 1\) and all \(\delta > 0\) the following holds
  \begin{equation}
    \E*{\p[\big]{\prob{\dist{\X{1}} \leq \delta \given \mathcal{F}_{s}}}^{2}} \leq c_{1} \: \frac{\delta^{2}}{\p{1{-}s}^{1/2}}.
  \end{equation}
  Assume further that \(a, \sigma \sigma^{T} \in C_{b}^{2,0}\), then there exists \(c_{1},
  c_{2}>0\) such that for all \(\xi \in \rset^{d}\), \(0 < s < 1\) and \(m \in
  \nset^{d}, 0 \le \abs{m} \leq 2\),
  \begin{equation}\label{eq:derv-cond-sde-general}
    \abs{ \Diff{\xi}^{m} \prob*{X_{1} {\in} S \given X_{s}{=}\xi} } \leq \frac{c_{1}}{\p{1{-}s}^{\abs{m}/2}}.
    \exp\p*{-c_{2} \frac{\dist^2{\xi}}{\p{1{-}s}}}.
  \end{equation}
\end{theorem}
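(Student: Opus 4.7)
The proof rests on the classical Aronson-type Gaussian bound for the transition density $\Gamma(y, t; \xi, s)$ of the SDE~\cref{eq:sde}. Under uniform ellipticity with $\lambda$-H\"older continuous $a$ and $\sigma\sigma^{T}$, there exist constants $C, c > 0$ such that
\[
    \Gamma(y, t; \xi, s) \le \frac{C}{\p{t-s}^{d/2}} \exp\p*{-c\,\frac{\norm{y-\xi}^{2}}{t-s}},
\]
and under the stronger assumption $a, \sigma\sigma^{T} \in C_{b}^{2,0}$ analogous bounds hold for the spatial derivatives $\Diff{\xi}^{m}\Gamma$ with prefactor $(t-s)^{-(d+\abs{m})/2}$ and the same Gaussian exponential; these are standard parabolic PDE results (e.g.\ Friedman, Il'in--Kalashnikov--Oleinik, or the estimates invoked in \cite{gobet:density}). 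I would take these two families of density bounds as black boxes.

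For the first inequality, I apply the Markov property to write $\prob{\dist{\X{1}}\le\delta\given\mathcal{F}_{s}} = h\p{\X{s}}$ where $h\p{\xi}\defeq\prob{\dist{\X{1}}\le\delta\given\X{s}=\xi}$. Since $h\geq 0$,
\[
    \E*{h\p{\X{s}}^{2}} \le \p*{\sup_{\xi\in\rset^{d}} h\p{\xi}}\,\E{h\p{\X{s}}} = \p*{\sup_{\xi} h\p{\xi}}\,\prob{\dist{\X{1}}\le\delta}.
\]
Each factor reduces to a Gaussian computation via the upper Aronson bound: $h\p{\xi}$ is controlled by the probability that a Gaussian vector centred at $\xi$ with covariance of order $(1-s)\,I$ lies in $K^{\delta}$. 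Since $K\subseteq\bigcup_{j=1}^{n} J_{j}$ with $J_{j}$ \niceset{} and $n$ finite, a union bound reduces this to $n$ applications of \cref{thm:bounded-K-prob}, giving $\sup_{\xi}h\p{\xi}\lesssim \delta/\p{1-s}^{1/2}$. Taking $s=0$ and using that the marginal density of $\X{1}$ is likewise dominated by a Gaussian, the same argument yields $\prob{\dist{\X{1}}\le\delta}\lesssim\delta$. Multiplying the two estimates produces the claimed $\delta^{2}/\p{1-s}^{1/2}$ bound.

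For \cref{eq:derv-cond-sde-general} with $\abs{m}\geq 1$, writing $g_{s}\p{\xi}\defeq\prob{\X{1}\in S\given\X{s}=\xi} = \int_{S}\Gamma\p{y,1;\xi,s}\,dy$, I would differentiate under the integral and exploit the identity $\int_{\rset^{d}}\Diff{\xi}^{m}\Gamma\p{y,1;\xi,s}\,dy = 0$ to replace the integration domain $S$ by $S^{c}$ (choosing whichever side does not contain $\xi$). On the resulting domain, every $y$ satisfies $\norm{y-\xi}\ge\dist{\xi}$, so the Gaussian exponent splits as
\[
    \exp\p*{-c\frac{\norm{y-\xi}^{2}}{1-s}}\le\exp\p*{-\frac{c}{2}\frac{\dist^{2}{\xi}}{1-s}}\exp\p*{-\frac{c}{2}\frac{\norm{y-\xi}^{2}}{1-s}},
\]
which pulls the Gaussian-in-$\dist{\xi}$ factor outside, leaving a residual integral of order $(1-s)^{-\abs{m}/2}$ from the derivative Aronson bound. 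The $\abs{m}=0$ case is meaningful only when $\xi$ lies on the ``far'' side of $S$ (where the same complement argument bounds $g_{s}$ or $1-g_{s}$ by the Gaussian factor); otherwise one uses the trivial $g_{s}\le 1$.

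The principal obstacle is the invocation of the derivative Aronson bounds under the stated regularity: the bounds themselves are classical, but assembling clean references with the correct dependence on the parabolic distance requires care, and it is here that the condition $a,\sigma\sigma^{T}\in C_{b}^{2,0}$ is actually used. A secondary bookkeeping point is that the \niceset{} charts $J_{j}$ covering $K$ may overlap near corners of $K$, but since $n$ is finite, this only costs a constant factor in the union bound. Once the two Gaussian density estimates are available, the probabilistic content of the proof is simply the $\bigl(\sup h\bigr)\cdot\E{h}$ factorisation in the first part and the ``integrate on the complement'' manoeuvre in the second.
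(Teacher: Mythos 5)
Your proposal is correct and follows essentially the same route as the paper's own proof: the same Friedman/Aronson Gaussian bounds on \(\Gamma\) and its first two \(\xi\)-derivatives, the same \(\bigl(\sup_\xi h\bigr)\cdot\E{h(X_s)}\) factorisation combined with the union bound over the \niceset{} charts and \cref{thm:bounded-K-prob} for the first inequality, and the same pass-to-the-complement plus Gaussian-exponent-splitting argument for \cref{eq:derv-cond-sde-general}. You even flag the same \(|m|=0\), \(\xi\) deep inside \(S\) edge case that the paper's proof treats somewhat cavalierly, so no further changes are needed.
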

The assumption on \(S\) (or \(K\)) is illustrated in
\cref{fig:K-assumption-illustration}. \cref{thm:cond-density-sde} shows that
\cref{ass:cond-density,ass:g-derv-bounds} are satisfied for a solution to a
uniformly elliptic SDE assuming that the set \(K\) is covered by a finite
number of \niceset{} sets.
\begin{proof}
  We have the following bound on \(\Gamma\p{\cdot, 1 ; \xi, s}\), the density of
  \(X_{1}\) given \(X_{s}=\xi\) for \(m \in \nset^{d}\) and some \(C_{m}, c_{m}>0\),
  \begin{equation}\label{eq:density-bound}
    \abs{\Diff{\xi}^{m} \Gamma\p{x, 1 ; \xi, s}} \leq \frac{C_{m}}{\p{1{-}s}^{\p{d+\abs{m}}/2}}
    \exp\p*{- c_{m} \frac{\norm{x-\xi}^{2}}{1-s}},
  \end{equation}
  when the SDE \eqref{eq:sde} is uniformly elliptic and, for \(\abs{m}=0\),
  when \(a, \sigma \sigma^{T}\) are \(\lambda\)-H\"older continuous in space uniformly with
  respect to times \cite[Chapter 9, Theorem 2%
  ]{friedman:partial} and for \(0 \leq \abs{m} \leq 2\) when \(a, \sigma \sigma^{T} \in
  C_{b}^{2, 0}\) \cite[Chapter 9, Theorem 7%
  ]{friedman:partial}.
  Hence
  \[
    \begin{aligned}
      \prob{\dist{\X{1}} \leq \delta \given \mathcal{F}_{s}}
      &= \prob{\dist{\X{1}} \leq
        \delta
        \given X_{s}}\\
      &= \int_{\rset^{d}} \I{\dist{x} \leq \delta} \, \Gamma\p{x, 1; X_{s}, s} \D x\\
      &\leq \int_{\rset^{d}} \I{\dist{x} \leq \delta} \, \p*{\frac{C_{0}}{\p{1{-}s}^{d/2}} \exp\p*{-c_{0} \frac{\norm{x-X_{s}}^{2}}{1-s}}} \D x \\
      &= C_{0} \; \p{2 \pi/c_{0}}^{d/2} \; \prob{\dist{Z} \leq \delta \given X_{s}},
    \end{aligned}
  \]
  where \(Z\) is a multivariate Normal random variable with mean \(X_{s}\)
  with variance \(\p{\p{1{-}s}/c_{0}} I_{d}\) where \(I_{d}\) is the \(d \times d\)
  identity matrix. Then noting
  \[
    \prob{\dist{Z} \leq \delta \given X_{s}} \leq \sum_{j=1}^{n} \prob{\dist[J_j]{Z} \leq \delta
      \given X_{s}},
  \]
  and using \cref{thm:bounded-K-prob} we can conclude that there is a constant
  \(\widetilde C\)
  \[
    \prob{\dist{\X{1}} \leq \delta \given \mathcal{F}_{s}} \leq n\, \widetilde C\,
    \frac{\delta}{\p{1{-}s}^{1/2}}.
  \]
  Hence
  \[
    \begin{aligned}
      \E{\p*{\prob{\dist{\X{1}} \leq \delta \given \mathcal{F}_{s}}}^{2}}
      &\leq
        n\, \widetilde C \frac{\delta}{\p{1{-}s}^{1/2}} \, \E{\prob{\dist{\X{1}}
        \leq \delta \given \mathcal{F}_{s}}} \\
      &\leq n^{2}\, \widetilde C^{2} \frac{\delta^{2}}{\p{1{-}s}^{1/2}}.
    \end{aligned}
  \]
  To prove \cref{eq:derv-cond-sde-general}, we distinguish between two cases
  \begin{enumerate}
  \item \(\xi \notin S\), then
    \[
      \begin{aligned}
        &\abs{\Diff{\xi}^{m} \prob*{X_{1} \in S \given X_{s} = \xi} }\\
        &\leq \int_{S} \abs{\eval{\Diff{\xi}^{m}\Gamma\p{x, 1 ; \xi, s}}} \D x\\
        &\leq \int_{S} \frac{C_{m}}{\p{1{-}s}^{\p{d+\abs{m}}/2}} \times \exp\p*{- c_{m}
          \frac{\norm{x-\xi}^{2}}{1{-}s}} \D x \\
        &\leq
          \frac{1}{\p{1{-}s}^{\abs{m}/2}}
          \exp\p*{- c_{m} \frac{\inf_{x \in S} \norm{x-\xi}^{2}}{2 \p{1{-}s}}} \int_{S}
          \frac{C_{m}}{\p{1{-}s}^{d/2}} \times \exp\p*{- c_{m}
          \frac{\norm{x-\xi}^{2}}{2 \p{1{-}s}}} \D x \\%
        &\leq
          \frac{1}{\p{1-s}^{\abs{m}/2}}
          \exp\p*{- c_{m} \frac{\inf_{x \in S} \norm{x-\xi}^{2}}{2 \p{1{-}s}}}
          \p{\p{4 \pi}^{d/2}C_{m}},
      \end{aligned}
    \]
    and we conclude with
    \[
      \inf_{x \in S} \norm{x-\xi}^{2} =
      \inf_{x \in \partial S} \norm{x-\xi}^{2} = \dist^2{\xi},
    \]
    since \(\xi \notin S\).
  \item \(\xi \in S\), then, for \(S^{c}\) being the compliment of \(S\), we have
    \[
      \begin{aligned}
        \abs{\Diff{\xi}^{m} \prob*{X_{1} {\in} S \given X_{s}{=}\xi}} \detailed{&=
                                                                  \abs{\Diff{\xi}^{m} \p{1 - \prob*{X_{1} {\in} S^{c} \given X_{s}{=}\xi}}}\\%
        } &=
            \abs{\Diff{\xi}^{m} \p{\prob*{X_{1} {\in} S^{c} \given X_{s}{=}\xi}}},
      \end{aligned}
    \]
    and we can use the first step since \(\xi \notin S^{c}\) and \(\partial S^{c} = \partial S\).
  \end{enumerate}
\end{proof}

\begin{figure}
  \centering
  \begin{subfigure}[t]{0.5\textwidth}\centering
    \subfigtag{{top-left}}\phantomsubcaption
\begin{tikzpicture}
  \begin{axis}[xmin=-1.1,xmax=1.1,ymin=-1.1,ymax=1.1, axis x line=center,
    axis y line=center, axis equal image,xticklabels={},yticklabels={},
    legend style={draw=none,fill=none,at={(0,1)},anchor=north east}]

    \draw [fill=lightgray, opacity=0.5] (0,0) circle (1);

    \draw[dashed, very thick] (-1,0 |- 135:1) -- (135:1) arc (135:45:1) --
    (45:1 -| 1,0); \addlegendentry{\(f_{1}\p{x}\)};%
    \addlegendimage{dashed, very thick};

    \draw[dotted, very thick] (0,1 -| 135:1) -- (135:1) arc (135:225:1) --
    (225:1 |- 0,-1); \addlegendentry{\(f_{2}\p{y}\)};%
    \addlegendimage{dotted, very thick};%
  \end{axis}
\end{tikzpicture}
     \label{fig:valid-K}
  \end{subfigure}\hfill
  \begin{subfigure}[t]{0.5\textwidth}\centering
    \subfigtag{{top-left}}\phantomsubcaption
    \pgfplotsset{width=5.1cm}
\begin{tikzpicture}
  \begin{polaraxis}[ticks=none, xticklabel={ \pgfmathparse{\tick/180}
      \pgfmathifisint{\pgfmathresult}{$\pgfmathprintnumber[int
        detect]{\pgfmathresult}\pi$}%
      {$\pgfmathprintnumber[frac,frac denom=6,frac
        whole=false]{\pgfmathresult}\pi$} }]
    \begin{scope}[domain=0:(36*pi), samples=600]
      \addplot [name path=B, data cs = polarrad] { (1+x/pi)^(-0.9)};%
      \addplot [name path=A, data cs = polarrad] { (2+x/pi)^(-0.9)};
    \end{scope}
    \tikzfillbetween[of=A and B,on layer=]{lightgray,opacity=0.5};
  \end{polaraxis}
\end{tikzpicture}
     \pgfplotsset{width=6.1cm}%
    \label{fig:invalid-K}
  \end{subfigure}

  \caption{\subref{fig:valid-K} The set \(S = \br{\p{x,y} \in \rset^{2} \: : \:
      x^{2}+y^{2} \leq 1}\) whose boundary \(\partial S \equiv K\) satisfies the assumptions
    of \cref{thm:cond-density-sde}. We split the circle on the boundary into
    four parts and we show two of them here. %
    \subref{fig:invalid-K} The set \(S = \br{\p{r,\theta} \in
      \rset_{+}^{2} : \p{2+\theta/\pi}^{-0.9}\leq r \leq \p{1+\theta/\pi}^{-0.9}}\), in polar
    coordinates, whose boundary does not satisfy the assumptions of
    \cref{thm:cond-density-sde}.}
  \label{fig:K-assumption-illustration}
\end{figure}

As an example of a set that does not satisfy the assumptions of
\cref{thm:cond-density-sde}, consider \( K = \br{1/n^{b} : n \in \nset} \) for
\(0 < b \leq 1\). Then we can show that \cref{ass:cond-density} is not satisfied
for a standard Normal random variable \(Z\) and any \(\delta \leq b\),
\[
  \begin{aligned}
    \prob*{\min_{n \in \nset} \abs*{Z - {n^{-b}}} \leq \delta} &= 2 \int_{0}^{\infty} \I{\min_{n
                                                       \in \nset} \abs{y - {n^{-b}}} \leq \delta} \
                                                       \phi\p*{y} \D y\\
    \detailed{&\geq \frac{2 \exp\p{-1/2}}{\p{2 \pi}^{1/2}} \int_{0}^{1} \I{\min_{n \in
                \nset} \abs{y - {n^{-b}}} \leq \delta} \D y\\%
    } &\geq \frac{2
        \exp\p{-1/2}}{\p{2 \pi}^{1/2}} \int_{0}^{\p{\delta/b}^{{b}/\p{b+1}}} \I{\min_{n \in
        \nset} \abs{y
        - {n^{-b}}} \leq \delta} \D y\\
                                                     &= \frac{2 \exp\p{-1/2}}{\p{2 \pi}^{1/2}} \, \p{\delta/b}^{{b}/\p{b+1}},
  \end{aligned}
\]
where \(\phi\p{\cdot}\) is the density of standard normal random variable.
\detailed{To justify the last inequality first note that for some \(n\), the
  distance between two points on \(K\) is \(n^{-b} - \p{n+1}^{-b}\). When
  \(\delta\) is larger than that distance, and since subsequent terms have smaller
  distances, the indicator from \(0\) to \(n^{-b}\) is 1. Hence we need to
  find the largest \(n^{-b}\) (or smallest \(n\)) such that
  \[
    n^{-b} - \p{n+1}^{-b} \leq \delta
  \]
  Recall
  \[
    \p*{{n^{-b}} - {\p{n+1}^{-b}} }^{\frac{b}{b+1}} \leq
    {b^{\frac{b}{b+1}}} \ {n^{-b}}
  \]
  To show this, simplify the inequality to
  \[
    n \p*{1 - {\p{1 + 1/n}^{-b}}} \leq b
  \]
  then letting \(x=1/n \in [0,1]\), the function \( x^{-1} \p{1 - \p{1 +
      x}^{-b}} \) is decreasing over \(x \in [0,1]\). For \(x=0\) the limit is
  \(b\). Hence we pick the smallest \(n\) for which
  \[
    n^{-b} \leq \p{\delta/b}^{\frac{b}{b+1}}
  \]
}
Similarly, consider the two dimensional set in polar
coordinates %
\(K = \br{\p{r,\theta} \in \rset_{+} {\times} \sq{0, 2\pi} \: : \: r {=}
  \p*{n{+}{\theta/\pi}}^{-b}, n \in \nset}\) (see \cref{fig:invalid-K}) for some \(b \in
\p{0,1}\). Using a similar calculation to before we can show that for a 2D
standard normal random variable, \(Z\), any \(\delta<b\)
\[
  \prob*{\min_{y \in K} \norm*{Z - y} \leq \delta} \geq \frac{1}{2}\exp\p{-1/2} \,
  \p{\delta/b}^{2b/\p{b+1}}.
\]
\detailed{%
  \[
    \begin{aligned}
      \prob*{\min_{y \in K} \norm*{Z - y} \leq \delta} &= \int_{0}^{2\pi} \int_{0}^{\infty} \I{\square \leq
        \delta}
      \phi\p{r}\, r \,\D r \D \theta\\
      &\geq \frac{\exp\p{-1/2}}{2 \pi} \int_{0}^{2\pi} \int_{0}^{1} \I{\square \leq \delta} \, r \,\D r
      \D \theta
    \end{aligned}
  \]
  For a fixed \(\theta\), the distance between two points is \(\p{\theta/\pi + n}^{-b} -
  \p{\theta/\pi + n+1}^{-b}\). When \(\delta\) is larger than that distance for some
  \(n\), and since subsequent terms have smaller distances, the indicator from
  \(r=0\) to \(r=\p{\theta/\pi + n}^{-b}\) for that
  \(n\) %
  is 1. Hence we need to find the largest \(\p{n+\theta/\pi}^{-b}\) (or smallest
  \(n\)) such that
  \[
    \p{\theta/\pi + n}^{-b} - \p{\theta/\pi + n+1}^{-b} \leq \delta
  \]
  Like before, we impose (for \(\theta \in \sq{0,2 \pi}\))
  \[
    {\p{\theta/\pi + n}^{-b}} \leq \p{\delta/b}^{\frac{b}{b+1}}
  \]
}

The Geometric Brownian Motion (GBM) does not satisfy the conditions of
\cref{thm:cond-density-sde} since the diffusion coefficient of the SDE of a
GBM is not bounded nor uniformly elliptic. To deal with this important case,
we first prove a similar result to \cref{thm:bounded-K-prob} for log-normal
random variables.

\begin{lemma}\label{thm:bounded-prob-lognormal}
  Let \(\br{Z_{i}, Y_{i}}_{i=1}^{d}\) be two sets of independent Gaussian
  random variables with \(\var{Z_{i}} = \tau\) for all \(i\) and denote \(Z=
  \p{Z_{i}}_{i=1}^{d}, Y= \p{Y_{i}}_{i=1}^{d}\). Let \(J \subset \rset^{d}\) be an
  \niceset{} set. There exists \(\delta_0{>}0\) and \(C{>}0\) such that for all
  \(0{<}\delta {<}\delta_0\)
  \[
    \E{\, \prob{ \dist[\exp\!{J}]{\exp \p{Z + Y}} < \delta \ \given \ Y}^2 } \leq C \,
    \frac{\delta^2}{\tau^{1/2}}.
  \]
\end{lemma}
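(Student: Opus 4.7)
The plan is to obtain a pointwise upper bound on $P(Y) := \prob{\dist[\exp J]{\exp(Z+Y)} < \delta \mid Y}$ of the form $P(Y) \leq C\delta\,\tau^{-1/2} H(Y)\exp(-\dist[J]{Y}^2/(2\tau))$ for some function $H$ with finite Gaussian second moment, and then use the concentration of $\dist[J]{Y}$ under the Gaussian law of $Y$ to absorb one factor of $\tau^{-1/2}$ when taking the expectation of the square: the convolution of the Gaussian weight $e^{-\dist[J]{Y}^2/\tau}$ against the Gaussian density of $Y$ produces an additional factor of $\sqrt{\tau}$, converting the pointwise $\delta^2/\tau$ into the claimed $\delta^2/\sqrt{\tau}$.

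For the pointwise bound, let $W = Z + Y$, write $J = \{x : x_j = f(x_{-j})\}$ with $L$-Lipschitz $f$, and work on the event $\min_i W_i \geq \log(2\delta)$. The componentwise inequalities $|e^{W_i} - e^{x_i}| < \delta$, the mean value theorem $|e^a - e^b| \geq e^{\min(a,b)} |a-b|$, and the Lipschitz property of $f$ combine to give
\begin{equation*}
  |W_j - f(W_{-j})| \leq C\delta\Bigl(e^{-f(W_{-j})} + L\sqrt{\sum_{i\neq j} e^{-2W_i}}\Bigr).
\end{equation*}
Conditioning on $(Y, Z_{-j})$ this constrains $Z_j = W_j - Y_j$ to an interval of length $\ell(Z_{-j}, Y)$ (twice the right-hand side) centered at $c(Z_{-j}, Y) := f(W_{-j}) - Y_j$; the one-dimensional density of $Z_j \sim N(0, \tau)$ then yields $\prob{\cdot \mid Z_{-j}, Y} \leq \ell/\sqrt{2\pi\tau}\cdot \exp(-c^2/(2\tau))$. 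Integrating over $Z_{-j}$ and changing variable to $x = Z + Y$ on $J$, the two exponential factors combine into the $d$-dimensional Gaussian density $\phi_\tau(x - Y)$ restricted to $J$, yielding
\begin{equation*}
  P(Y) \leq C\delta \int_J h(x)\, \phi_\tau(x - Y) \, dx_{-j}, \qquad h(x) := e^{-x_j} + L\sqrt{\sum_{i\neq j} e^{-2 x_i}}.
\end{equation*}
Since $\phi_\tau$ restricted to the Lipschitz graph $J$ has $(d{-}1)$-marginal density along the normal equal to $(2\pi\tau)^{-1/2} \exp(-\dist[J]{Y}^2/(2\tau))$ up to the bounded Lipschitz factor $\sqrt{1+L^2}$, and $h$ varies only on unit scale while $\phi_\tau$ concentrates on scale $\sqrt\tau$, the integral is at most $C\tau^{-1/2} H(Y) \exp(-\dist[J]{Y}^2/(2\tau))$, where $H(Y)$ is essentially $h$ evaluated at the projection of $Y$ onto $J$.

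Squaring and taking expectation gives $\E{P(Y)^2} \leq (C^2\delta^2/\tau)\,\E{H(Y)^2 \exp(-\dist[J]{Y}^2/\tau)}$. Because $Y$ is Gaussian with variance bounded away from zero (in the GBM application of \cref{thm:cond-density-sde-gbm} one has $\var{Y_i} = 1-\tau$), the quantity $\dist[J]{Y}$ is a $1$-Lipschitz function of $Y$ whose distribution has variance $\sigma^2 > 0$; the Gaussian convolution identity $\int_{\rset} e^{-t^2/\tau}(2\pi\sigma^2)^{-1/2}e^{-t^2/(2\sigma^2)}\,dt = (1+2\sigma^2/\tau)^{-1/2}$ then produces the required extra factor of order $\sqrt{\tau}$. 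The remaining $\E{H(Y)^2}$ is finite since $H(Y)$ grows at most exponentially in $\|Y\|$ and Gaussian $Y$ possesses all exponential moments, so we conclude $\E{P(Y)^2} \leq C\delta^2/\sqrt{\tau}$.

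The main obstacle is the excluded region $\{\min_i W_i < \log(2\delta)\}$, where the mean value theorem reduction in the first step fails: $|e^{W_i} - e^{x_i}| < \delta$ no longer forces $|W_i - x_i|$ to be small when both exponentials are tiny, so the Lipschitz propagation collapses. This residual is bounded via a union bound by $\sum_i \prob{Z_i + Y_i < \log(2\delta) \mid Y}$; standard Gaussian tail estimates (using the finiteness of $\E e^{-sY_i}$ to control the expected sub-Gaussian decay) show that for $\delta < \delta_0$ this contribution to $\E{P^2}$ is dominated by $O(\delta^2/\sqrt{\tau})$, and it is precisely to ensure that this region is a controlled perturbation that the threshold $\delta_0$ appears in the statement.
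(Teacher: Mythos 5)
Your overall strategy is the same as the paper's: pass to log-coordinates via the mean-value bound \(\abs{e^a-e^b}\geq e^{\min(a,b)}\abs{a-b}\), truncate the event on which some component of \(W=Z+Y\) is too negative (the paper excludes \(\norm{Z+Y}\geq\abs{\log\delta}^{3/4}\), you exclude \(\min_i W_i<\log(2\delta)\); both residuals are \(o(\delta^2)\) by Gaussian tails), reduce the remaining event to a slab of width \(\simeq\delta\,e^{c L\norm{W_{-j}}}\) around the Lipschitz graph, and extract \(\delta^2/\tau^{1/2}\) rather than \(\delta^2/\tau\) from a one-dimensional Gaussian computation in the \(j\)-th coordinate. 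Where you diverge is in the organisation of that last step: the paper applies conditional Jensen and conditions on \((Y_{-j},Z_{-j})\), so the exponential weight \(\exp\p{2(L{+}1)\norm{U_{-j}}}\) factors out of a clean one-dimensional integral over \(Y_j\); you instead integrate out \(Z\) first, producing a pointwise-in-\(Y\) bound carrying a weight \(H(Y)\) and a concentration factor \(\exp\p{-\dist[J]{Y}^2/(2\tau)}\).

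That final step is where there is a genuine gap as written. You bound \(\E{P(Y)^2}\) by \((C^2\delta^2/\tau)\,\E{H(Y)^2\exp\p{-\dist[J]{Y}^2/\tau}}\) and then treat the expectation of the product as a product of expectations, asserting that the exponential contributes \(O(\tau^{1/2})\) while ``the remaining \(\E{H(Y)^2}\) is finite.'' The two factors both depend on \(Y\) and are not independent; the obvious repair by Cauchy--Schwarz only gives \(\E{\exp\p{-2\dist[J]{Y}^2/\tau}}^{1/2}=O(\tau^{1/4})\), which yields \(\delta^2\tau^{-3/4}\) and misses the claimed rate. Moreover, the \(O(\tau^{1/2})\) estimate for \(\E{\exp\p{-\dist[J]{Y}^2/\tau}}\) does not follow from \(\dist[J]{Y}\) being a \(1\)-Lipschitz function with positive variance — your convolution identity treats it as a centered Gaussian, which it is not; what is actually needed is the anti-concentration bound \(\prob{\dist[J]{Y}\leq s}\lesssim s\), i.e.\ \cref{thm:bounded-K-prob} again, which relies on the graph structure and the non-degeneracy of \(Y_j\). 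Both defects are repaired by the paper's device: use the slab constraint itself to bound \(\abs{x_j}\leq L+1+\abs{f(0)}+L\norm{x_{-j}}\), so that the weight can be taken to depend only on the \((-j)\)-coordinates, and then obtain the \(\tau^{1/2}\) gain by integrating over \(Y_j\) conditionally on \((Y_{-j},Z_{-j})\), using \(\dist[J]{y}\geq\abs{y_j-f(y_{-j})}/\sqrt{1+L^2}\) and the boundedness of the density of \(Y_j\). A related loose end, resolved by the same device, is that your slab half-width is derived from an inequality whose natural form involves \(e^{-W_j}\), i.e.\ the constrained coordinate itself, so some bootstrapping is needed before you may replace it by a function of \(W_{-j}\) alone.
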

\begin{proof}
  If \(v \equiv \exp x \in \p{\exp\!{J}}^\delta\) then there exists
  \(w \equiv \exp y \in \exp\!J\) such that
  \(\norm{ v {-} w } \leq \delta\). If, in addition, \(\norm{x} < R_\delta \equiv \abs{\log
    \delta}^{3/4}\) then for sufficiently small \(\delta_0\), \(2\delta {<}v_{i} {<}
  (2\delta)^{-1}\) for all \(\delta {<}\delta_0\) and all \(i = 1,\ldots, d\), and hence
  \(\norm{w^{-1}} < 2\, \norm{v^{-1} }\),
from which it follows that
\[
  \norm{ x {-} y } = \norm{ \log v {-} \log w }
  \leq 2 \, \norm{v^{-1}}\, \delta.
\]

  \detailed{[To explain that last line, for each component index \(i\), we have
    \(\abs{x_i} \leq \abs{\log \delta}^{3/4}\) hence \(-\abs{\log \delta}^{3/4} \leq x_{i} \le
    \abs{\log \delta}^{3/4}\) hence \(\exp\p{-\abs{\log \delta}^{3/4}} \leq v_{i} \le
    \exp\p{\abs{\log \delta}^{3/4}}\) or \(\delta^{1/\abs{\log \delta}^{1/4}} \leq v_{i} \le
    \delta^{-1/\abs{\log \delta}^{1/4}}\). Finally, we argue that \(\delta^{1/\abs{\log
        \delta}^{1/4}} \geq 2\delta\) for sufficiently small \(\delta < \delta_{0}\). Then \(2\delta \leq
    v_{i} \leq \p{2\delta}^{-1}\). Additionally by the reverse triangle inequality
    \(w_i \geq v_i - \abs{w_{i}-v_{i}} \geq v_i - \delta \geq v_{i}/2\).]}

If \(J\) has corresponding index \(j\) and Lipschitz function \(f\) with Lipschitz constant \(L\),
then
\begin{equation}
  \abs{ x_{j} - f\p{x_{-j}} }
  \ \leq\ \abs{ f\p{x_{-j}} - f\p{y_{-j}} } + \abs{x_{j} - y_{j}}
  \ \leq\ \p{L{+}1}\, \norm{ x {-} y }
  \ \leq\ 2\, \p{L{+}1} \norm{v^{-1}}\, \delta.
 \label{eq:mike}
\end{equation}
Since \(\norm{x} < R_\delta\), \(\exp\p{\norm{x}} \, \delta \leq \half\), and
so \(\abs{ x_{j} - f\p{x_{-j}} } \leq L{+}1\), and therefore
\[
  \abs{x_{j}} \ \leq\ L+1 + \abs{f\p{x_{-j}}} \ \leq\ L+1 + \abs{f\p{0}} + L \norm{x_{-j}}.
\]
Hence, since \(\norm{x} \leq \norm{x_{-j}} + \abs{x_{j}}\) and \( \norm{v^{-1}} \leq \exp\p{\norm{x}} \),
\begin{eqnarray*}
  \abs{ x_{j} - f\p{x_{-j}} }
  &\leq&  2\, \p{L{+}1}\, \exp (\norm{x_{-j}} + \abs{x_{j}})\, \delta \\
  &\leq&  2\, \p{L{+}1}\, \exp\p{L{+}1 {+} \abs{f\p{0}}}\, \exp(\p{L{+}1}\norm{x_{-j}})\ \delta.
\end{eqnarray*}
The conclusion is that if \(\exp{x} \in \p{\exp J}^\delta\) and \(\norm{x} < R_\delta\)
then \(x \in \widetilde{J^\delta}\) where \(\widetilde{J^\delta}\) is defined as
\[
  \widetilde{J^\delta} = \br{ x \in \rset^{d} : \abs{ x_{j} - f\p{x_{-j}} } \leq c
    \exp(\p{L{+}1}\norm{x_{-j}})\ \delta },
\]
with \(c\equiv 2\, \p{L{+}1}\, \exp\p{L{+}1 + \abs{f\p{0}}}\).

\vspace{0.1in}

Letting \(U \equiv Z+Y\), to bound \(
\E{\, \p{\E{\I{\exp\!U \in \p{\exp\!J}^\delta} \,\given\, Y}}^2 } \), we start by
noting that\\ \(\displaystyle \I{\exp\!U \in \p{\exp\!J}^\delta} \leq \I{\exp\!U \in
  \p{\exp\!J}^\delta} \I{\norm{U}< R_\delta} + \I{\norm{U}\geq R_\delta},\) and also that
\(\E{\I{\norm{U}\geq R_\delta} \,\given\, Y }<1\). Hence,
\begin{eqnarray*}
  \E{\, \E{\I{\exp\!U \in \p{\exp\!J}^\delta} \,\given\, Y }^2}
  &\leq& 2\, \E{\, \E{\I{\exp\!U \in \p{\exp\!J}^\delta} \I{\norm{U}<R_\delta} \,\given\, Y }^2} \\
  &&  +\ 2\, \E{\, \E{\I{\norm{U}\geq R_\delta} \,\given\, Y }^2}\\&\leq
  & 2\, \E{\,\E{\I{ U \in \widetilde{J^\delta}} \,\given\, Y }^2}
    + 2\, \prob{\norm{U} {\geq} R_\delta}.
\end{eqnarray*}
Then, by normality of \(U\) and
\[
  \begin{aligned}
    \E{\,\E{\I{U \in \widetilde{J^\delta}} \,\given\, Y }^2}
    \detailed{&=
                \E*{\,\p*{\E{\,\E{\I{U \in \widetilde{J^\delta}} \,\given\, Y, Z_{-j}}
                \given \, Y}}^2}\\
              &\leq\E*{\,{\E*{\,\p*{\E{\I{U \in \widetilde{J^\delta}} \,\given\, Y, Z_{-j}}}^{2}
                \given \, Y}}}\\
              &\leq\E*{\,{\E*{\,\p*{\E{\I{U \in \widetilde{J^\delta}} \,\given\, Y, Z_{-j}}}^{2}
                \given \, Y_{-j}, Z_{-j}}}}\\
    }
              &\leq \E{\,\E{\E{\I{U \in \widetilde{J^\delta}} \,\given\, Y, Z_{-j} }^2
                \given Y_{-j}, Z_{-j} }}.
  \end{aligned}
\]
Using standard 1D results on \(Z_{j}\),
\[
  \E*{\E*{\I{U \in \widetilde{J^\delta}} \,\given\, Y, Z_{-j} }^2 \given Y_{-j},
    Z_{-j} } \leq c^{2} \, \exp\p*{2 \p{L{+}1}\norm{U_{-j}}}\
  \frac{\delta^{2}}{\tau^{1/2}},
\]
and we can conclude
\[
  \E*{\,\E{\I{U \in \widetilde{J^\delta}} \,\given\, Y }^2} \leq c^{2} \,
  \E*{\exp\p{2\p*{L{+}1}\norm{U_{-j}}}}\ \frac{\delta^{2}}{\tau^{1/2}},
\]
where \(U_{-j}\) is a \(\p{d{-}1}\)-dimensional Normal random variable for
which \(\E{\exp\p{2\p{L{+}1}\norm{U_{-j}}}}\) is finite. The final result is
obtained by noting that \(\prob{\norm{U} \geq R_\delta}=\order{\delta^2}\) due to the
definition of \(R_\delta\) and standard asymptotic results for a \(d\)-dimensional
Normal random variable.
\end{proof}

The previous lemma can be used to show that \cref{ass:cond-density} is
satisfied for a process \(X_{t} = \exp\p{W_{t}}\) where \(\p*{W_{t}}_{t \geq 0}\)
is a Wiener process and a set \(S\) whose boundary is \(K\) and the set
\(\log\p{K}\) is \niceset{}. We next prove a more general result showing
\cref{ass:cond-density} for sets whose boundary can be covered by exponentials
of \niceset{} sets and processes that can be written as exponentials of
solutions of uniformly elliptic SDEs.

\begin{theorem}\label{thm:cond-density-sde-gbm}
  For \(S \subset \rset^{d}\) assume that \(\partial S \equiv K \subseteq \bigcup_{j=1}^{n} \exp {J_{j}}\) for
  some finite \(n\) and \niceset{} sets \(\br{J_{j}}_{j=1}^{n}\). Assume that
  the SDE \cref{eq:sde} is uniformly elliptic and \(a,\sigma\sigma^{T}\) are
  \(\lambda\)-H\"older continuous in space uniformly with respect to time and let
  \(\br{Y_{t}}_{t \in \sq{0,1}}\) satisfy the SDE. Then, there exists \(C>0\)
  such that for all \(0 < s < 1\) the following holds
  \begin{equation}
    \E*{\p[\big]{\prob{\dist{\exp{Y_{1}}} \leq \delta \given \mathcal{F}_{s}}}^{2}} \leq C \: \frac{\delta^{2}}{\p{1-s}^{1/2}}.
  \end{equation}
\end{theorem}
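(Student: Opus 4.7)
The plan is to follow the structure of the proof of \cref{thm:cond-density-sde}, replacing the role of \cref{thm:bounded-K-prob} with \cref{thm:bounded-prob-lognormal}. The uniform ellipticity and H\"older continuity of the coefficients are used only to invoke the Gaussian-type transition density bound \cref{eq:density-bound} with $\abs{m}=0$ for $\Gamma(x,1;\xi,s)$, the conditional density of $Y_{1}$ given $Y_{s}=\xi$.

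First I would use the Markov property to rewrite $\prob{\dist{\exp{Y_{1}}} \leq \delta \given \mathcal{F}_{s}} = \prob{\dist{\exp{Y_{1}}} \leq \delta \given Y_{s}}$, and then combine with \cref{eq:density-bound} for $\abs{m}=0$ to obtain
\[
  \prob{\dist{\exp{Y_{1}}} \leq \delta \given Y_{s} = \xi} \leq C_{0} \p{2\pi/c_{0}}^{d/2} \prob{\exp\p{\xi + Z} \in K^{\delta}},
\]
where $Z \sim \mathcal{N}(0, ((1{-}s)/c_{0}) I_{d})$ is defined on an enlarged probability space and is independent of $\mathcal{F}_{s}$. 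Next, using the covering $K \subseteq \bigcup_{j=1}^{n} \exp J_{j}$, the union bound, and the Cauchy--Schwarz inequality reduces the squared conditional expectation to
\[
  \sum_{j=1}^{n} \E*{\prob{\dist[\exp J_{j}]{\exp\p{Y_{s} + Z}} \leq \delta \given Y_{s}}^{2}}.
\]
Each summand has exactly the form treated by \cref{thm:bounded-prob-lognormal}, applied with Gaussian $Z$ of variance $\tau = (1{-}s)/c_{0}$ and conditioning variable $Y_{s}$; the factor $\tau^{-1/2}$ from that lemma then translates directly into the desired $(1{-}s)^{-1/2}$.

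The main obstacle is that \cref{thm:bounded-prob-lognormal} is stated with a Gaussian conditioning variable $Y$, whereas here the conditioning variable is $Y_{s}$, the value of an elliptic SDE at time $s$. However, a careful reading of its proof shows that the Gaussianity of $Y$ enters only through (i) the independence of $Z$ from $Y$, and (ii) the finiteness of the exponential moment $\E{\exp\p{2\p{L{+}1} \norm{Z_{-j} + Y_{-j}}}}$ for $U=Z+Y$. Property (i) is guaranteed by our choice of $Z$ on the enlarged probability space, while property (ii) reduces to a sub-Gaussian tail estimate for $Y_{s}$, which follows from standard moment bounds for SDEs under the stated coefficient assumptions. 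With these essentially routine modifications, the argument of \cref{thm:bounded-prob-lognormal} applies verbatim, completing the proof.
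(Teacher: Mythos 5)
Your overall skeleton matches the paper's: apply the Aronson-type bound \cref{eq:density-bound} with \(\abs{m}=0\) on \([s,1]\) to replace the conditional law of \(Y_{1}\) given \(Y_{s}\) by a Gaussian \(Z\) with variance proportional to \(1-s\), take a union bound over the covering sets \(\exp J_{j}\), and invoke \cref{thm:bounded-prob-lognormal}. The divergence --- and the gap --- lies in how you bridge from the SDE marginal \(Y_{s}\) to the Gaussian conditioning variable \(Y\) that the lemma requires.

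Your claim that the Gaussianity of \(Y\) enters the proof of \cref{thm:bounded-prob-lognormal} only through (i) independence from \(Z\) and (ii) finiteness of an exponential moment of \(\norm{U_{-j}}\) is not accurate. It is also used, essentially, in the ``standard 1D results'' step. There one bounds
\[
  \E*{\E*{\I{U \in \widetilde{J^\delta}} \given Y, Z_{-j}}^{2} \given Y_{-j}, Z_{-j}}
\]
by writing the square as a product of two conditional probabilities, paying the \(Z_{j}\)-density factor \(\tau^{-1/2}\) only once, and extracting the \emph{second} factor of \(\delta\) from the integration over \(Y_{j}\): because \(Y_{j}+Z_{j}\) is Gaussian with variance \(\var{Y_{j}}+\tau\) bounded away from zero (in the application this sum is exactly \(1/c_{0}\), uniformly in \(s\)), the probability that it lands in an interval of length \(O(\delta)\) is \(O(\delta)\) with a constant independent of \(\tau\). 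If you substitute \(Y_{s}\) for \(Y\) verbatim, this step needs the conditional law of \(Y_{s,j}\) given \(Y_{s,-j}\), convolved with \(N(0,\tau)\), to have a uniformly bounded density --- which does not follow from independence of \(Z\) plus tail estimates. Without it you only obtain \(\delta^{2}/\tau\) rather than \(\delta^{2}/\tau^{1/2}\), i.e.\ \(\p{1-s}^{-1}\) in the theorem, which would degrade \(p\) from \(1/2\) to \(1\) in \cref{ass:cond-density} and weaken the downstream complexity results. The paper closes exactly this hole by applying \cref{eq:density-bound} a \emph{second} time, now to the density of \(Y_{s}\) given \(Y_{0}\) on \([0,s]\), thereby dominating the law of \(Y_{s}\) by a product Gaussian with variance \(\p{s/c_{0}}I_{d}\); after that domination \cref{thm:bounded-prob-lognormal} applies literally as stated. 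A secondary, smaller issue: for the extreme-set term one needs \(\prob{\norm{U}\geq R_{\delta}}=\order{\delta^{2}}\) with \(R_{\delta}=\abs{\log\delta}^{3/4}\), which requires genuinely Gaussian-type tails for \(Y_{s}\) (again supplied by the same density domination); polynomial or even exponential moments do not suffice, so ``standard moment bounds for SDEs'' is too weak a justification there.
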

\begin{proof}
  By the assumptions on the coefficients of \cref{eq:sde} and \cite[Chapter 9,
  Theorem 2]{friedman:partial} the density \(\Gamma\p{\cdot, 1; \xi, s}\) of \(Y_{1}\)
  given \(Y_{s}=\xi\) for \(s \leq 1\) exists and the upper bound
  \cref{eq:density-bound} holds for \(\abs{m} = 0\). Hence,
  \[
    \begin{aligned}
      \prob{\dist{\exp{Y_{1}}} \leq \delta \given \mathcal{F}_{s}}
      &= \prob{\dist{\exp{Y_{1}}} \leq \delta \given Y_{s}}\\
      &= \int_{\rset^{d}} \I{\dist{\exp {y}} \leq \delta} \, \Gamma\p{y, 1; Y_{s}, s} \D y\\
      &\leq \int_{\rset^{d}} \I{\dist{\exp {y}} \leq \delta} \, \p*{\frac{C_{0}}{\p{1-s}^{d/2}}
        \exp\p*{-c_{0} \frac{\abs{y-Y_{s}}^{2}}{1-s}}} \D y \\
      &= C_{0} \; \p{2 \pi/c_{0}}^{d/2} \; \prob{\dist{\exp\p{Z + Y_{s}}} \leq \delta \given
        Y_{s}},
    \end{aligned}
  \]
  where \(Z\) is a multivariate Normal random variable with zero mean and
  variance \(\p{\p{1-s}/c_{0}} I_{d}\) where \(I_{d}\) is the \(d \times d\)
  identity matrix. Similarly, using \cref{eq:density-bound} on the density of
  \(Y_{s}\),
  \begin{eqnarray*}
    \lefteqn{\E{\p{\prob{\dist{\exp\p{Z + Y_{s}}} \leq \delta \given
    Y_{s}}}^{2}}}
    \\& \leq C_{0} \; \p{2 \pi/c_{0}}^{d/2} \, \E{\p{\prob{\dist{\exp\p{Z + Y}} \leq \delta \given
    Y}}^{2}},
  \end{eqnarray*}
  where \(Y\) is a multivariate Normal random variable with variance
  \(\p{s/c_{0}} I_{d}\). Then noting
  \begin{eqnarray*}
    \lefteqn{\E{\p{\prob{\dist{\exp\p{Z + Y}} \leq \delta \given Y}}^{2}}} \\
    &\leq& n \sum_{j=1}^{n} \E{\p{\prob{\dist[\exp\!{J_{j}}]{\exp\p{Z + Y}} \leq
        \delta \given Y}}^{2}},
  \end{eqnarray*}
  and using \cref{thm:bounded-prob-lognormal} we obtain the result.
\end{proof}

 \section{Conclusion}%
In this article we have developed a new Monte Carlo estimator based on the
branching of approximate solution paths of the underlying stochastic
differential equation. Under certain assumptions, the new estimator, when
combined with MLMC, can be used to compute digital options with an improved
computational complexity. %
{Future directions for analysis could include
  extending \cref{thm:Zest-antithetic-rates} to the case of exponentials of
  solutions to uniformly elliptic SDEs, bounding higher moments of the error,
  particularly for the case of the antithetic estimators similar to
  \cref{sec:kurtosis} and extending the analysis to the case of solutions of
  locally elliptic SDEs to justify the numerical results in
  \cref{sec:antithetic}.}

There are also many applications that could benefit from the new estimator and the
branching ideas presented above.
First, instead of computing a single probability, the new estimator can be
used to compute multiple probabilities to reconstruct the cumulative (and
probability) density functions. This would provide an alternative approach to
the smoothing approach used in \cite{giles:smoothing}.

When MLMC is used together with the pathwise sensitivity approach (also known
as IPA, Infinitesimal Perturbation Analysis) to evaluate financial sensitivities
known collectively as Greeks, the loss of smoothness due to differentiation of
the payoff function affects the computational complexity \cite{burgos:greeks};
the branching estimator could significantly alleviate this.  Similarly, the
branching estimator could be used in combination with the finite difference
(or ``bumping'') approach to computing Greeks to counteract the increase in
the variance that results when decreasing the bump magnitude.

A final observation is that branching could also be used when the underlying
model is a parabolic stochastic PDE instead of an SDE.

\begin{appendix}
  \section{Bounding the kurtosis of the branching estimator}\label{sec:kurtosis}
\def\Q{Q}

The objective of this appendix is to prove that the kurtosis of the branching
estimator for the Euler-Maruyama and Milstein discretisations is
\(\order{h^{-\nu}}\) for any \(\nu>0\), for an elliptic SDE with a boundary set
\(K\) for which there exists a constant \(C\) such that
\[
  \prob{\dist{\X{1}}\leq\delta \given \mathcal F_{1-\tau}} \leq C\, \delta / \tau^{1/2},
\]
see also \cref{ass:cond-density,thm:cond-density-sde}. If we define \(\Q =
2^{\hat{\ell}}\), and number the particles as indicated in
\cref{fig:branching-estimator}, then noting that \(\abs{\Delta
  P_{\ell}^{\p{i}}}^n=\abs{\Delta P_{\ell}^{\p{i}}}\) for \(n{=}2,3,4\), the fourth
moment of the branching estimator is bounded by
\begin{equation}
\begin{aligned}
  \E{\p{\Zest_{\ell}}^{4}} &\leq \Q^{-4} \sum_{i,j,k,m=1}^{\Q}
                          \E{\abs{\Delta P_{\ell}^{\p{i}}}
                          \abs{\Delta P_{\ell}^{\p{j}}}
                          \abs{\Delta P_{\ell}^{\p{k}}}
                          \abs{\Delta P_{\ell}^{\p{m}}} }\\
                        &= 12 \, \Q^{-4} \sum_{i<j<k<m}
                          \E{\abs{\Delta P_{\ell}^{\p{i}}}
                          \abs{\Delta P_{\ell}^{\p{j}}}
                          \abs{\Delta P_{\ell}^{\p{k}}}
                          \abs{\Delta P_{\ell}^{\p{m}}} }\\
                        & ~ + 36 \, \Q^{-4} \sum_{i<j<k}
                          \E{\abs{\Delta P_{\ell}^{\p{i}}}
                          \abs{\Delta P_{\ell}^{\p{j}}}
                          \abs{\Delta P_{\ell}^{\p{k}}} } \\
                        & ~ + 14 \, \Q^{-4}\, \sum_{i<j}
                          \E{\abs{\Delta P_{\ell}^{\p{i}}}
                          \abs{\Delta P_{\ell}^{\p{j}}} } \\
                        & ~ + \Q^{-4}\ \sum_{i}
                          \E{\abs{\Delta P_{\ell}^{\p{i}}} }.
\end{aligned}
\label{eq:split}
\end{equation}

\begin{details}
  In more details: $\Q\p{\Q{-}1}(\Q{-}2)\p{\Q{-}2}$ quads all different,
  $6\Q\p{\Q{-}1}(\Q{-}2)$ with 3 different, $3\Q\p{\Q{-}1}(\Q{-}2)$ with 2
  pairs, $4\Q\p{\Q{-}1}(\Q{-}2)$ with 3 same, $\Q$ all the same
\end{details}

To begin with, we focus attention on the case with four distinct indices \(i{<}j{<}k{<}m\),
as this is the most common case.  There are 5 different branching patters among these,
but in each case through repeated use of
\[
\E{\I{\dist{\X^{\p{i}}{1}}\leq\delta} \,
    \I{\dist{\X^{\p{j}}{1}}\leq\delta} \given \mathcal F_{1-\tau_{i,j}}}
     \leq
C\, \delta\, \tau_{i,j}^{-1/2} \, \E{\I{\dist{\X^{\p{i}}{1}}\leq\delta}
       \given \mathcal F_{1-\tau_{i,j}}},
\]
where \(1{-}\tau_{i,j}\) is the time at which the particles
  \(\X^{\p{i}}{1}\) and \(\X^{\p{j}}{1}\) separate, we obtain
\[
  \E{\I{\dist{\X^{\p{i}}{1}}\leq\delta} \, \I{\dist{\X^{\p{j}}{1}}\leq\delta} \,
    \I{\dist{\X^{\p{k}}{1}}\leq\delta} \, \I{\dist{\X^{\p{m}}{1}}\leq\delta}} \leq C^4\, \delta^4\,
  \tau_{i,j}^{-1/2} \tau_{j,k}^{-1/2} \tau_{k,m}^{-1/2}.
\]

If we define the extreme set \(E\) to be those cases for which
\[
  \max_{n\in\{i,j,k,m\}} \max\br*{ \norm*{ \X^{\p{n}}{1}-\X^{\p{n}}[\ell]{1} } ,
    \norm*{\X^{\p{n}}{1}-\X^{\p{n}}[\ell-1]{1} } } \geq \delta,
\]
for some \(\delta{>}0\), then
\[
  \prob{E} \leq 4 \p*{ \prob*{\norm*{\X{1}-\X[\ell]{1}}\geq \delta}
    +\prob*{\norm*{\X{1}-\X[\ell-1]{1}}\geq\delta} } \lesssim h_\ell^{\beta q/2}\, \delta^{-q},
\]
due to the usual Markov inequality based on the \(q\)-th moment of the strong error being bounded.
We also have
\[
\abs{\Delta P_{\ell}^{\p{i}}}
\abs{\Delta P_{\ell}^{\p{j}}}
\abs{\Delta P_{\ell}^{\p{k}}}
\abs{\Delta P_{\ell}^{\p{m}}}
\leq
\I{\dist{\X^{\p{i}}{1}}\leq\delta}
       \I{\dist{\X^{\p{j}}{1}}\leq\delta}
       \I{\dist{\X^{\p{k}}{1}}\leq\delta}
       \I{\dist{\X^{\p{m}}{1}}\leq\delta}
+ \I{E}.
\]
So it follows that
\[
  \begin{aligned}
    \E{\abs{\Delta P_{\ell}^{\p{i}}}
    \abs{\Delta P_{\ell}^{\p{j}}}
    \abs{\Delta P_{\ell}^{\p{k}}}
    \abs{\Delta P_{\ell}^{\p{m}}}}
    ~ &\lesssim ~
    \delta^4 \tau_{i,j}^{-1/2} \tau_{j,k}^{-1/2} \tau_{k,m}^{-1/2}
    +
    h_\ell^{\beta q/2}\, \delta^{-q}\\
    ~ &\lesssim ~
    h_\ell^{2\beta q / \p{4+q}} \tau_{i,j}^{-1/2} \tau_{j,k}^{-1/2} \tau_{k,m}^{-1/2},
  \end{aligned}
\]
by choosing \(\delta^{2} \simeq h_\ell^{\beta q / \p{4+q}}\). For any fixed \(i\), provided \(\eta
< 2 \),
\[
 \sum_{j\neq i}\tau_{i,j}^{-1/2} = \tau_0^{-1/2} \sum_{\ell'=0}^{\hat{\ell}-1} 2^{\hat{\ell}-1-\ell'} 2^{\eta\ell'/2}
 \simeq 2^{\hat{\ell}} = \Q.
\]
This gives us
\begin{multline*}
  \Q^{-4} \sum_{i<j<k<m}
  \E{\abs{\Delta P_{\ell}^{\p{i}}}
    \abs{\Delta P_{\ell}^{\p{j}}}
    \abs{\Delta P_{\ell}^{\p{k}}}
    \abs{\Delta P_{\ell}^{\p{m}}} }
  \lesssim
  h_\ell^{2\beta q / \p{4+q}}  \Q^{-4} \sum_{i<j<k<m} \tau_{i,j}^{-1/2} \tau_{j,k}^{-1/2} \tau_{k,m}^{-1/2}
  \\
  \begin{aligned}
    &\lesssim h_\ell^{\beta q / \p{4+q}} \Q^{-4}  \sum_{i<j<k} \tau_{i,j}^{-1/2} \tau_{j,k}^{-1/2}
      \p*{ \sum_{m\neq k}\tau_{k,m}^{-1/2}}
    \\   &\lesssim h_\ell^{2\beta q / \p{4+q}} \Q^{-3}  \sum_{i<j<k} \tau_{i,j}^{-1/2} \tau_{j,k}^{-1/2}
    \\   &\lesssim h_\ell^{2\beta q / \p{4+q}} \Q^{-3}  \sum_{i<j} \tau_{i,j}^{-1/2}
           \p*{ \sum_{k\neq j}\tau_{j,k}^{-1/2} }
    \\   &\lesssim h_\ell^{2\beta q / \p{4+q}} \Q^{-2}  \sum_{i<j} \tau_{i,j}^{-1/2}
    \\   &\lesssim h_\ell^{2\beta q / \p{4+q}} \Q^{-2}  \sum_{i}
           \p*{ \sum_{j\neq i}\tau_{i,j}^{-1/2} }
    \\   &\lesssim h_\ell^{2\beta q / \p{4+q}}.
  \end{aligned}
\end{multline*}

Further analysis following the same approach proves that this is the
dominant contribution in \cref{eq:split}, and hence
\[
  \E{\p{\Zest_{\ell}}^{4}} \lesssim h_\ell^{2\beta q / \p{4+q}}.
\]
Similar analysis, or referring to
\cref{lem:estimator-workvar,thm:strong-conv-var-rates}, shows that the second
moment is bounded as follows
\[
  \E{\p{\Zest_{\ell}}^{2}} \lesssim h_\ell^{\beta q / \p{2+q}}.
\]
If we assume the second moment has a lower bound of \(\Order{h_\ell^{\beta}}\) then it follows that the kurtosis is bounded by
\[
  \text{Kurt}\sq{\Zest_{\ell}} \lesssim h_\ell^{- 8 \beta / \p{4+q} },
\]
for all \(q{>}2\), and therefore is \(\order{h_\ell^{-\nu}}\) for any \(\nu{>}0\).

The analysis can be extended to the exponential SDE case by first expanding the
extreme set \(E\) to include cases in which
\[
  \max_{n\in\{i,j,k,m\}} \norm{ \log\X^{\p{n}}{1} } \geq R_\delta,
\]
where \(R_\delta \defeq \abs{\log\delta}^{3/4} \) as defined previously in the proof of
Lemma 5.4. Equation \cref{eq:mike} in that proof gives us
\[
\E*{\I{\dist{\X{1}}\leq\delta}\I{\log\X{1}\leq R_\delta} \given \mathcal F_{1-\tau}}
\lesssim \exp\p{R_\delta} \, \delta / \tau^{1/2},
\]
so then we obtain
\detailed{for the non-extreme paths}
\[
  \begin{aligned}
    \E{\I{\dist{\X^{\p{i}}{1}}\leq\delta} \, \I{\dist{\X^{\p{j}}{1}}\leq\delta} \,
    \I{\dist{\X^{\p{k}}{1}}\leq\delta} \, \I{\dist{\X^{\p{m}}{1}}\leq\delta}\I{E^{c}}}
    \begin{details}
      &\lesssim \delta^4\, \exp\p{3 R_{\delta}} \, \tau_{i,j}^{-1/2} \tau_{j,k}^{-1/2} \tau_{k,m}^{-1/2}\\
  \end{details}
    & \lesssim \delta^{4-r}
      \, \tau_{i,j}^{-1/2} \tau_{j,k}^{-1/2} \tau_{k,m}^{-1/2},
  \end{aligned}
\]
for any \(r{>}0\) and \(\prob{E}\) remains \(\Order{h_\ell^{\beta q/2}\delta^{-q}} \) as
before when \(\delta\) is as previously chosen. Therefore the final conclusion
remains that the kurtosis is \(\order{h_\ell^{-\nu}}\) for any \(\nu{>}0\).

 \end{appendix}

\begin{acks}
We thank Soeren Wolfers for the helpful discussion regarding
\cref{thm:bounded-K-prob}. MBG gratefully acknowledges research funding from
the UK EPSRC (ICONIC programme grant EP/P020720/1), and the Hong Kong
Innovation and Technology Commission (InnoHK Project CIMDA).
 \end{acks}

\bibliographystyle{imsart-number}

\end{document}